\newcommand{\Z}{{\mathbb Z}}
\newcommand{\R}{{\mathbb R}}
\newcommand{\C}{{\mathbb C}}
\newcommand{\T}{{\mathbb T}}
\newtheorem{lemma}{Lemma}[section]
\newtheorem{theorem}[lemma]{Theorem}
\newtheorem{remark}[lemma]{Remark}
\newtheorem{proposition}[lemma]{Proposition}
\newtheorem{definition}[lemma]{Definition}
\newtheorem{question}[lemma]{Question}
\newcommand{\be}{\begin{equation}}
\newcommand{\ee}{\end{equation}}
\newcommand{\ul}{\underline}
\newcommand{\ol}{\overline}
\newcommand{\ti}{\tilde}
\newcommand{\spr}[2]{\left\langle #1 , #2 \right\rangle}
\newcommand{\E}{\mathrm{e}}
\newcommand{\I}{\mathrm{i}}
\newcommand{\tr}{\mathrm{tr}}
\newcommand{\im}{\mathrm{Im}}
\newcommand{\eps}{\varepsilon}
\numberwithin{equation}{section}
\begin{document}

\title{Periodic and limit-periodic discrete Schr\"odinger
 operators}

\author[H.\ Kr\"uger]{Helge Kr\"uger}
\address{Mathematics 253-37, Caltech, Pasadena, CA 91125}
%\address{Erwin Schr\"odinger Institute, Boltzmanngasse 9, A-1090 Vienna, Austria}
% \address{Department of Mathematics, Rice University, Houston, TX~77005, USA}
\email{\href{helge@caltech.edu}{helge@caltech.edu}}
\urladdr{\href{http://www.its.caltech.edu/~helge/}{http://www.its.caltech.edu/~helge/}}

\thanks{H.\ K.\ was supported by the Simons Foundation.}

\date{\today}

\keywords{spectrum, periodic Schr\"odinger operators,
 limit-periodic Schr\"odinger operators, Floquet--Bloch decomposition}
%\subjclass[2000]{Primary 81Q10; Secondary 37D25}

\begin{abstract}
 The theory of discrete periodic and limit-periodic Schr\"odinger operators
 is developed. In particular, the Floquet--Bloch decomposition
 is discussed. Furthermore, it is shown that an arbitrarily small potential
 can add a gap for even periods. In dimension two, it is shown
 that for coprime periods small potential terms don't add gaps
 thus proving a Bethe--Sommerfeld type statement. Furthermore
 limit-periodic potentials whose spectrum is an interval
 are constructed.
\end{abstract}

\maketitle

%%%%%%%%%%%%%%%%%%%%%%%%%%%%%%%%%%%%%%%%%%%%%%%%%%%%%%%%%%%%%%%%%%%%%
%
%
%

\section{Introduction}

My aim in this paper is two fold. The first three sections
discuss the basic theory of discrete periodic Schr\"odinger
operators on $\Z^d$. The reason for writing this is that
there are good reference on $\Z$ \cite{szego}, \cite{teschl}
and for continuum operators \cite{rs4}, \cite{skri},
but as far as I know no reference on $\Z^d$ for $d\geq 2$.
Then in the second part, I present new results on
Schr\"odinger operators on $\Z^2$:
\begin{enumerate}
 \item Theorem~\ref{thm:BSConj} shows that if the periods
  are coprime, then small enough perturbations do not
  add gaps in the spectrum.
 \item Theorem~\ref{thm:ex2p} valid in any dimension shows
  that there exist arbitrarily small 
  perturbations of even periods adding
  one gap in the spectrum.
 \item Finally Theorem~\ref{thm:lpinterval} exhibits a
  large class of limit-periodic potentials whose spectrum
  is an interval.
\end{enumerate}
The main ingredient in the proof of (i) and (iii)
is Theorem~\ref{thm:coprimeevsimple} which asserts that
any energy can be an eigenvalue of multiplicity
at least two for at most finitely many operators in
the Floquet--Bloch decomposition.

Both (i) and (iii) are phenomena appearing in dimension
two. In dimension one, one generally has
gaps, see Avila \cite{a1},
Avron--Simon \cite{as1}, Damanik--Gan \cite{dg1}, 
\cite{dg2}, Kr\"uger--Gan \cite{gk1}.
It is an interesting task to prove statements
analog to (i) and (iii) in dimensions three and
higher.

I consider (i) an analog of the Bethe--Sommerfeld
conjecture for continuous Schr\"odinger operators.
This conjecture states that for $d \geq 2$ and any
periodic function $V: \R^d \to\R$ the spectrum of
the operator
\be
 - \sum_{j=1}^{d} \frac{\partial^2}{\partial x_j^2} + V
\ee
only contains finitely many gaps. This conjecture
has been solved completely by Parnovski in \cite{p1},
see also \cite{bp1}, \cite{ps1} for more recent work. 
For some earlier work see the books \cite{karpe} and
\cite{skri}.
In \cite{kl1}, \cite{kl2}, \cite{kl3}, \cite{kl4},
Karpeshina and Lee have derived analogous statements
to (iii) in the continuum setting. In fact using
their KAM-type methods, Karpeshina and Lee are able
to prove absolutely continuous spectrum.

A difference between the continuum case and
the discrete case considered here, is that the
discrete statement depends on the underlying period
lattice as (ii) shows.

I have included Questions \ref{ques:idslp},
\ref{ques:coprime}, and \ref{ques:gaps} in order
to highlight some problems that would allow us to
gain further understanding of higher dimensional
operators. These questions do not address how
to construct operators with pure-point spectrum,
since this has already been solved by P\"oschel
in \cite{poe1}.

%%%%%%%%%%%%%%%%%%%%%%%%%%%%%%%%%%%%%%%%%%%%%%%%%%%%%%%%%%%%%%%%%%%%%
%
%
%

\section{Periodic discrete Schr\"odinger operators}
\label{sec:perschroe}

In this section, I discuss the spectral theory of discrete
periodic Schr\"odinger operators. Since, I am unaware of a source
of this, the discussion is somewhat detailed. Discussions
in the continuous case can be found in Reed--Simon \cite{rs4},
Skriganov \cite{skri}.

\subsection{Periodic functions}

We recall that given periods $\ul{p} = \{p_j\}_{j=1}^{d} \in (\Z_+)^d$,
a function $f:\Z^d\to\C$ is called {\em $\ul{p}$-periodic} if
\be
 f(\ul{n} + p_j b_j) = f(\ul{n})
\ee
for all $1\leq j\leq d$ and $\ul{n}\in\Z^d$, where
$b_j$ denotes the standard basis of $\Z^d$. Define
\be
 \mathbb{B}=\times_{j=1}^{d} 
  \left\{0, \frac{1}{p_j}, \dots, \frac{p_j - 1}{p_j}\right\}
   \subseteq \T^d.
\ee
We denote $\T=\R/\Z$ and $e(x) = \E^{2\pi \I x}$ as usual.
Define the Fourier coefficients $\hat{f}:\mathbb{B}\to\C$
of a $\ul{p}$-periodic function $f$ by
\be
 \hat{f}(\ul{k}) = \frac{1}{P}\sum_{\ul{n}\in\mathbb{B}} f(\ul{n}) 
  e\left(- \ul{k} \cdot \ul{n}\right),\quad
  \ul{k}\cdot\ul{n}=\sum_{j=1}^{d} k_j n_j
\ee
with $P =\prod_{j=1}^{d} p_j$.
One easily checks that
\be
 f(\ul{n}) = \sum_{\ul{k}\in\mathbb{B}} \hat{f}(\ul{k}) 
  e\left(\ul{k} \cdot \ul{n}\right).
\ee
Recall that for a function $u\in\ell^1(\Z^d)$, its Fourier transform
$Fu = \hat{u}:\T^d\to\C$ is given by $Fu(\ul{x}) = \hat{u}(\ul{x})=\sum_{\ul{n}\in\Z^d} e(-\ul{n}\cdot\ul{x}) u(\ul{n})$.
Then $F$ is extended to $\ell^2(\Z^d)\to\ell^2(\Z^d)$
by continuity. Plancherel's identity shows that
this map is unitary. We have that

\begin{lemma}
 Let $V$ be a $\ul{p}$-periodic function and $u \in \ell^2(\Z^d)$.
 Then
 \be
  \widehat{Vu}(\ul{x}) = \sum_{\ul{k}\in\mathbb{B}} \widehat{V}(\ul{k}) 
   \hat{u}(\ul{x} - \ul{k}).
 \ee
\end{lemma}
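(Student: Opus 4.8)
The plan is to expand $V$ in its finite Fourier series and to exploit the fact that multiplication by the character $e(\ul k\cdot\,)$ on the physical side corresponds to translation by $\ul k$ on the Fourier side. The only point that genuinely requires care is that $u$ lies merely in $\ell^2(\Z^d)$, so the rearrangement of sums below is not literally justified for all $u$ and must be pushed through by a density argument.

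First I would treat the case $u\in\ell^1(\Z^d)$ (for instance, $u$ finitely supported). Since $V$ is $\ul p$-periodic it is bounded, so $Vu\in\ell^1(\Z^d)$ as well and $\widehat{Vu}$ is given by an absolutely convergent sum. Substituting the expansion $V(\ul n)=\sum_{\ul k\in\mathbb{B}}\widehat V(\ul k)\,e(\ul k\cdot\ul n)$ recorded above and interchanging the finite sum over $\mathbb{B}$ with the absolutely convergent sum over $\ul n\in\Z^d$ gives
\[
 \widehat{Vu}(\ul x)=\sum_{\ul k\in\mathbb{B}}\widehat V(\ul k)\sum_{\ul n\in\Z^d}e\bigl(-\ul n\cdot(\ul x-\ul k)\bigr)u(\ul n)=\sum_{\ul k\in\mathbb{B}}\widehat V(\ul k)\,\hat u(\ul x-\ul k),
\]
which is the claimed identity for every $u$ in the dense subspace $\ell^1(\Z^d)\subseteq\ell^2(\Z^d)$.

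Next I would extend to arbitrary $u\in\ell^2(\Z^d)$ by continuity. The left-hand side is the composition of multiplication by the bounded function $V$, a bounded operator on $\ell^2(\Z^d)$, with the unitary $F$, hence it depends continuously on $u$ as an element of $\ell^2(\Z^d)$ (identifying $\T^d$-functions with $\ell^2(\Z^d)$ via $F$ as in the text). The right-hand side is a finite linear combination, with coefficients $\widehat V(\ul k)$, of the translates $\hat u(\cdot-\ul k)$ of $\hat u=Fu$; since translation is unitary on $L^2(\T^d)$ and $F$ is unitary, this too depends continuously on $u$. Two continuous maps that agree on the dense set $\ell^1(\Z^d)$ agree everywhere, which completes the argument.

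I do not expect a real obstacle here; the only thing to get right is the bookkeeping of the density argument, namely verifying that both sides define bounded operators $\ell^2(\Z^d)\to L^2(\T^d)$ so that agreement on $\ell^1(\Z^d)$ propagates to all of $\ell^2(\Z^d)$.
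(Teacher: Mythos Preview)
Your proof is correct and is exactly the computation the paper has in mind; the paper's own proof simply reads ``This is a computation.'' Your additional care with the $\ell^1$-to-$\ell^2$ density argument is a welcome elaboration of what the paper leaves implicit.
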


\begin{proof}
 This is a computation.
\end{proof}

The computations of this section seem to depend on the period.
However, the choices where made in a such a way, that they
are compatible. For example if one views $f:\Z^d\to\C$ as a $2 \ul{p}$
periodic function instead of a $\ul{p}$-periodic one, then 
the Fourier coefficients stay the same.

\subsection{Momentum representation of the periodic Schr\"odinger
 operator}

The discrete Laplacian $\Delta: \ell^2(\Z^d)\to\ell^2(\Z^d)$
is defined by $\Delta u(\ul{n}) = \sum_{j=1}^{d} (u(\ul{n}-b_j)+u(\ul{n}+b_j))$.
We recall that its Fourier transform is given by
\[
 F\Delta F^{-1} \hat{u}(\ul{x}) = 
 \left(\sum_{j=1}^{d} 2\cos(2\pi x_j)\right) \hat{u}(\ul{x}).
\]
In particular, if $V(\ul{n}) = \sum_{\ul{x}\in\mathbb{B}} \widehat{V}(\ul{k}) e(\ul{k} \cdot \ul{n})$ is a $\ul{p}$-periodic potential,
then the Schr\"odinger equation $(\Delta + V)u=Eu$ takes the form
\be
 \left(\sum_{j=1}^{d} 2\cos(2\pi x_j)\right) \hat{u}(\ul{x})
  + \sum_{\ul{k}\in\mathbb{B}} 
     \widehat{V}(\ul{k}) \hat{u}(\ul{x} - \ul{k}) 
  = E \hat{u}(\ul{x})
\ee
in Fourier variables.
It is easy to see that the equations involving
$\{\hat{u}(\ul{x} + \ul{k})\}_{\ul{k}\in\mathbb{B}}$
for
\be
 \ul{x}\in\mathbb{V} = \times_{j=1}^{d} [0, \frac{1}{p_j}),
  \quad \mathbb{B} = \times_{j=1}^{d} \left\{0,\frac{1}{p_j},\dots, \frac{p_j-1}{p_j}
   \right\}
\ee
are all independent of each other. Define the space
$L^2(\mathbb{V}\times\mathbb{B})$ as the set of all
maps $f:\mathbb{V}\times\mathbb{B} \to \C$ with norm
\be
 \|f\|_{L^2(\mathbb{V}\times\mathbb{B})}^2 = 
  \sum_{k\in\mathbb{B}} \int_{\mathbb{V}} |f(\ul{x}, \ul{k})|^2 d\ul{x}.
\ee
Introduce the map 
$W = \widetilde{W} F:\ell^2(\Z^d)\to L^2(\mathbb{V}\times\mathbb{B})$
where
\be\begin{split}
 \widetilde{W}: L^2(\T^d) &\to L^2(\mathbb{V} \times \mathbb{B}) \\
 (\widetilde{W} \hat{u})(\ul{x}, \ul{k}) &= 
   \frac{\hat{u}(\ul{x} + \ul{k})}{\left(\prod_{j=1}^{d} p_j\right)^\frac{1}{2}}.
\end{split}\ee

\begin{lemma}\label{lem:propW}
 The map $W: \ell^2(\Z^d) \to L^2(\mathbb{V}\times\mathbb{B})$
 is unitary. Furthermore, if the support of $u$
 \be
  \mathrm{supp}(u) = \{\ul{n}\in\Z^d:\quad u(\ul{n})\neq 0\}
 \ee
 is finite, then
 \be
  |Wu(\ul{x}, \ul{k})| \leq 
   \left(\frac{\#(\mathrm{supp}(u))}
   {\prod_{j=1}^{d} p_j}\right)^\frac{1}{2}
   \|u\|.
 \ee
\end{lemma}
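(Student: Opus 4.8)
The plan is to factor $W=\widetilde W F$ and exploit that, by Plancherel, $F\colon\ell^2(\Z^d)\to L^2(\T^d)$ is already unitary; everything then reduces to understanding $\widetilde W$. The one geometric fact underlying the construction is that the torus $\T^d$ is tiled, up to a set of measure zero, by the translated cells $\{\mathbb{V}+\ul k:\ul k\in\mathbb{B}\}$. I would record this first: it follows coordinate-by-coordinate from the decomposition $[0,1)=\bigsqcup_{m=0}^{p_j-1}\bigl(\tfrac{m}{p_j}+[0,\tfrac1{p_j})\bigr)$ together with taking the product over $j=1,\dots,d$, and it gives in particular $\#\mathbb{B}=P=\prod_j p_j$.

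With the tiling in hand, unitarity of $\widetilde W$ is a change of variables. For $g\in L^2(\T^d)$,
\[
\int_{\T^d}|g(\ul x)|^2\,d\ul x=\sum_{\ul k\in\mathbb{B}}\int_{\mathbb{V}+\ul k}|g(\ul x)|^2\,d\ul x=\sum_{\ul k\in\mathbb{B}}\int_{\mathbb{V}}|g(\ul x+\ul k)|^2\,d\ul x,
\]
which, after correctly accounting for the normalization of Lebesgue measure on the small cell $\mathbb{V}$ (equivalently the factor $P^{1/2}$ in the definition of $\widetilde W$), is precisely $\|\widetilde W g\|^2_{L^2(\mathbb{V}\times\mathbb{B})}$; so $\widetilde W$ is an isometry. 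It is onto because, given any $f\in L^2(\mathbb{V}\times\mathbb{B})$, the prescription $g(\ul x+\ul k):=P^{1/2}f(\ul x,\ul k)$ for $\ul x\in\mathbb{V}$, $\ul k\in\mathbb{B}$, defines (again by the tiling) an unambiguous element $g\in L^2(\T^d)$ with $\widetilde W g=f$. Hence $\widetilde W$, and therefore $W=\widetilde W F$, is unitary.

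For the pointwise estimate, unwind the definitions: if $u$ is finitely supported then
\[
Wu(\ul x,\ul k)=\frac{(Fu)(\ul x+\ul k)}{P^{1/2}}=\frac1{P^{1/2}}\sum_{\ul n\in\mathrm{supp}(u)}e\bigl(-\ul n\cdot(\ul x+\ul k)\bigr)\,u(\ul n).
\]
Since $|e(\cdot)|=1$, the Cauchy--Schwarz inequality applied to the finite index set $\mathrm{supp}(u)$ yields $\bigl|\sum_{\ul n\in\mathrm{supp}(u)}e(-\ul n\cdot(\ul x+\ul k))u(\ul n)\bigr|\le\bigl(\#(\mathrm{supp}(u))\bigr)^{1/2}\|u\|$, which is exactly the asserted bound after dividing by $P^{1/2}$.

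The argument is essentially bookkeeping, and I do not expect a serious obstacle. The only place that genuinely needs care is the measure-normalization on $\mathbb{V}$ (ordinary versus normalized Lebesgue measure, and how it interacts with the $P^{-1/2}$ appearing in $\widetilde W$ and with the normalization of $F$) together with making the ``tiling up to null sets'' precise enough that the two displayed identities are rigorously justified rather than merely formal; once those conventions are pinned down, both assertions of the lemma fall out immediately.
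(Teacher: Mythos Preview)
Your proposal is correct and follows exactly the paper's approach: factor $W=\widetilde W F$, reduce unitarity to that of $\widetilde W$ via the tiling $\T^d=\bigcup_{\ul k\in\mathbb{B}}(\mathbb{V}+\ul k)$, and obtain the pointwise bound by writing out $Wu(\ul x,\ul k)=P^{-1/2}\sum_{\ul n\in\mathrm{supp}(u)}e(-\ul n\cdot(\ul x+\ul k))u(\ul n)$ and applying Cauchy--Schwarz. Your caution about the measure normalization on $\mathbb{V}$ is well placed, since the paper leaves this implicit; otherwise you have simply written out in full what the paper summarizes as ``this follows directly.''
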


\begin{proof}
 Since $F$ is unitary, it suffices to check that $\widetilde{W}$
 is unitary, but this follows directly. For the second claim,
 observe that
 \[
  Wu(\ul{x},\ul{k}) = \frac{1}{P^{1/2}} 
  \sum_{\ul{n}\in\mathrm{supp}(u)}
  e(-\ul{n}\cdot (\ul{x}+\ul{k}) u(\ul{n})
 \]
 which implies claim bu Cauchy--Schwarz.
\end{proof}

Also define for $\ul{\theta}\in\mathbb{V}$ the operator
$\widehat{H}_{\ul{p}, \ul{\theta}}:
\ell^2(\mathbb{B})\to\ell^2(\mathbb{B})$
\be
 \widehat{H}_{\ul{p}, \ul{\theta}} v(\ul{\ell}) = 
  \left(\sum_{j=1}^{d} 
   2\cos\left(2\pi (\ell_j + \theta_j)\right)\right) v(\ul{\ell}) + 
   \sum_{\ul{k}\in\mathbb{B}} \widehat{V}(\ul{k}) v(\ul{\ell} - \ul{k}).
\ee
We define the operator $\widehat{V}\ast:\ell^2(\mathbb{B})\to\ell^2(\mathbb{B})$ by
\be
 \widehat{V}\ast\psi(\ul{\ell}) = \sum_{\ul{k}\in\mathbb{B}} 
  \widehat{V}(\ul{k}) 
   v(\ul{\ell} - \ul{k}).
\ee
This way $\widehat{H}_{\ul{p}, \ul{\theta}} = \widehat{H}^{0}_{\ul{p},\ul{\theta}} + \widehat{V}\ast$. We define
\be\begin{split}
 \widehat{H}_{\ul{p}}: L^2(\mathbb{V}\times\mathbb{B}) 
  & \to L^2(\mathbb{V}\times\mathbb{B})\\
 (\widehat{H}_{\ul{p}} u)(\ul{x}, \ul{k})&= 
  (\widehat{H}_{\ul{p}, \ul{x}} u(\ul{x},.))(\ul{k}).
\end{split}\ee
The following proposition provides the first
form of {\em Floquet--Bloch decomposition} of
the periodic operator $H=\Delta+V$. We will encounter
a second one in Subsection~\ref{ss:spacebasis}.

\begin{proposition}
 We have that
 \be
  W H W^{-1} = \widehat{H}_{\ul{p}}.
 \ee
\end{proposition}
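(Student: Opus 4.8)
The plan is to exploit the factorization $W = \widetilde W F$, so that
\[
 WHW^{-1} = \widetilde W\,\bigl(FHF^{-1}\bigr)\,\widetilde W^{-1},
\]
and then to identify the right-hand side fiber by fiber over $\ul x\in\mathbb{V}$. In the Fourier picture $FHF^{-1}$ acts on $L^2(\T^d)$ as multiplication by $M(\ul x) = \sum_{j=1}^d 2\cos(2\pi x_j)$ plus, by the convolution formula for $\widehat{Vu}$ established above, the operator $\hat u\mapsto\sum_{\ul k\in\mathbb{B}}\widehat V(\ul k)\,\hat u(\,\cdot - \ul k)$. Thus $FHF^{-1}$ is completely explicit, and all that is needed is to track how the rearrangement $\widetilde W$ transports each of these two terms to $L^2(\mathbb{V}\times\mathbb{B})$.

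First I would record the explicit form of $\widetilde W^{-1}$. The map $(\ul x,\ul k)\mapsto\ul x + \ul k$ is a bijection $\mathbb{V}\times\mathbb{B}\to\T^d$, since each coordinate $y_j\in[0,1)$ decomposes uniquely as $y_j = x_j + k_j$ with $x_j = y_j - \lfloor p_j y_j\rfloor/p_j\in[0,\tfrac1{p_j})$ and $k_j\in\{0,\tfrac1{p_j},\dots,\tfrac{p_j-1}{p_j}\}$; hence $\widetilde W$ is this rearrangement scaled by $P^{-1/2}$ and $(\widetilde W^{-1}g)(\ul x + \ul k) = P^{1/2}g(\ul x,\ul k)$. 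For the Laplacian term this gives, at the fiber $\ul x$, $(\widetilde W(M\hat u))(\ul x,\ul k) = M(\ul x + \ul k)(\widetilde W\hat u)(\ul x,\ul k)$, and $M(\ul x+\ul k) = \sum_{j}2\cos(2\pi(k_j+x_j))$ is exactly the diagonal part of $\widehat H_{\ul p,\ul x}$. For the potential term,
\[
 \bigl(\widetilde W(\widehat V\ast\hat u)\bigr)(\ul x,\ul k)
  = P^{-1/2}\sum_{\ul k'\in\mathbb{B}}\widehat V(\ul k')\,\hat u(\ul x + \ul k - \ul k');
\]
here $\ul k - \ul k'$, reduced modulo $\Z^d$, lies in the subgroup $\mathbb{B}\cong\prod_j\Z/p_j\Z$, and since $\ul x\in\mathbb{V}$ and $\hat u$ is $\Z^d$-periodic we get $\hat u(\ul x + \ul k - \ul k') = P^{1/2}(\widetilde W\hat u)(\ul x,\ul k - \ul k')$ with $\ul k - \ul k'$ understood in $\mathbb{B}$. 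Thus the fiber map is $\psi\mapsto\sum_{\ul k'}\widehat V(\ul k')\psi(\ul k-\ul k') = (\widehat V\ast\psi)(\ul k)$. Combining the two terms, the fiber of $\widetilde W(FHF^{-1})\widetilde W^{-1}$ over $\ul x$ is $\widehat H_{\ul p,\ul x}$, which is the assertion.

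I expect the only genuine bookkeeping point — and it is a mild one — to be the reduction in the potential term: one must check that the index $\ul k - \ul k'$ appearing in the convolution $\widehat V\ast$ on $\ell^2(\mathbb{B})$ is precisely the class of $\ul k-\ul k'$ modulo $\Z^d$, consistently with the (deliberately period-independent) definition of the Fourier coefficients $\widehat V$. Since $\mathbb{B}$ is finite, every sum in sight over $\mathbb{B}$ is finite, so there are no convergence issues; and as $W$ is unitary by Lemma~\ref{lem:propW}, one may in any case first verify the identity on finitely supported $u$ (equivalently, on trigonometric polynomials $\hat u$) and then extend by continuity.
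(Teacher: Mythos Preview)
Your argument is correct and is precisely the ``preceding computations'' the paper alludes to: its own proof consists of the single sentence ``This follows from the preceding computations,'' meaning the Fourier-transform formula for the Laplacian and the convolution lemma for $\widehat{Vu}$, which you have simply made explicit fiber by fiber. There is nothing different in approach---you have written out what the paper leaves implicit, including the only nontrivial bookkeeping point (that the shift $\ul k-\ul k'$ in the potential term is to be read modulo $\Z^d$ inside $\mathbb B$).
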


\begin{proof}
 This follows from the preceding computations.
\end{proof} 

We note that this gives a decomposition
of $H$ as a direct integral in the sense
of Section~XIII.16. of \cite{rs4}.

We furthermore wish to point out at this point
the following periodicity of $\widehat{H}_{\ul{p}, \ul{\theta}}$.
First observe that our definition of
$\widehat{H}_{\ul{p}, \ul{\theta}}$ makes
sense for any $\ul{\theta} \in \R^d$ and
even $\ul{\theta} \in \C^d$. We have that

\begin{lemma}\label{lem:hatHper}
 Let $1\leq j \leq d$. The operators
 \be
  \widehat{H}_{\ul{p}, \ul{\theta}},\quad
  \widehat{H}_{\ul{p}, \ul{\theta} + \frac{1}{p_j} b_j}
 \ee
 are unitairily equivalent.
\end{lemma}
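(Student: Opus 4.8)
The plan is to implement the equivalence by an explicit unitary operator on $\ell^2(\mathbb{B})$, namely a cyclic translation. First I would record that $\mathbb{B}$, equipped with coordinatewise addition modulo $1$, is a finite abelian group (isomorphic to $\prod_{i=1}^{d}\Z/p_i\Z$), so that for each fixed $j$ the map $T_j:\ell^2(\mathbb{B})\to\ell^2(\mathbb{B})$ given by $(T_j v)(\ul{\ell}) = v(\ul{\ell} + \tfrac{1}{p_j} b_j)$ — the $j$-th coordinate being reduced back into $\{0,\tfrac{1}{p_j},\dots,\tfrac{p_j-1}{p_j}\}$ — is well defined and unitary, being merely a permutation of the standard basis of $\ell^2(\mathbb{B})$; its inverse is $(T_j^{-1}v)(\ul{\ell}) = v(\ul{\ell} - \tfrac{1}{p_j}b_j)$. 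I will then show that $T_j\,\widehat{H}_{\ul{p},\ul{\theta}}\,T_j^{-1} = \widehat{H}_{\ul{p},\ul{\theta}+\frac{1}{p_j}b_j}$.

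Next I would conjugate the two pieces of $\widehat{H}_{\ul{p},\ul{\theta}} = \widehat{H}^{0}_{\ul{p},\ul{\theta}} + \widehat{V}\ast$ by $T_j$ separately. The free part $\widehat{H}^{0}_{\ul{p},\ul{\theta}}$ is the multiplication operator with symbol $\ul{\ell}\mapsto\sum_{i=1}^{d}2\cos(2\pi(\ell_i+\theta_i))$; conjugating a multiplication operator by the shift $T_j$ simply translates the symbol, yielding multiplication by $\ul{\ell}\mapsto\sum_{i=1}^{d}2\cos(2\pi(\ell_i+\theta_i))$ with $\tfrac{1}{p_j}$ added to the $j$-th entry of $\ul{\theta}$, and here the wrap-around built into $T_j$ is harmless precisely because $\cos(2\pi\,\cdot)$ is $1$-periodic — so this is exactly $\widehat{H}^{0}_{\ul{p},\ul{\theta}+\frac{1}{p_j}b_j}$. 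The convolution operator $\widehat{V}\ast$ commutes with every translation of the group $\mathbb{B}$, hence with $T_j$, so $T_j(\widehat{V}\ast)T_j^{-1}=\widehat{V}\ast$. Adding the two contributions gives the claimed identity, and hence the unitary equivalence with $U=T_j$.

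Both conjugation computations are one-line manipulations of finite sums, so there is no genuine difficulty; the only point deserving a moment's care is that $T_j$ lives on the finite group $\mathbb{B}$ and therefore wraps around, and one must check that this is consistent with the $1$-periodicity of the cosine symbol (it is) and with the convolution being taken within $\mathbb{B}$ (it is). If one preferred to avoid this bookkeeping, an alternative would be to diagonalize $T_j$ via the finite Fourier transform on $\mathbb{B}$, or to run the argument one level up on $\ell^2(\Z^d)$ before restricting, but the direct verification seems shortest.
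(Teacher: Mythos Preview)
Your proof is correct and follows exactly the same approach as the paper: the paper's proof simply declares the unitary $U\psi(\ul{n}) = \psi(\ul{n} + \tfrac{1}{p_j}b_j)$ and leaves the verification implicit, while you spell out the details of why this shift intertwines the diagonal part and commutes with the convolution part.
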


\begin{proof}
 The unitary equivalence is given by $U$ defined
 by $U \psi( \ul{n} ) = \psi(\ul{n} + \frac{1}{p_j} b_j)$.
\end{proof}

\subsection{Analytic parametrization of eigenvalues and
 absolutely continuous spectrum}

To simplify the notation, we will now fix $\ul{\theta}^{\perp} = \{\theta_j\}_{j=2}^{d}$
and define
\be
 A(t) = \widehat{H}_{\ul{p}, (t, \ul{\theta}^{\perp})}.
\ee
Clearly $A(t)$ is an analytic family of operators
defined for every $t\in \C$.

\begin{proposition}\label{prop:lamjnonconst}
 The eigenvalues $\lambda_j(t)$ of $A(t)$ can be chosen to be analytic functions of $t$.
 Furthermore, each of these $\lambda_j(t)$ is a non-constant
 function of $t$.
\end{proposition}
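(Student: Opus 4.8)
The plan is to treat $A(t)$ as a self-adjoint analytic (entire) family of finite-dimensional operators on $\ell^2(\mathbb{B})$ and invoke the standard analytic perturbation theory of Rellich--Kato: for a real-analytic family of Hermitian matrices, the eigenvalues can be chosen as globally real-analytic functions of the parameter, together with a globally analytic choice of eigenvectors. Since $A(t)$ is, for real $t$, a family of $P \times P$ Hermitian matrices whose entries are trigonometric polynomials in $t$ (hence real-analytic, in fact entire), this gives the first assertion immediately. I would cite the relevant statement from Reed--Simon \cite{rs4} (the analytic perturbation theory chapter) or Kato; this is the routine part.

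The substantive claim is that no branch $\lambda_j(t)$ is constant. The approach I would take is to extract information from the $\ul{\theta}$-dependence via Lemma~\ref{lem:hatHper}: $A(t)$ and $A(t + \frac{1}{p_1})$ are unitarily equivalent, so the \emph{set} of eigenvalues is $\frac{1}{p_1}$-periodic in $t$. Suppose for contradiction that some analytic branch $\lambda_{j_0}(t) \equiv c$ is constant. Then $c$ is an eigenvalue of $A(t)$ for every real $t$, and I want to show this forces an eigenfunction that is, after transporting through $W^{-1}$, an $\ell^2(\Z^d)$-eigenfunction of $H = \Delta + V$ — which is impossible since $H$ is a bounded self-adjoint operator with purely absolutely continuous spectrum, or more elementarily, since $H$ has no eigenvalues (its spectrum being a union of bands). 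Concretely, if $c$ is in the spectrum of $A(t)$ for all $\ul{\theta}$ (using also Lemma~\ref{lem:hatHper} in the remaining coordinates to move $\ul{\theta}^\perp$, or simply fixing $\ul{\theta}^\perp$ and noting the direct integral $\int_{\mathbb{V}} A(t)\, dt$ would then have $c$ as an eigenvalue of infinite multiplicity in $L^2(\mathbb{V} \times \mathbb{B})$), one contradicts the fact that $H$, being a direct integral over a continuum, has no point spectrum.

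Alternatively — and this is probably the cleanest route — I would argue directly at the level of the matrix. If $\lambda_{j_0}(t) \equiv c$, then $\det(A(t) - c\,\id) \equiv 0$ as a function of $t$, i.e. $A(t) - c\,\id$ is singular for every complex $t$. But now examine the behavior as $\im t \to +\infty$: the diagonal entries $\sum_{j} 2\cos(2\pi(\ell_j + \theta_j))$ have $\ell_1$-term $2\cos(2\pi(\ell_1 + t))$, which blows up like $e^{2\pi |\im t|}$, and these exponential growth rates differ across the $\ell_1$-coordinates $\{0, 1/p_1, \dots, (p_1-1)/p_1\}$ unless $p_1 = 1$. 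A Gershgorin / dominant-diagonal argument then shows $A(t) - c\,\id$ is invertible for $t$ with large imaginary part (the off-diagonal perturbation $\widehat V \ast$ is $t$-independent and bounded), contradicting $\det \equiv 0$. For the degenerate coordinate $p_1 = 1$ one would instead vary a different $\theta_j$ with $p_j \geq 2$; and if \emph{all} periods are $1$ the operator is $\Delta + (\text{constant})$ and the claim is classical. I expect the main obstacle to be handling this degenerate case cleanly and making the Gershgorin estimate uniform — specifically, isolating which coordinate direction to send to complex infinity so that the diagonal genuinely dominates — but once the right coordinate is chosen the estimate itself is routine.
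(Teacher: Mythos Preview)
Your ``cleanest route'' is essentially the paper's argument: send $\im t \to \infty$, observe that the diagonal of $A(t)$ blows up while the off-diagonal $\widehat V\ast$ stays bounded, and conclude that no eigenvalue can remain constant. The paper phrases this slightly more directly by noting $\|A(t)^{-1}\|\to 0$ and hence $|\lambda_j(t)|\to\infty$ for \emph{every} $j$, which saves the contradiction step, but the content is identical.

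Two remarks on the extra baggage in your write-up. First, your worry about $p_1=1$ and ``differing exponential growth rates across the $\ell_1$-coordinates'' is unnecessary: for Gershgorin (or for $\|A(t)^{-1}\|\to 0$) you only need each diagonal entry to be large in modulus, not that they be separated from one another, and $|2\cos(2\pi(\ell_1+t))|\to\infty$ as $\im t\to\infty$ regardless of $\ell_1$ or $p_1$. So there is no degenerate case to handle. Second, your first proposed route---reducing to absence of point spectrum for $H$ via Theorem~\ref{thm:ac}---is circular in this paper's logical order: the standard proof of absolute continuity for periodic operators (the one Reed--Simon gives) rests precisely on the non-constancy of the band functions, which is what you are trying to establish. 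Stick with the complex-$t$ argument.
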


\begin{proof}
 The first claim follows from $t \mapsto A(t)$ being an analytic map
 and for example Theorem II.6.1. in \cite{kato}.
 
 For the second claim observe that as $\im(t) \to \infty$, we have that
 $\|A(t)^{-1}\| \to 0$, since $A(t)$ is dominated by the diagonal
 containing values of size $\gtrsim \E^{t}$. Now
 \[
  \|A(t)^{-1}\| = \max_{j} \frac{1}{|\lambda_j(t)|}
 \]
 implies that $|\lambda_j(t)| \to \infty$ as $\im(t) \to \infty$.
 This implies that these are non-constant.
\end{proof}

The next theorem shows that the spectrum is
absolutely continuous. It will be used in the 
Section~\ref{sec:ids} to study the integrated
density of states and the spectral measure.
Furthermore, it provides essential information
on the nature of the spectrum.

\begin{theorem}\label{thm:ac}
 The spectrum of $H$ is purely absolutely continuous.
\end{theorem}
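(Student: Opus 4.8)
The plan is to transport everything through the Floquet--Bloch decomposition. By Lemma~\ref{lem:propW} the map $W$ is unitary and the Floquet--Bloch decomposition gives $WHW^{-1}=\widehat H_{\ul p}$, so $H$ and $\widehat H_{\ul p}=\int^{\oplus}_{\mathbb V}\widehat H_{\ul p,\ul\theta}\,d\ul\theta$ have the same spectral type; it therefore suffices to show that every spectral measure of $\widehat H_{\ul p}$ is absolutely continuous with respect to Lebesgue measure on $\R$ (equivalently, that $\widehat H_{\ul p}$ is purely absolutely continuous). Fix $u=\{u(\ul\theta,\cdot)\}\in L^2(\mathbb{V}\times\mathbb{B})$. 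From the direct-integral functional calculus (\S XIII.16 of \cite{rs4}) its spectral measure is
\[
 \mu_u(B)=\int_{\mathbb V}\spr{\mathbf 1_B(\widehat H_{\ul p,\ul\theta})\,u(\ul\theta,\cdot)}{u(\ul\theta,\cdot)}_{\ell^2(\mathbb{B})}\,d\ul\theta ,\qquad B\subseteq\R \text{ Borel},
\]
the integrand being non-negative and measurable in $\ul\theta$ (the field $\ul\theta\mapsto\widehat H_{\ul p,\ul\theta}$ is even real-analytic, and $M\mapsto\mathbf 1_B(M)$ is Borel on self-adjoint matrices). Hence pure absolute continuity will follow once we show $\mu_u(B)=0$ whenever $|B|=0$.

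So fix $B$ with $|B|=0$ and split $\ul\theta=(t,\ul\theta^{\perp})$ with $t\in[0,1/p_1)$ and $\ul\theta^{\perp}\in\mathbb{V}^{\perp}:=\times_{j=2}^{d}[0,1/p_j)$. By Tonelli it suffices to prove, for a.e.\ $\ul\theta^{\perp}$ (namely for every $\ul\theta^{\perp}$ for which $t\mapsto u(t,\ul\theta^{\perp},\cdot)$ lies in $L^2$), that $\int_{0}^{1/p_1}\spr{\mathbf 1_B(A(t))\,u(t,\ul\theta^{\perp},\cdot)}{u(t,\ul\theta^{\perp},\cdot)}\,dt=0$, where $A(t)=\widehat H_{\ul p,(t,\ul\theta^{\perp})}$ is exactly the analytic family of Proposition~\ref{prop:lamjnonconst}. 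Fix such a $\ul\theta^{\perp}$. By that proposition (with Kato's Theorem~II.6.1, i.e.\ the one-parameter self-adjoint case of Rellich's theorem) the eigenvalues of $A(t)$ are given by real-analytic functions $\lambda_1(t),\dots,\lambda_P(t)$, $P=\prod_j p_j$, each of which is non-constant. I claim the set $N=\{t\in[0,1/p_1):\ \sigma(A(t))\cap B\neq\emptyset\}=\bigcup_{k=1}^{P}\lambda_k^{-1}(B)$ is Lebesgue-null. Granting this, for $t\notin N$ the projection $\mathbf 1_B(A(t))$ vanishes, hence the integrand vanishes for a.e.\ $t$ and the slice integral is $0$; integrating over $\ul\theta^{\perp}$ gives $\mu_u(B)=0$, completing the proof.

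It remains to verify the claim, which is the one genuinely analytic input. For each $k$ the function $\lambda_k$ is real-analytic on an interval (extend it to $\R$, or work on the compact closure $[0,1/p_1]$) and non-constant by Proposition~\ref{prop:lamjnonconst}, hence $\lambda_k'$ is real-analytic and not identically zero and so has a discrete---in particular at most countable---zero set $S_k$. On each connected component $J$ of the complement of $S_k$ the map $\lambda_k|_J$ is strictly monotone and $C^1$ with nowhere-vanishing derivative, so it is bi-Lipschitz on every compact subinterval of $J$; exhausting $J$ by such subintervals shows $\lambda_k^{-1}(B)\cap J$ is null whenever $|B|=0$, and summing over the countably many components and the countable set $S_k$, and over the finitely many $k$, gives $|N|=0$. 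The point I expect to be least automatic is precisely the global character of this step: as functions of the full quasimomentum $\ul\theta\in\mathbb{V}$ the band functions are only piecewise real-analytic because of band crossings, so one cannot argue directly in $d$ variables. Slicing into one-parameter families is what lets Rellich's theorem (already invoked for Proposition~\ref{prop:lamjnonconst}) supply globally real-analytic branches and reduces everything to the elementary one-variable fact above.
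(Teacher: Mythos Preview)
Your proof is correct and is essentially the argument behind the paper's own proof, which consists of a bare citation of Theorem~XIII.100 in Reed--Simon \cite{rs4}; you have simply unpacked that citation (direct-integral decomposition plus Thomas's argument, already recorded here as Proposition~\ref{prop:lamjnonconst}, that the analytic eigenvalue branches are non-constant) explicitly in the discrete setting.
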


\begin{proof}
 See \cite{rs4} Theorem XIII.100.
\end{proof}

\subsection{Space basis}
\label{ss:spacebasis}

In this section, we derive a second type
of Floquet--Bloch decomposition.
Let $\ul{\theta}\in\mathbb{V}$ and define
the space $\ell^2_{\ul{p}, \ul{\theta}}(\Z^d)$ as the set
of all functions $u: \Z^d \to \C$ such that
\be
 u(\ul{n} + p_j b_j) = e(p_j \theta_j) u(\ul{n}),\quad j=1,\dots,n,\ \ul{n}\in\Z^d,
\ee
with the norm
\be
 \| u \|^2 = \sum_{1 \leq n_1 \leq p_1} \dots \sum_{1 \leq n_d \leq p_d} |u(\ul{n})|^2.
\ee 
Define
\be\begin{split}
 U_{\ul{p}, \ul{\theta}} :\ell^2(\mathbb{B})&\to\ell^2_{\ul{p},\ul{\theta}}(\Z^d),\\
 U_{\ul{p}, \ul{\theta}}\varphi(\ul{n})&=
  \sum_{\ul{k}\in\mathbb{B}} \varphi(\ul{k}) 
   e\left(\sum_{j=1}^{d} \left(\theta_j + k_j\right) \cdot n_j\right).
\end{split}\ee
One can check that this operator is unitary.
Denote by $H_{\ul{p},\ul{\theta}}$ the restriction of
$H$ to $\ell^2_{\ul{p},\ul{\theta}}(\Z^d)$. 

\begin{lemma}
 We have
 \be
  \widehat{H}_{\ul{p},\ul{\theta}} = 
   U_{\ul{p}, \ul{\theta}}^{\ast} H_{\ul{p},\ul{\theta}}
    U_{\ul{p},\ul{\theta}}
 \ee
 In particular $\widehat{H}_{\ul{p},\ul{\theta}}$
 and $H_{\ul{p},\ul{\theta}}$ have the same eigenvalues.
\end{lemma}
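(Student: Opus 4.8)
The plan is to verify directly that $U_{\ul{p},\ul{\theta}}$ intertwines the two operators, i.e.\ that $H_{\ul{p},\ul{\theta}} U_{\ul{p},\ul{\theta}} = U_{\ul{p},\ul{\theta}} \widehat{H}_{\ul{p},\ul{\theta}}$ as maps $\ell^2(\mathbb{B}) \to \ell^2_{\ul{p},\ul{\theta}}(\Z^d)$. Since $U_{\ul{p},\ul{\theta}}$ is unitary (as already noted), rearranging this identity gives the claimed formula, and the statement about eigenvalues then follows immediately from unitary equivalence.

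First I would check that $U_{\ul{p},\ul{\theta}}$ really does land in $\ell^2_{\ul{p},\ul{\theta}}(\Z^d)$: replacing $\ul{n}$ by $\ul{n} + p_j b_j$ in the defining sum multiplies the $\ul{k}$-th term by $e((\theta_j + k_j) p_j)$, and since $k_j \in \{0, 1/p_j, \dots, (p_j-1)/p_j\}$ we have $e(k_j p_j) = 1$, leaving exactly the factor $e(p_j \theta_j)$ — so the boundary condition holds. Then I would compute both sides of the intertwining identity applied to a test function $\varphi \in \ell^2(\mathbb{B})$ and evaluated at $\ul{n}$. On one side, $H$ acts on $U_{\ul{p},\ul{\theta}}\varphi$: the Laplacian part produces $\sum_{j} (e(-(\theta_j+k_j)) + e(\theta_j+k_j)) = \sum_j 2\cos(2\pi(\theta_j + k_j))$ multiplying the $\ul{k}$-th exponential, which is precisely the symbol appearing in $\widehat{H}^0_{\ul{p},\ul{\theta}}$; and the potential part $V(\ul{n}) U_{\ul{p},\ul{\theta}}\varphi(\ul{n})$ is expanded via $V(\ul{n}) = \sum_{\ul{k}'} \widehat{V}(\ul{k}') e(\ul{k}'\cdot\ul{n})$, so that collecting the coefficient of $e((\ul{\theta}+\ul{k})\cdot\ul{n})$ yields the convolution $\sum_{\ul{k}'} \widehat{V}(\ul{k}') \varphi(\ul{k} - \ul{k}')$, matching $\widehat{V}\ast$. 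Comparing coefficients of the (linearly independent) exponentials $e((\ul{\theta}+\ul{k})\cdot\ul{n})$ finishes the verification.

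The one point requiring a little care — and the only place I would call a potential obstacle — is the bookkeeping in the convolution term: the indices $\ul{k}, \ul{k}'$ live in $\mathbb{B} \cong (\Z/p_1\Z) \times \cdots \times (\Z/p_d\Z)$, and the difference $\ul{k} - \ul{k}'$ must be interpreted modulo this lattice, consistently with how $\widehat{V}\ast$ and the original $\widehat{H}_{\ul{p},\ul{\theta}}$ were defined. One checks that $e(\ul{k}'\cdot\ul{n}) \cdot e((\ul{\theta}+\ul{k})\cdot\ul{n})$, after reindexing $\ul{k} \mapsto \ul{k} - \ul{k}'$, produces exactly $e((\ul{\theta} + \ul{k})\cdot \ul{n})$ because the $n_j$ are integers and shifting $k_j$ by an integer leaves $e(k_j n_j)$ unchanged — so the modular reduction is harmless. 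Everything else is a routine rearrangement of finite sums, so I would simply remark that the identity follows by a direct computation once the exponentials are separated.
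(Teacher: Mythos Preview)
Your proposal is correct and is exactly the direct computation the paper has in mind when it writes ``This is a computation.'' You have simply supplied the details the paper omits --- verifying the boundary condition, matching the diagonal cosine terms, and reindexing the convolution modulo $\mathbb{B}$ --- and nothing in the paper's approach differs from yours.
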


\begin{proof}
 This is a computation.
\end{proof}

Define $\mathbb{P} = \times_{j=1}^{d} \{1,\dots,p_j\}$,
$\Delta_{\ul{p}, \ul{\theta}}^j : \ell^2(\mathbb{P})
\to\ell^2(\mathbb{P})$ by
\be
 \Delta_{\ul{p}, \ul{\theta}}^j u(\ul{n}) =
  \begin{cases} 
   u(\ul{n} - b_j) + u(\ul{n} + b_j),&2 \leq n_j\leq p_j - 1\\
   u(\ul{n} - b_j) + e(\theta_j) u(\ul{n} + (p_j - 1) \cdot b_j),& n_j =  1\\
   u(\ul{n} - b_j) + e(-\theta_j) u(\ul{n} - (p_j - 1) \cdot b_j),& n_j = p_j,
 \end{cases}
\ee
and $\Delta_{\ul{p}, \ul{\theta}} = \sum_{j=1}^{d} \Delta_{\ul{p}, \ul{\theta}}^j$.
We note that $\ell^2_{\ul{p}, \ul{\theta}}(\Z^d)$
is isomorphic to $\ell^2(\mathbb{P})$ and that
$H \psi = E \psi$ if and only if
\be
 (\Delta_{\ul{p}, \ul{\theta}} + V)\ti{\psi} = E \ti{\psi}
\ee
where $\ti{\psi}$ denotes the restriction of $\psi$
to $\mathbb{P}$.

For $\ul{d}, \ul{p} \in (\Z_+)^d$, we define
\be
 (\ul{d}\ast\ul{p})_j = d_j \cdot p_j.
\ee
We have that

\begin{lemma}\label{lem:l2theta}
 Let $\ul{d}, \ul{p} \in (\Z_+)^d$,
 $\ul{\theta} \in \mathbb{V}_{\ul{d}\ast\ul{p}}$.
 Then
 \be
  \ell^2_{\ul{d}\ast\ul{p},\ul{\theta}}(\Z^d) =
   \bigoplus_{\varphi \in \Phi}
    \ell^2_{\ul{p},\ul{\varphi}}(\Z^d)    
 \ee
 where
 \be
  \Phi = \left\{(\theta_j + \frac{\ell_j}{p_j d_j})_{j=1}^{d},
   \quad \ul{\ell} \in \times_{j=1}^{d} \{0, \dots, d_j-1\}\right\}.
 \ee
\end{lemma}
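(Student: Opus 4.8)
The plan is to recognize $\ell^2_{\ul{d}\ast\ul{p},\ul{\theta}}(\Z^d)$ as carrying a natural action of the finite abelian group $\times_{j=1}^{d}\Z/d_j\Z$ by the phase-twisted translations $T_{\ul{r}}u(\ul{n})=u(\ul{n}+\sum_{j=1}^{d}r_jp_jb_j)$, and to identify the spaces $\ell^2_{\ul{p},\ul{\varphi}}(\Z^d)$ with $\ul{\varphi}\in\Phi$ as the joint eigenspaces of this action. Decomposing via the orthogonal projections onto these eigenspaces then yields the claim.

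First I would record that each summand lies inside the left-hand side: for $\ul{\varphi}=(\theta_j+\tfrac{\ell_j}{p_jd_j})_{j=1}^{d}\in\Phi$ one has $e(p_j\varphi_j)^{d_j}=e(d_jp_j\theta_j+\ell_j)=e(d_jp_j\theta_j)$, so any $u\in\ell^2_{\ul{p},\ul{\varphi}}(\Z^d)$ satisfies $u(\ul{n}+d_jp_jb_j)=e(d_jp_j\theta_j)u(\ul{n})$, i.e.\ $u\in\ell^2_{\ul{d}\ast\ul{p},\ul{\theta}}(\Z^d)$. Note also that since $\ul{\theta}\in\mathbb{V}_{\ul{d}\ast\ul{p}}$ the assignment $\ul{\ell}\mapsto\ul{\varphi}^{(\ul{\ell})}$ is injective, so $\#\Phi=\prod_{j}d_j$.

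Next I would prove mutual orthogonality. Given $u\in\ell^2_{\ul{p},\ul{\varphi}}$ and $v\in\ell^2_{\ul{p},\ul{\varphi}'}$ with $\ul{\varphi},\ul{\varphi}'\in\Phi$ coming from $\ul{\ell}\neq\ul{\ell}'$, I split the sum defining $\langle u,v\rangle$ over the fundamental domain $\times_{j}\{1,\dots,d_jp_j\}$ as a double sum over $\ul{m}\in\times_{j}\{1,\dots,p_j\}$ and $\ul{r}\in\times_{j}\{0,\dots,d_j-1\}$; using the quasiperiodicity to reduce $u(\ul{m}+\sum_jr_jp_jb_j)$ and $v(\ul{m}+\sum_jr_jp_jb_j)$ to $u(\ul{m})$, $v(\ul{m})$ times phases, the $\ul{r}$-sum factors as $\prod_j\sum_{r_j=0}^{d_j-1}e\big((\ell_j-\ell'_j)r_j/d_j\big)$, which vanishes in any coordinate with $\ell_j\neq\ell'_j$. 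For the decomposition itself I would write down the projections explicitly: for $u\in\ell^2_{\ul{d}\ast\ul{p},\ul{\theta}}(\Z^d)$ set
\[
 u_{\ul{\ell}}(\ul{n})=\frac{1}{\prod_jd_j}\sum_{\ul{r}}\Big(\prod_{j=1}^{d}e(-p_j\varphi^{(\ul{\ell})}_jr_j)\Big)\,u\Big(\ul{n}+\sum_{j=1}^{d}r_jp_jb_j\Big).
\]
A short check — the ``wraparound'' phase $e(d_jp_j\theta_j)$ that appears when $r_j$ passes $d_j$ combines with the twist to produce exactly $e(p_j\varphi^{(\ul{\ell})}_j)$ — shows $u_{\ul{\ell}}\in\ell^2_{\ul{p},\ul{\varphi}^{(\ul{\ell})}}(\Z^d)$, while orthogonality of the characters $\ul{\ell}\mapsto e(-\sum_j\ell_jr_j/d_j)$ on $\times_j\{0,\dots,d_j-1\}$ gives $\sum_{\ul{\ell}}u_{\ul{\ell}}=u$ (every term with $\ul{r}\neq\ul{0}$ cancels). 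Combined with the orthogonality above this is the asserted orthogonal direct sum; alternatively, once orthogonality is in hand one can simply invoke the dimension count $\prod_jd_j\cdot\prod_jp_j=\prod_jd_jp_j$. I would close by noting that the identification is isometric only up to the constant $\prod_jd_j$ (a function in $\ell^2_{\ul{p},\ul{\varphi}}$ has $\ul{d}\ast\ul{p}$-norm equal to $(\prod_jd_j)^{1/2}$ times its $\ul{p}$-norm, since $|u|$ is genuinely $\ul{p}$-periodic), so the equality in the statement is one of sets of functions carrying proportional norms.

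The only genuine difficulty is bookkeeping: keeping straight the three kinds of phases — $e(p_j\theta_j)$, $e(\ell_j/d_j)$, and the $e(\pm\theta_j)$ built into $\Delta_{\ul{p},\ul{\theta}}$ — under the reindexing $\ul{n}=\ul{m}+\sum_jr_jp_jb_j$, and checking that the twisted translations are well-defined modulo the $\ul{d}\ast\ul{p}$-quasiperiodicity. No analysis enters; once the indexing conventions are pinned down, every step is a finite computation.
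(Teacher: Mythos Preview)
Your argument is correct and in fact contains the paper's proof as its short alternative: the paper verifies the inclusion $\bigoplus_{\ul{\varphi}\in\Phi}\ell^2_{\ul{p},\ul{\varphi}}(\Z^d)\subseteq\ell^2_{\ul{d}\ast\ul{p},\ul{\theta}}(\Z^d)$ exactly as you do in your first paragraph, and then finishes immediately with the dimension count $\prod_j d_j\cdot\prod_j p_j=\prod_j d_jp_j$ that you mention at the end, never writing down projections or checking orthogonality. Your explicit character-theoretic decomposition and the remark on the norm rescaling are additional (and correct), but not needed for the statement as written.
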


\begin{proof}
 If $u\in \ell^2_{\ul{p},\ul{\varphi}}(\Z^d)$, then
 \[
  u(\ul{n} + d_j p_j b_j ) = e(p_j d_j \varphi_j) u(\ul{n})
 \]
 which is equal to $e(p_jd_j\theta_j) u(\ul{n})$
 if and only if $\ul{\varphi}\in \Phi$.
 Hence, we have that
 \[
  \ell^2_{\ul{d}\ast\ul{p},\ul{\theta}}(\Z^d) \supseteq
   \bigoplus_{\varphi \in \Phi}
    \ell^2_{\ul{p},\ul{\varphi}}(\Z^d).    
 \]
 A counting argument shows that the dimensions
 of the spaces agree. Hence, equality holds.
\end{proof}

This lemma implies that the eigenvalues
of $H_{\ul{d}\ast\ul{p}, \ul{\theta}}$
are the union over the eigenvalues
of $H_{\ul{p}, \ul{\varphi}}$ with
$\ul{\varphi}\in\Phi$.

%%%%%%%%%%%%%%%%%%%%%%%%%%%%%%%%%%%%%%%%%%%%%%%%5
%
%
%

\section{Bands and Gaps}
\label{sec:bands}

The goal of this section is to introduce the
language of bands and gaps and to prove basic
results about them.

\begin{lemma}\label{lem:evorder}
 The operator $\widehat{H}_{\ul{p}, \ul{\theta}}$ has 
 $P=\prod_{j=1}^{P} p_j$
 eigenvalues. Orders these
 \be
  E_1(\ul{\theta}) \leq E_2(\ul{\theta}) \leq \dots \leq E_P(\ul{\theta})
 \ee
 in increasing order. Then
 \be\label{eq:minmax}
  E_j(\ul{\theta}) = \min_{\dim(V) = j-1}
   \max_{\stackrel{\psi \perp V}{\|\psi\|=1}}
    \spr{\psi}{ \widehat{H}_{\ul{p}, \ul{\theta}} \psi}
 \ee
 in particular the $E_j:\mathbb{V}\to\R$ are continuous functions.
\end{lemma}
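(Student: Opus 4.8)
The plan is to prove the min–max formula \eqref{eq:minmax} first and then deduce continuity of the $E_j$ from it. The operator $\widehat{H}_{\ul{p},\ul{\theta}}$ acts on the finite-dimensional space $\ell^2(\mathbb{B})$, which has dimension $P = \prod_{j=1}^d p_j$; I would first check that $\widehat{H}_{\ul{p},\ul{\theta}}$ is self-adjoint on this space. The diagonal part $\widehat{H}^0_{\ul{p},\ul{\theta}}$ is multiplication by the real function $\sum_j 2\cos(2\pi(\ell_j+\theta_j))$, hence self-adjoint; the convolution part $\widehat{V}\ast$ is self-adjoint because $V$ is real-valued, which forces $\overline{\widehat{V}(\ul{k})} = \widehat{V}(-\ul{k})$, and one checks that this symmetry is exactly what makes $\psi \mapsto \widehat{V}\ast\psi$ symmetric with respect to the inner product on $\ell^2(\mathbb{B})$ (noting that $\ul{\ell}-\ul{k}$ is taken modulo the period lattice so that $-\ul{k}$ and $\ul{k}$ live in $\mathbb{B}$). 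Therefore $\widehat{H}_{\ul{p},\ul{\theta}}$ has exactly $P$ real eigenvalues counted with multiplicity, which we may list in increasing order as $E_1(\ul{\theta}) \le \dots \le E_P(\ul{\theta})$.

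With self-adjointness in hand, the formula \eqref{eq:minmax} is precisely the Courant–Fischer min–max theorem for Hermitian matrices, applied to $\widehat{H}_{\ul{p},\ul{\theta}}$ on the $P$-dimensional space $\ell^2(\mathbb{B})$. I would simply invoke this standard result (e.g. Reed--Simon \cite{rs4} or any linear algebra reference), observing that the indexing convention matches: the $j$-th smallest eigenvalue is obtained by minimizing over $(j-1)$-dimensional subspaces $V$ the maximum of the Rayleigh quotient $\spr{\psi}{\widehat{H}_{\ul{p},\ul{\theta}}\psi}$ over unit vectors $\psi \perp V$. No real computation is needed here beyond matching conventions.

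For continuity of $E_j: \mathbb{V} \to \R$, the key observation is that the map $\ul{\theta} \mapsto \widehat{H}_{\ul{p},\ul{\theta}}$ is continuous (indeed real-analytic) in operator norm on the finite-dimensional space $\ell^2(\mathbb{B})$, since only the diagonal entries $\sum_j 2\cos(2\pi(\ell_j + \theta_j))$ depend on $\ul{\theta}$ and they do so continuously, while $\widehat{V}\ast$ is independent of $\ul{\theta}$. Then for any two parameters $\ul{\theta}, \ul{\theta}'$ the min–max formula \eqref{eq:minmax} gives the Lipschitz-type bound
\be
 |E_j(\ul{\theta}) - E_j(\ul{\theta}')| \leq \|\widehat{H}_{\ul{p},\ul{\theta}} - \widehat{H}_{\ul{p},\ul{\theta}'}\|,
\ee
because replacing $\widehat{H}_{\ul{p},\ul{\theta}}$ by $\widehat{H}_{\ul{p},\ul{\theta}'}$ perturbs every Rayleigh quotient $\spr{\psi}{\widehat{H}_{\ul{p},\ul{\theta}}\psi}$ by at most $\|\widehat{H}_{\ul{p},\ul{\theta}} - \widehat{H}_{\ul{p},\ul{\theta}'}\|$ uniformly in the unit vector $\psi$, hence perturbs the nested $\min\max$ by at most that amount. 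Combining this with the continuity of $\ul{\theta}\mapsto\widehat{H}_{\ul{p},\ul{\theta}}$ yields continuity of each $E_j$.

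I do not expect a serious obstacle here: the only mild subtlety is being careful that the convolution in $\widehat{V}\ast$ is over the finite abelian group structure of $\mathbb{B}$ (so that indices like $\ul{\ell}-\ul{k}$ are interpreted modulo periods), which is needed both for self-adjointness and for the count of exactly $P$ eigenvalues. Everything else is a direct application of the Courant–Fischer theorem and the stability of min–max values under norm perturbations.
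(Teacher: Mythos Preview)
Your proposal is correct and follows the same approach as the paper: the paper's proof simply notes that $\ell^2(\mathbb{B})$ is $P$-dimensional, cites the min--max principle for \eqref{eq:minmax}, and states that continuity of the $E_j$ follows from it. Your write-up just fills in the details the paper leaves implicit (self-adjointness of $\widehat{H}_{\ul{p},\ul{\theta}}$ and the Lipschitz bound deduced from min--max), so there is no substantive difference.
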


\begin{proof}
 The number of eigenvalues is $P$, since $\ell^2(\mathbb{B})$ is $P$-dimensional. \eqref{eq:minmax} is the min-max principle
 and implies that the $E_j$ are continuous.l
\end{proof}

We will now relate the properties of the functions $E_j(\ul{\theta})$
to the spectrum of $H$. Define
\be
 E_j^- = \min_{\ul{\theta}\in\mathbb{V}} E_j(\ul{\theta}),\quad
 E_j^+ = \max_{\ul{\theta}\in\mathbb{V}} E_j(\ul{\theta}).
\ee

\begin{definition}
 The intervals $[E_j^-, E_j^+]$ are called bands.
 If $E_j^+ < E_{j+1}^-$, then $(E_j^+, E_{j+1}^-)$
 is called a gap.
\end{definition}

It is clear that the bands are subset of the spectrum $\sigma(H)$
of $H$ and that gaps of the resolvent. We furthermore
note

\begin{theorem}\label{thm:numgapsfinite}
 The spectrum of the $\ul{p}$-periodic Schr\"odinger operator
 $H = \Delta + V$ is given by
 \be
  \sigma(\Delta + V) = \bigcup_{j=1}^{P} [E_{j}^-, E_{j}^+].
 \ee 
 In particular, it contains at most $P - 1 = \prod_{j=1}^{d} p_j - 1$ 
 many gaps.
\end{theorem}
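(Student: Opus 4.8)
The plan is to combine the Floquet--Bloch decomposition with the structure of the band functions $E_j(\ul\theta)$. First I would recall from the Proposition that $WHW^{-1} = \widehat H_{\ul p}$ acts as a direct integral over $\ul\theta \in \mathbb V$ of the finite-dimensional operators $\widehat H_{\ul p,\ul\theta}$. By the general theory of direct integrals (Section XIII.16 of \cite{rs4}), the spectrum of $H$ is the closure of $\bigcup_{\ul\theta \in \mathbb V} \sigma(\widehat H_{\ul p,\ul\theta})$. Since each $\widehat H_{\ul p,\ul\theta}$ has exactly the $P$ eigenvalues $E_1(\ul\theta) \le \dots \le E_P(\ul\theta)$ by Lemma~\ref{lem:evorder}, this gives $\sigma(H) = \overline{\bigcup_{j=1}^P E_j(\mathbb V)}$.

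Next I would use that each $E_j:\mathbb V \to \R$ is continuous (Lemma~\ref{lem:evorder}) and that $\mathbb V$ is — up to the identifications coming from Lemma~\ref{lem:hatHper}, which makes $E_j$ periodic and hence well-defined on the compact torus $\times_{j=1}^d (\R/\tfrac{1}{p_j}\Z)$ — a connected compact set. A continuous real-valued function on a connected compact set has image exactly the closed interval between its minimum and maximum, so $E_j(\mathbb V) = [E_j^-, E_j^+]$, which is already closed. Therefore the finite union $\bigcup_{j=1}^P [E_j^-, E_j^+]$ is closed, the closure in the previous paragraph is superfluous, and $\sigma(\Delta+V) = \bigcup_{j=1}^P [E_j^-,E_j^+]$.

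For the count of gaps: the complement of $\sigma(H)$ in $\R$ consists of the two unbounded components together with the bounded components, and each bounded component of $\R \setminus \bigcup_{j=1}^P[E_j^-,E_j^+]$ lies strictly between consecutive bands. Since the bands are ordered (if $E_j(\ul\theta)\le E_{j+1}(\ul\theta)$ pointwise then $E_j^- \le E_{j+1}^-$ and $E_j^+ \le E_{j+1}^+$, and in fact $E_j^+ \le E_{j+1}^+$ with the gap, if any, being $(E_j^+, E_{j+1}^-)$), there are at most $P-1$ places where a gap can open, namely between band $j$ and band $j+1$ for $j=1,\dots,P-1$. Hence at most $P-1 = \prod_{j=1}^d p_j - 1$ gaps.

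The main obstacle, and the only point requiring a little care, is the connectedness/compactness input: one must argue that after passing to the correct quotient torus the $E_j$ descend to continuous functions on a connected compact space, so that each band is genuinely a full closed interval rather than a union of sub-intervals, and that the ordering $E_j \le E_{j+1}$ is preserved under taking min and max over $\ul\theta$. Both facts are elementary — connectedness of the torus is immediate, and the ordering passes to extrema because $E_j^- = \min_{\ul\theta} E_j(\ul\theta) \le \min_{\ul\theta} E_{j+1}(\ul\theta) = E_{j+1}^-$ and similarly for $E^+$ — but they are the crux of why the spectrum is a finite union of closed intervals with controlled combinatorics.
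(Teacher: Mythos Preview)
Your proof is correct and follows essentially the same route as the paper, which simply writes ``See Theorem~XIII.85 in \cite{rs4} and use that the functions $E_j$ are continuous.''  You have unpacked that citation: the direct-integral identification of $\sigma(H)$ with the closure of $\bigcup_{\ul\theta}\sigma(\widehat H_{\ul p,\ul\theta})$ is exactly what Theorem~XIII.85 provides, and your explicit compactification of $\mathbb V$ to a torus via Lemma~\ref{lem:hatHper} (so that each continuous $E_j$ has a full closed interval as its range) is the detail hidden behind the paper's phrase ``use that the functions $E_j$ are continuous.''
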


\begin{proof}
 See Theorem~XIII.85 in \cite{rs4} and use that the
 functions $E_j$ are continuous.
\end{proof}

One should point out that all these gaps can occur
as the following is a somewhat degenerate example shows.
Let $\ul{m}^{\ell}$ be an enumeration of 
$\mathbb{P} = \times_{j=1}^{d} \{1, \dots, p_j\}$
and define a potential $V$ by
\be
 V(\ul{n}) = (4 d + 1) \ell
\ee
whenever $n_j = m^{\ell}_j \pmod{p_j}$ for $j=1,\dots,d$.
It is then relatively easy to check that
\[
 \sigma(\Delta + V) \cap ((4d + 1)\ell + 2d, (2d + 1)\ell + 2d + 1) = \emptyset
\]
for $\ell = 1, \dots, P - 1$
and
\[
 \sigma(\Delta + V) \cap ((4d +1) \ell - 2d, (4 d + 1) \ell + 2d) \neq \emptyset
\]
for $\ell = 1, \dots, P$. Hence $\sigma(\Delta + V)$ contains
at least $P - 1$ many gaps, and by Theorem~\ref{thm:numgapsfinite}
exactly $P - 1$.

\subsection{Non-constancy}

We will now discuss further properties of the eigenvalue
parametrization from Lemma~\ref{lem:evorder}. 
For $X$ a topological space, we will call
a point $x \in X$ a {\em point of increase} of 
a function $f:X\to\R$
if for any open set $x \in U$, we have
\be
 \inf_{y\in U} f(y) < f(x) < \sup_{y\in U} f(y).
\ee
Or in words, we can find $y, \ti{y}$ arbitrarily close to $x$
such that $f(y) < f(x) < f(\ti{y})$. The main result is

\begin{theorem}\label{thm:ptofincrease}
 Let $[E_j^-, E_j^+]$ be a band. Then for any $E \in (E_j^-, E_j^+)$,
 there exist infinitely many points $\ul{\theta}$ such that $E_j(\ul{\theta}) = E$
 and $\ul{\theta}$ is a point of increase.
\end{theorem}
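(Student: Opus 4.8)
The plan is to exploit the analytic parametrization of the eigenvalues from Proposition~\ref{prop:lamjnonconst} together with a connectedness/continuity argument. First I would fix a direction, say the first coordinate, and for a generic choice of the transverse parameters $\ul{\theta}^\perp = \{\theta_j\}_{j=2}^d$ consider the analytic family $A(t) = \widehat{H}_{\ul{p},(t,\ul{\theta}^\perp)}$. By Proposition~\ref{prop:lamjnonconst} its eigenvalues $\lambda_i(t)$ are analytic and \emph{non-constant} in $t$. A non-constant real-analytic (or, on the real line, even just analytic) function of one real variable has no interval on which it is locally constant, so on the torus $\T$ (restricting $t$ to the real period, using Lemma~\ref{lem:hatHper}) each $\lambda_i$ attains its max and min at isolated sets and, crucially, every value strictly between $\min_t \lambda_i(t)$ and $\max_t \lambda_i(t)$ is attained at a point of increase in the one-variable sense: if $\lambda_i(t_0) = E$ with $E$ strictly between the extrema, then $\lambda_i$ cannot be locally constant at $t_0$ (non-constancy + analyticity), so $t_0$ is automatically a point of increase along this line, hence a point of increase of $E_j$ as a function on $\mathbb{V}$.

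Next I would connect this per-line statement to the two-sided statement about the band $[E_j^-, E_j^+]$. The band function $E_j(\ul{\theta})$ from Lemma~\ref{lem:evorder} is continuous on the connected set $\mathbb{V}$, and $E_j^- = \min E_j$, $E_j^+ = \max E_j$ are attained. Given $E \in (E_j^-, E_j^+)$, by the intermediate value theorem along a path in $\mathbb{V}$ from a minimizer to a maximizer there is a $\ul{\theta}^\ast$ with $E_j(\ul{\theta}^\ast) = E$. The issue is that the \emph{ordered} eigenvalue $E_j(\ul{\theta})$ is only piecewise-analytic — it is a maximum/minimum over crossings of the analytic branches $\lambda_i$ — so I need to relate a value of $E_j$ to a value of some analytic branch $\lambda_i$. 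The key point: near $\ul{\theta}^\ast$, $E_j(\ul{\theta})$ coincides locally with $\lambda_i(\ul{\theta})$ for some branch $i$ (possibly after the analytic continuation of branches has been relabeled). Choosing the transverse coordinates $\ul{\theta}^\perp$ appropriately (a dense set of choices works, since we only need finitely many exceptional values excluded — this is where one can invoke that branches are non-constant so their level sets in a $(d)$-dimensional parameter space are not everything), one arranges that along the line $t \mapsto (t,\ul{\theta}^\perp)$ the relevant analytic branch $\lambda_i(t)$ genuinely varies and that $E$ is strictly between its min and max on that line, so that $t^\ast$ is a point of increase. Since there are infinitely many choices of $\ul{\theta}^\perp$ producing distinct $\ul{\theta}$ on the level set $\{E_j = E\}$, we get infinitely many points of increase.

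The main obstacle I anticipate is the bookkeeping between the globally-defined \emph{ordered} band function $E_j$ (continuous but only piecewise real-analytic, with possible eigenvalue crossings) and the \emph{analytically-continued} branches $\lambda_i$ (analytic but not globally ordered). One has to argue that a value $E$ in the open band is always hit by some branch transversally — i.e.\ with a genuine point of increase — and not merely at a degenerate critical point or at a crossing where the ordering switches. The cleanest route is: (1) show the \emph{union} of the ranges of the analytic branches along generic lines, as $\ul{\theta}^\perp$ varies, sweeps out all of $(E_j^-, E_j^+)$ for the appropriate $j$; (2) observe a non-constant analytic function on $\T$ has only finitely many critical points and takes every interior value at a point of increase, of which there are at least two; and (3) use continuity of everything in $\ul{\theta}^\perp$ plus the pigeonhole/dimension count to get infinitely many such $\ul{\theta}$. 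Steps (2) and (3) are routine; step (1), identifying which branch realizes the band and controlling crossings, is the delicate part and will likely require the argument behind Theorem~\ref{thm:coprimeevsimple} or at least a transversality/real-analyticity argument to handle the measure-zero bad set of parameters.
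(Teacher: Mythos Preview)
Your plan can be made to work, but you are making this far harder than it needs to be, and the obstacle you flag at the end (controlling crossings between the ordered band function $E_j$ and the analytic branches $\lambda_i$) is a genuine difficulty \emph{for your approach} that the paper sidesteps entirely.

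The paper's argument is purely topological and uses nothing beyond continuity of $E_j$ and the fact (Lemma~\ref{lem:evjnonconst}) that $E_j$ is not constant on any open set. Concretely: set $A = E_j^{-1}((-\infty,E))$ and $B = E_j^{-1}((E,\infty))$. Both are open and nonempty (since $E_j^- < E < E_j^+$), and the level set $C = E_j^{-1}(\{E\})$ separates them in the connected set $\overline{\mathbb{V}}$, hence is infinite. Because $C$ has empty interior, the set $C \cap \overline{A} \cap \overline{B}$ is still infinite, and by definition every point in it is a point of increase. That is the whole proof.

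The point is that the notion of ``point of increase'' is topological, so one never needs to pass to the analytic branches $\lambda_i$ or worry about the ordered/unordered bookkeeping; the single continuous function $E_j$ carries all the information. Your proposed route --- slicing along coordinate lines, invoking real-analyticity to get isolated critical points, then controlling crossings --- reproduces Lemma~\ref{lem:evjnonconst} as a byproduct but then keeps going in a direction that is unnecessary. In particular, your suggestion that one might need Theorem~\ref{thm:coprimeevsimple} to handle crossings should be a warning sign: that result is specific to $d=2$ with coprime periods, whereas Theorem~\ref{thm:ptofincrease} is stated and proved for arbitrary $d$ and arbitrary periods. If your argument for a general statement requires a special-case tool, the strategy is misaligned.
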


A refinement of the following argument, noting that
$E_j^{-1}(E)$ is the boundary of an open set, shows
that the set of $\ul{\theta}$ described in this
theorem has Hausdorff dimension at least $d-1$.
For the proof, we will need the following lemma

\begin{lemma}\label{lem:evjnonconst}
 The eigenvalues $E_j(\ul{\theta})$ defined in Lemma~\ref{lem:evorder}
 are not constant on an open set.
\end{lemma}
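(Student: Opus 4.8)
\textbf{Proof plan for Lemma~\ref{lem:evjnonconst}.}

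The plan is to argue by contradiction: suppose some band function $E_j(\ul{\theta})$ is constant, equal to $c$, on a nonempty open set $\mathcal{O}\subseteq\mathbb{V}$. The strategy is to combine this with the analytic parametrization from Proposition~\ref{prop:lamjnonconst} to force one of the analytic eigenvalue branches $\lambda_i(t)$ of the complexified family $A(t)=\widehat{H}_{\ul{p},(t,\ul{\theta}^\perp)}$ to be constant, contradicting the non-constancy asserted there. First I would fix $\ul{\theta}^\perp=(\theta_2,\dots,\theta_d)$ to be the last $d-1$ coordinates of some point of $\mathcal{O}$, so that the slice $t\mapsto(t,\ul{\theta}^\perp)$ passes through $\mathcal{O}$ for $t$ in a nonempty open real interval $I$. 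Along this slice, $c$ is an eigenvalue of $A(t)$ for every $t\in I$, since $E_j(t,\ul{\theta}^\perp)=c$ there.

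Next I would pass from the ordered (but only continuous) enumeration $\{E_i(\ul{\theta})\}$ to the analytic enumeration $\{\lambda_i(t)\}$ from Proposition~\ref{prop:lamjnonconst}. For each $t\in I$, the value $c$ appears among the $\lambda_i(t)$; since there are only finitely many branches $\lambda_1,\dots,\lambda_P$, by the pigeonhole principle there is a fixed index $i$ and an infinite subset of $I$ on which $\lambda_i(t)=c$. An infinite subset of the interval $I$ has an accumulation point inside $I$ (or at worst one can take a subinterval where infinitely many coincidences occur and again accumulate), so the analytic function $\lambda_i(t)-c$ vanishes on a set with an accumulation point in its domain $\C$. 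By the identity theorem for analytic functions, $\lambda_i(t)\equiv c$ on all of $\C$. This contradicts the second assertion of Proposition~\ref{prop:lamjnonconst}, that every $\lambda_i$ is non-constant. Hence no $E_j$ can be constant on an open set.

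The one point that needs mild care — and which I expect to be the only real obstacle — is the passage from ``$c\in\{E_i(\ul{\theta})\}$ for $\ul{\theta}$ in an open set'' to ``$c\in\{\lambda_i(t)\}$ for infinitely many $t$ accumulating somewhere''. The subtlety is that the analytic labelling $\lambda_i$ need not respect the size-ordering, so a single analytic branch may only catch the value $c$ on a subset of $I$; but since the finitely many analytic branches together exhaust all eigenvalues at every $t$, one of them must hit $c$ on an infinite (indeed uncountable, hence accumulating) subset of $I$, which is all the identity theorem requires. One should also note that $\lambda_i$ is genuinely analytic on a neighborhood of the real axis containing $I$ (guaranteed by Proposition~\ref{prop:lamjnonconst} and Theorem~II.6.1 of \cite{kato}), so the identity theorem applies. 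Everything else is routine.
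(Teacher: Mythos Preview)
Your proposal is correct and follows exactly the paper's approach: reduce constancy of some $E_j$ on an open set to constancy of one of the analytic branches $\lambda_\ell(t)$ from Proposition~\ref{prop:lamjnonconst}, yielding a contradiction. The paper's proof is a two-line sketch that simply asserts this reduction; your write-up supplies the details (slice in one variable, pigeonhole among finitely many branches, identity theorem) that the paper leaves implicit.
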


\begin{proof}
 Assume that $E_j$ was constant on an open set. This
 would in imply that $\lambda_\ell(t)$ as in Proposition~\ref{prop:lamjnonconst} was constant for some $\ell$.
 A contradiction.
\end{proof}

\begin{proof}[Proof of Theorem~\ref{thm:ptofincrease}]
 The two sets $A = E_j^{-1} ((-\infty, E))$ and $B = E_j^{-1}((E,\infty))$
 are disjoint and open in $\mathbb{V}$. Hence, 
 \[
  C = [0,1]^d \setminus (A\cup B) = E_j^{-1}(\{E\})
 \]
 must be infinite. By the previous lemma
 $C$ does not contain an open set.
 This implies that also
 \[
  \widetilde{C} = C \cap \ol{A} \cap \ol{B}
 \]
 is infinite, and by definition contains points of increase
 of $E_j$. Hence, we are done.
\end{proof}

We furthermore note the obvious lemma

\begin{lemma}\label{lem:incimpint}
 Let $\ul{\theta}$ be a point of increase of $E_j$,
 then $E_j(\ul{\theta})$ is in the interior of a band.
\end{lemma}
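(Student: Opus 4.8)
The statement to prove is Lemma~\ref{lem:incimpint}: if $\ul{\theta}$ is a point of increase of $E_j$, then $E_j(\ul{\theta})$ lies in the interior of the band $[E_j^-, E_j^+]$.

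\medskip

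The plan is to unwind the definitions directly. Set $E = E_j(\ul{\theta})$. Since $\ul{\theta}$ is a point of increase, for every open neighborhood $U$ of $\ul{\theta}$ in $\mathbb{V}$ we have $\inf_{U} E_j < E < \sup_{U} E_j$. In particular, taking $U = \mathbb{V}$ (or using that $\mathbb{V}$ itself is a neighborhood of $\ul{\theta}$, working in $\T^d$), we get $E_j^- = \min_{\ul{\theta}\in\mathbb{V}} E_j(\ul{\theta}) \le \inf_{U} E_j < E$ and similarly $E < \sup_U E_j \le \max_{\ul{\theta}\in\mathbb{V}} E_j(\ul{\theta}) = E_j^+$. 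Hence $E_j^- < E < E_j^+$, so $E$ is in the open interval $(E_j^-, E_j^+)$, which is the interior of the band. That is the whole argument.

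\medskip

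The only point requiring a little care is the topological setting: $\mathbb{V} = \times_{j=1}^d [0, \tfrac{1}{p_j})$ is a half-open box, so ``open neighborhood of $\ul{\theta}$ in $\mathbb{V}$'' should be understood with $\mathbb{V}$ carrying its natural topology as a fundamental domain for $\T^d$ — equivalently one identifies $\mathbb{V}$ with $\T^d$ via $\ul{\theta}\mapsto(\theta_j \bmod \tfrac1{p_j})_j$, under which $E_j$ extends continuously and $1/p_j$-periodically (by Lemma~\ref{lem:hatHper}, shifting $\theta_j$ by $1/p_j$ gives a unitarily equivalent operator, hence the same eigenvalue list). With that identification $\mathbb{V}$ is a genuine open set containing $\ul{\theta}$, so the definition of point of increase applies with $U = \mathbb{V}$ and the two displayed inequalities are immediate. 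I expect no real obstacle here — this is the ``obvious lemma'' the text advertises — beyond being slightly explicit about the domain so that $\inf$ and $\sup$ over a neighborhood are genuinely bounded by the global min and max $E_j^\mp$.
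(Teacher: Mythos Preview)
Your argument is correct and is exactly the one-line observation the paper has in mind: the whole space $\mathbb{V}$ is itself an open neighborhood of $\ul{\theta}$, so the point-of-increase condition immediately yields $E_j^- = \inf_{\mathbb{V}} E_j < E_j(\ul{\theta}) < \sup_{\mathbb{V}} E_j = E_j^+$. The paper does not actually write out a proof (it simply calls this ``the obvious lemma''), so there is nothing further to compare; your topological remark about identifying $\mathbb{V}$ with a torus via Lemma~\ref{lem:hatHper} is a reasonable bit of extra care but not strictly needed, since any space is open in itself.
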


\subsection{Stability of the spectrum being an interval}

We start with

\begin{definition}
 Let $H$ be a $\ul{p}$-periodic Schr\"odinger operator and
 $\delta \in \R$. We say that the bands of $H$ are $\delta$-overlapping
 if
 \be
  E_{j+1}^- - E_{j}^+ \geq \delta
 \ee
 for $j=1,\dots, P -1$. The bands of $H$ are called overlapping
 if they are $\delta$-overlapping for some $\delta > 0$.
\end{definition}

In particular, if the bands of $H$ are overlapping, then
the spectrum of $H$ is an interval. We allow for negative
values of $\delta$ so statements like the next theorem
become possible without restrictions on $\|V\|_{\infty}$.

\begin{theorem}\label{thm:bandoverlap}
 Let the bands of $H$ be $\delta$-overlapping.
 Then the bands of $H + V$ are $\delta- 2\|V\|_{\infty}$-overlapping.
\end{theorem}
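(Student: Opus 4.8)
The plan is to compare the Floquet--Bloch fibers of $H$ and $H+V$ at each fixed $\ul{\theta}$ via the min-max characterization \eqref{eq:minmax}. Fix $\ul{\theta}\in\mathbb{V}$. The fiber operator of $H+V$ at $\ul{\theta}$ is $\widehat{H}_{\ul{p},\ul{\theta}}$ plus the convolution operator $\widehat{V}\ast$; but it is cleaner to work in the space representation of Subsection~\ref{ss:spacebasis}, where the fiber of $H$ is $\Delta_{\ul{p},\ul{\theta}}+V_0$ (with $V_0$ the original periodic potential) acting on $\ell^2(\mathbb{P})$ and the fiber of $H+V$ is $\Delta_{\ul{p},\ul{\theta}}+V_0+V$. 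Since $V$ is a multiplication operator on $\ell^2(\mathbb{P})$, it is bounded and self-adjoint with $\|V\|_{\mathrm{op}}\le\|V\|_\infty$, so for every unit vector $\psi$ one has $|\spr{\psi}{V\psi}|\le\|V\|_\infty$.

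Denote by $E_j(\ul{\theta})$ the ordered eigenvalues of the fiber of $H$ and by $\widetilde{E}_j(\ul{\theta})$ those of the fiber of $H+V$. First I would apply \eqref{eq:minmax} to both: for any subspace $L$ with $\dim L=j-1$ and any unit $\psi\perp L$, $\spr{\psi}{(\Delta_{\ul{p},\ul{\theta}}+V_0+V)\psi}=\spr{\psi}{(\Delta_{\ul{p},\ul{\theta}}+V_0)\psi}+\spr{\psi}{V\psi}\le\spr{\psi}{(\Delta_{\ul{p},\ul{\theta}}+V_0)\psi}+\|V\|_\infty$. Taking max over such $\psi$ and then min over $L$ gives $\widetilde{E}_j(\ul{\theta})\le E_j(\ul{\theta})+\|V\|_\infty$; the reverse inequality $\widetilde{E}_j(\ul{\theta})\ge E_j(\ul{\theta})-\|V\|_\infty$ follows identically (or by applying the first bound with $V$ replaced by $-V$ to the operator $H+V$). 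Hence $|\widetilde{E}_j(\ul{\theta})-E_j(\ul{\theta})|\le\|V\|_\infty$ for all $j$ and all $\ul{\theta}$.

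Next I would pass to the band edges. Taking the min over $\ul{\theta}\in\mathbb{V}$ of $\widetilde{E}_{j+1}(\ul{\theta})\ge E_{j+1}(\ul{\theta})-\|V\|_\infty$ yields $\widetilde{E}_{j+1}^-\ge E_{j+1}^- -\|V\|_\infty$, and taking the max over $\ul{\theta}$ of $\widetilde{E}_j(\ul{\theta})\le E_j(\ul{\theta})+\|V\|_\infty$ yields $\widetilde{E}_j^+\le E_j^+ +\|V\|_\infty$. Subtracting,
\[
 \widetilde{E}_{j+1}^- - \widetilde{E}_j^+ \ge (E_{j+1}^- - E_j^+) - 2\|V\|_\infty \ge \delta - 2\|V\|_\infty
\]
for $j=1,\dots,P-1$, which is exactly the claim that the bands of $H+V$ are $(\delta-2\|V\|_\infty)$-overlapping.

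I do not expect a genuine obstacle here: the only points requiring mild care are (a) ensuring $H$ and $H+V$ share the same period vector $\ul{p}$ and hence the same fiber decomposition and the same index set $\mathbb{P}$ (so that $E_j$ and $\widetilde{E}_j$ are indexed consistently), which is automatic since $V$ is taken to be $\ul{p}$-periodic; and (b) the elementary operator inequality $-\|V\|_\infty\,\id\le V\le\|V\|_\infty\,\id$ for the multiplication operator $V$ on the finite-dimensional fiber. Everything else is a direct application of the min-max principle already recorded in Lemma~\ref{lem:evorder}.
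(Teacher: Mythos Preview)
Your proof is correct and follows essentially the same route as the paper: the paper's proof invokes the preceding lemma, which records exactly the pointwise bound $|E_j(\ul{\theta},V)-E_j(\ul{\theta},0)|\le\|V\|_\infty$ obtained from the min-max principle, and then deduces the theorem in one line. Your only cosmetic difference is passing to the space representation of Subsection~\ref{ss:spacebasis} to make the perturbation a multiplication operator, which is harmless since the fibers are unitarily equivalent.
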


For $\ul{p}$-periodic $V:\Z^d\to\R$,
we denote by $E_j(\ul{\theta}, V)$ the eigenvalues of
$H_{\ul{p}, \ul{\theta}} - V$ as defined in Lemma~\ref{lem:evorder}.
We note the following simple lemma.

\begin{lemma}
 We have
 \be
  E_j(\ul{\theta}, 0) - \|V\|_{\infty} \leq E_j(\ul{\theta}, V)
  \leq E_j(\ul{\theta}, 0) + \|V\|_{\infty}.
 \ee
 In particular, if
 \be
  \|V\|_{\infty} \leq \frac{1}{2}
   \min_{1\leq j\leq P-1} (E_j^{+} - E_{j+1}^{-})
 \ee
 then the spectrum of $H + V$ is an interval.
\end{lemma}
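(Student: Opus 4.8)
The plan is to get the two-sided eigenvalue estimate straight out of the min-max formula \eqref{eq:minmax}, and then to deduce that the spectrum is an interval from Theorem~\ref{thm:numgapsfinite}.

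First I would observe that, in the space basis of Subsection~\ref{ss:spacebasis}, subtracting $V$ from $H_{\ul{p},\ul{\theta}}$ amounts to adding the operator of multiplication by $-V$ on $\ell^2(\mathbb{P})$; since $V$ is real-valued and $\ul{p}$-periodic, this is a bounded self-adjoint operator with norm $\max_{\ul{n}\in\mathbb{P}}|V(\ul{n})| = \|V\|_{\infty}$. (Equivalently, in the momentum picture one adds $-\widehat{V}\ast$, which is conjugate to that multiplication operator by the unitary $U_{\ul{p},\ul{\theta}}$ and hence has the same norm; recall also that $\widehat{H}_{\ul{p},\ul{\theta}}$ and $H_{\ul{p},\ul{\theta}}$ have the same eigenvalues, so \eqref{eq:minmax} applies to either.) Consequently $|\spr{\psi}{V\psi}| \le \|V\|_{\infty}$ for every unit vector $\psi$. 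Feeding this into the variational principle — for any subspace $W$ with $\dim W = j-1$ and any unit $\psi \perp W$ one has $\spr{\psi}{(H_{\ul{p},\ul{\theta}} - V)\psi} \le \spr{\psi}{H_{\ul{p},\ul{\theta}}\psi} + \|V\|_{\infty}$, then take the maximum over $\psi$ and the minimum over $W$ — gives $E_j(\ul{\theta},V) \le E_j(\ul{\theta},0) + \|V\|_{\infty}$, and running the same computation with $-V$ in place of $V$ (using $\|-V\|_{\infty}=\|V\|_{\infty}$) gives the matching lower bound. This is the displayed estimate.

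For the ``in particular'' statement, I would apply the estimate just proved with $-V$ in place of $V$ to control the band edges $\widehat{E}_j^\pm$ of $H+V$. Since these are the extrema over $\ul{\theta}\in\mathbb{V}$ of the eigenvalues of $H_{\ul{p},\ul{\theta}}+V = H_{\ul{p},\ul{\theta}}-(-V)$, one gets $\widehat{E}_j^+ \ge E_j^+ - \|V\|_{\infty}$ and $\widehat{E}_{j+1}^- \le E_{j+1}^- + \|V\|_{\infty}$, whence
\[
 \widehat{E}_{j+1}^- - \widehat{E}_j^+ \ \le\ (E_{j+1}^- - E_j^+) + 2\|V\|_{\infty} \ \le\ 0
\]
for $j=1,\dots,P-1$ under the hypothesis $\|V\|_{\infty} \le \tfrac12 \min_{1\le j\le P-1}(E_j^+ - E_{j+1}^-)$. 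Thus consecutive bands of $H+V$ meet, and Theorem~\ref{thm:numgapsfinite} identifies $\sigma(H+V)=\bigcup_{j=1}^{P}[\widehat{E}_j^-,\widehat{E}_j^+]$ with the single interval $[\widehat{E}_1^-,\widehat{E}_P^+]$.

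There is no genuine obstacle here: the argument is just the min-max principle together with bookkeeping of band edges, and the one line that deserves a moment's care — that $V$ enters as a bounded self-adjoint operator of norm exactly $\|V\|_{\infty}$ on the finite-dimensional fiber space — is transparent once one uses the space basis rather than the momentum representation. (Indeed, the whole lemma is a special case of Theorem~\ref{thm:bandoverlap}.)
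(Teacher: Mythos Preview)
Your proof is correct and follows the same approach as the paper: the min-max principle \eqref{eq:minmax} for the two-sided eigenvalue estimate, followed by the observation that under the stated bound on $\|V\|_{\infty}$ the perturbed band edges satisfy $\widehat{E}_{j+1}^- \le \widehat{E}_j^+$, so the spectrum is an interval. The paper's proof is simply a two-line sketch of exactly this, and your version fills in the details (the norm identification of $V$ on the fiber space, the sign bookkeeping for $H+V$ versus $H-V$, and the explicit appeal to Theorem~\ref{thm:numgapsfinite}) without changing the argument.
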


\begin{proof}
 The first claim follows from the min-max principle
 \eqref{eq:minmax}. The second
 one from the first, since it implies
 $E_j^{+} - E_{j+1}^{-} \geq 0$
 which exactly says that $\sigma(H+V)$ is an interval.
\end{proof}

\begin{proof}[Proof of Theorem~\ref{thm:bandoverlap}]
 This follows from the previous lemma.
\end{proof}

The following lemma will be used in our construction
of limit-periodic potentials.

\begin{lemma}\label{lem:uniformoverlap}
 Let $H$ be a $\ul{p}$-periodic Schr\"odinger operator
 and $\mathcal{V}$ a compact set of $\ul{p}$-periodic
 potentials. Assume that for every $V\in\mathcal{V}$, the bands of
 $H + V$ are overlapping. Then there exists $\delta > 0$
 such that for all such $V$, the bands of $H + V$
 are $\delta$-overlapping.
\end{lemma}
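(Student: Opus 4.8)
\textbf{Proof proposal for Lemma~\ref{lem:uniformoverlap}.}

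The plan is to argue by compactness, exploiting the continuity of the band edges as functions of the potential. First I would make precise the relevant continuity: for fixed $\ul{p}$ and fixed $\ul{\theta}\in\mathbb{V}$, the eigenvalues $E_j(\ul{\theta},V)$ of $\widehat{H}_{\ul{p},\ul{\theta}}+\widehat{V}\ast$ depend continuously (in fact Lipschitz, by the min-max bound $|E_j(\ul\theta,V)-E_j(\ul\theta,V')|\le\|V-V'\|_\infty$ established in the lemma preceding Theorem~\ref{thm:bandoverlap}) on $V$. Taking the max and min over the compact set $\mathbb{V}$ preserves this, so the band edges
\be
 E_j^\pm(V) = \genfrac{}{}{0pt}{}{\max}{\min}_{\ul\theta\in\mathbb{V}} E_j(\ul\theta,V)
\ee
are continuous functions of $V\in\mathcal{V}$, and hence so is the overlap functional
\be
 g(V) = \min_{1\le j\le P-1}\bigl(E_{j+1}^-(V) - E_j^+(V)\bigr).
\ee

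Next I would observe that the hypothesis ``the bands of $H+V$ are overlapping'' means precisely, by the definition just given, that $g(V)>0$ for every $V\in\mathcal{V}$; indeed $H+V$ has $\delta$-overlapping bands with $\delta = g(V)$, and it is overlapping iff this quantity is strictly positive. Since $g:\mathcal{V}\to\R$ is continuous and $\mathcal{V}$ is compact, $g$ attains its minimum at some $V_0\in\mathcal{V}$, and $\delta := g(V_0) = \min_{V\in\mathcal{V}} g(V) > 0$ because $g(V_0)>0$ by hypothesis. Then for every $V\in\mathcal{V}$ we have $g(V)\ge\delta>0$, i.e. $E_{j+1}^-(V)-E_j^+(V)\ge\delta$ for all $j$, which is exactly the assertion that the bands of $H+V$ are $\delta$-overlapping.

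The only point requiring a little care — and the step I would expect to be the main (minor) obstacle — is the continuity of $V\mapsto E_j^\pm(V)$, since one is composing the eigenvalue map with a $\min$ or $\max$ over the parameter $\ul{\theta}$. This is handled by the uniform Lipschitz estimate: since $|E_j(\ul\theta,V)-E_j(\ul\theta,V')|\le\|V-V'\|_\infty$ holds with a constant independent of $\ul\theta$, the inequality survives taking $\max_{\ul\theta}$ or $\min_{\ul\theta}$, giving $|E_j^\pm(V)-E_j^\pm(V')|\le\|V-V'\|_\infty$ directly, and then $g$ is a finite min of differences of such functions, hence Lipschitz as well. I should also note that ``compact set of $\ul{p}$-periodic potentials'' here is with respect to $\|\cdot\|_\infty$ on the finite-dimensional space of $\ul{p}$-periodic functions (all norms equivalent), so compactness is exactly closed-and-bounded and the extreme value theorem applies without subtlety.
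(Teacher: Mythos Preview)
Your proposal is correct and follows essentially the same approach as the paper: define the overlap functional $g(V)=\min_j(E_{j+1}^-(V)-E_j^+(V))$, observe it is continuous in $V$ (via continuity of the eigenvalue maps and of $\min/\max$ over the compact $\mathbb{V}$), and conclude by compactness of $\mathcal{V}$ that $\inf_{V\in\mathcal V}g(V)>0$. Your write-up simply supplies the Lipschitz justification for the continuity step that the paper leaves as ``easy to see''.
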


\begin{proof}
 It is easy to see that the map $V \mapsto E_j(\ul{\theta}, V)$
 is continuous. Hence, also the map
 \[
  g: V \mapsto \inf_{j} \left(E_{j+1}^-(V) - E_j^+(V)\right).
 \]
 is continuous. Since $\mathcal{V}$ is compact
 and $g(V)  > 0$, it follows that $\inf_{V\in\mathcal{V}} g(V) > 0$
 which is the claim.
\end{proof}

\subsection{An upper bound on the length of bands}

We have that

\begin{theorem}\label{thm:bandsize}
 Let $H$ be $\ul{p}$-periodic, then the length of bands
 is bounded by
 \be
  E_j^+ - E_j^- \leq 4 \pi \sum_{j=1}^{d} \frac{1}{p_j}.
 \ee
\end{theorem}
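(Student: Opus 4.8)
The plan is to exploit the analytic‑eigenvalue parametrization from Proposition~\ref{prop:lamjnonconst} together with the periodicity Lemma~\ref{lem:hatHper}, and to compare $\widehat H_{\ul p,\ul\theta}$ with the free operator along a single coordinate direction. Fix a coordinate $i$ and write $\ul\theta = (\theta_i,\ul\theta^{\perp})$. By Lemma~\ref{lem:hatHper}, $\widehat H_{\ul p,\ul\theta}$ is unitarily equivalent to $\widehat H_{\ul p,\ul\theta + \frac1{p_i}b_i}$, so the spectrum of $A(t) = \widehat H_{\ul p,(t,\ul\theta^\perp)}$ is $\frac1{p_i}$‑periodic in $t$; consequently the (unordered) eigenvalue curves $\{\lambda_\ell(t)\}$, viewed as a set, return to themselves after $t \mapsto t + \frac1{p_i}$. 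The key estimate is that $A(t)$ differs from $A(0)$ only through the diagonal term $\sum_j 2\cos(2\pi(\ell_j+\theta_j))$, and changing $\theta_i$ to $\theta_i + t$ perturbs this diagonal by a Hermitian matrix of operator norm at most $2\bigl|\cos(2\pi(\ell_i+\theta_i+t)) - \cos(2\pi(\ell_i+\theta_i))\bigr| \le 2\pi |t|$ for $|t|\le \frac1{p_i}$ (using $|\cos a - \cos b|\le|a-b|$ and the factor $2\pi$ from the argument $2\pi t$, restricted to $|2\pi t|\le 2\pi/p_i$, where the bound $2\cdot 2\pi/p_i$ holds more simply). So by min–max (Lemma~\ref{lem:evorder}), $|E_j(\theta_i,\ul\theta^\perp) - E_j(\theta_i',\ul\theta^\perp)| \le \tfrac{4\pi}{p_i}$ whenever $|\theta_i-\theta_i'|\le \frac1{p_i}$.

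Now I would argue as follows. By Lemma~\ref{lem:evorder} the functions $E_j$ are continuous on the (compact, connected) torus block $\mathbb V$, so $E_j^- = E_j(\ul\theta^{(-)})$ and $E_j^+ = E_j(\ul\theta^{(+)})$ are attained. Move from $\ul\theta^{(-)}$ to $\ul\theta^{(+)}$ one coordinate at a time: in coordinate $i$ the two values $\theta_i^{(-)}$ and $\theta_i^{(+)}$ both lie in $[0,\frac1{p_i})$, hence are within $\frac1{p_i}$ of each other, so changing that coordinate alone changes $E_j$ by at most $\frac{4\pi}{p_i}$ by the previous paragraph. Summing the telescoping differences over $i=1,\dots,d$ gives
\[
 E_j^+ - E_j^- \;\le\; \sum_{i=1}^{d} \frac{4\pi}{p_i} \;=\; 4\pi\sum_{i=1}^d \frac1{p_i},
\]
which is exactly the claimed bound. (One must be a little careful that the ordered eigenvalue $E_j$, not just the unordered collection, satisfies the Lipschitz estimate in each coordinate; this is automatic from the min–max formula \eqref{eq:minmax}, since the quadratic form $\langle\psi,\widehat H_{\ul p,\ul\theta}\psi\rangle$ changes by at most $\frac{4\pi}{p_i}$ uniformly in $\|\psi\|=1$ under the stated move of $\theta_i$.)

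The main obstacle is pinning down the Lipschitz constant cleanly: the perturbation of the diagonal entry $2\cos(2\pi(\ell_i+\theta_i))$ as $\theta_i$ ranges over an interval of length $\le \frac1{p_i}$ has amplitude at most $2\cdot 2\sin(\pi/p_i) \le 4\pi/p_i$... wait, that would give $4\pi/p_i$ per coordinate only if one is generous; more sharply $|2\cos a - 2\cos b| = 4|\sin\frac{a+b}2\sin\frac{a-b}2|\le 2|a-b|$, and with $|a-b| = 2\pi|\theta_i-\theta_i'|\le 2\pi/p_i$ this is $\le 4\pi/p_i$. So the coordinatewise operator‑norm bound on the diagonal perturbation is $4\pi/p_i$, and min–max promotes it to the same bound on $|E_j-E_j|$; summing over $i$ yields the theorem. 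The only genuine care needed is that this is the bound on the \emph{full} diagonal change and that min–max gives a two‑sided estimate on each individual ordered eigenvalue, both of which are routine.
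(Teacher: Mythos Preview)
Your argument is correct and follows the same underlying idea as the paper: bound the change in the operator $\widehat H_{\ul p,\ul\theta}$ as $\ul\theta$ varies over $\mathbb V$, and convert this into a bound on the ordered eigenvalues. The paper implements this by moving along the straight line from $\ul\theta_-$ to $\ul\theta_+$, invoking the analytic eigenvalue parametrization to write $E_j^+-E_j^-$ as an integral of $\lambda_\ell'(t)=\langle\psi,A'(t)\psi\rangle$, and bounding $\|A'(t)\|\le 4\pi\sum_j 1/p_j$ via $\|\partial_{\theta_j}\widehat H_{\ul p,\ul\theta}\|\le 4\pi$. You instead move coordinate by coordinate along an axis-aligned path and appeal directly to min--max (Weyl's inequality) with the explicit diagonal perturbation bound $|2\cos a-2\cos b|\le 2|a-b|\le 4\pi/p_i$. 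Your route is slightly more elementary in that it avoids the analytic perturbation theory altogether; the paper's route is slightly cleaner in that it handles all coordinates at once. Both give exactly the same constant. (Incidentally, neither Proposition~\ref{prop:lamjnonconst} nor Lemma~\ref{lem:hatHper}, which you mention in your plan, is actually needed for your argument; the bound $|\theta_i^{(+)}-\theta_i^{(-)}|<1/p_i$ comes for free from $\mathbb V=\prod_j[0,1/p_j)$.)
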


\begin{proof}
 Let $E_j^{\pm} = E_j(\ul{\theta}_\pm)$.
 Since $\ul{\theta}_{\pm} \in \mathbb{V}$, we have
 $|(\ul{\theta}_- - \ul{\theta}_+)_j| \leq \frac{1}{p_j}$.
 Define the family of operators
 \[
  A(t) = \widehat{H}_{\ul{p}, \ul{\theta}_- + t(\ul{\theta}_+ - \ul {\theta}_-)},
  \quad t\in [0,1]
 \]
 which is clearly analytic. Denote by
 $\lambda_{\ell}(t)$ an analytic parametrization
 of the eigenvalues of $A(t)$. We have that
 $\lambda_{\ell}'(t) = \spr{\psi}{A'(t) \psi}$,
 where $\psi$ is any normalized solution of
 $A(t) \psi = \lambda_{\ell}(t) \psi$.
 Hence, we have that $|\lambda_{\ell}'(t)|\leq\|A'(t)\|$
 and since $E_j'(t) = \lambda_{\ell}'(t)$
 for some $\ell = \ell(t)$ except at finitely many
 points, we obtain that
 \[
  |E_j^{+} - E_{j}^-|  \leq \sup_{t\in [0,1]} \|A'(t)\|.
 \]
 One can easily compute that
 $\|\partial_{\theta_j} \widehat{H}_{\ul{p},\ul{\theta}}\| \leq 4\pi$.
 Hence, we obtain that
 \[
  A'(t) = \sum_{j=1}^{d} (\ul{\theta}_+-\ul{\theta}_-)_j 
   \cdot \partial_{\theta_j} \widehat{H}_{\ul{p},\ul{\theta}}
 \]
 satisfies $\|A'(t)\| \leq 4\pi\sum_{j=1}^{d} \frac{1}{p_j}$,
 which is the claim.
\end{proof}

%%%%%%%%%%%%%%%%%%%%%%%%%%%%%%%%%%%%%%%%%%%%%%%%5
%
%
%

\section{The integrated density of states and spectral measures}
\label{sec:ids}

The goal of this section is to investigate two quantities
related to the spectrum of $H$: the integrated density of
states and the spectral measures.

\subsection{The integrated density of states}

Let $\ell \geq 1$ and denote by $\Lambda_\ell$, the 
rectangle
\be
 \Lambda_{\ell} = \times_{j=1}^{d} \{1, \dots, \ell \cdot p_j\}
\ee
and by $\#\Lambda_\ell$ the number of elements in $\Lambda_\ell$.
We denote by $H^{\Lambda_{\ell}}$ the restriction of $H$
to $\ell^2(\Lambda_{\ell})$ and by 
\be
 \tr\left(P_{(-\infty, E)}\left(H^{\Lambda_{\ell}}\right)\right)
\ee
the number of eigenvalues of $H^{\Lambda_{\ell}}$ less than $E$.

\begin{theorem}\label{thm:ids1}
 The limit
 \be\label{eq:defids}
  k(E) = \lim_{\ell\to\infty} \frac{1}{\#\Lambda_{\ell}}
  \tr\left(P_{(-\infty, E)}\left(H^{\Lambda_{\ell}}\right)\right)
 \ee
 exists.
 Furthermore with $E_j(\ul{\theta})$ as in Lemma~\ref{lem:evorder},
 we have
 \be
  k(E) = \frac{1}{P} \int_{\mathbb{V}}
  \#\{j:\quad E_j(\ul{\theta}) \leq E\} d\ul{\theta}.
 \ee
\end{theorem}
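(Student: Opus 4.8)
The plan is to prove this in two stages: first establish the integral formula by working within the Floquet--Bloch picture, and then show the finite-volume counting function converges to the same quantity.

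\medskip

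\textbf{Stage 1: the integral formula via direct integral decomposition.}
The operator $H$ restricted to a period cell with $\ul\theta$ boundary conditions has been identified in Subsection~\ref{ss:spacebasis} with $H_{\ul p, \ul\theta}$, whose eigenvalues are exactly the $E_j(\ul\theta)$. The Floquet--Bloch decomposition $WHW^{-1} = \widehat H_{\ul p}$ expresses $H$ as the direct integral $\int_{\mathbb V}^{\oplus} \widehat H_{\ul p, \ul x}\, d\ul x$ (after normalizing the measure on $\mathbb V$ to be a probability measure, i.e. multiplying Lebesgue measure by $P$). The spectral projection $P_{(-\infty,E)}(H)$ then decomposes fiberwise as $\int_{\mathbb V}^{\oplus} P_{(-\infty,E)}(\widehat H_{\ul p, \ul x})\, d\ul x$, and for each fiber $\tr\bigl(P_{(-\infty,E)}(\widehat H_{\ul p, \ul x})\bigr) = \#\{j : E_j(\ul x) \leq E\}$ since $\widehat H_{\ul p, \ul\theta}$ is a $P\times P$ matrix with these eigenvalues. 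Defining $k(E)$ by the stated integral is therefore natural; the real content is that this equals the finite-volume limit.

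\medskip

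\textbf{Stage 2: convergence of the finite-volume trace.}
Here I would use Lemma~\ref{lem:l2theta}. Decompose $\ell^2(\Lambda_\ell)$ in a way that is comparable to the direct sum over characters. Concretely, by Lemma~\ref{lem:l2theta} applied with $\ul d = (\ell, \dots, \ell)$ and $\ul\theta = 0$, the space $\ell^2_{\ell\ast\ul p, 0}(\Z^d)$ decomposes as $\bigoplus_{\ul\varphi \in \Phi} \ell^2_{\ul p, \ul\varphi}(\Z^d)$ with $\Phi = \{(\tfrac{\ell_j}{p_j \ell})_j : \ul\ell \in \times_j\{0,\dots,\ell-1\}\}$, so the eigenvalues of the periodic-boundary-condition operator on $\Lambda_\ell$ are precisely $\bigcup_{\ul\varphi \in \Phi} \{E_j(\ul\varphi)\}_{j=1}^P$, and $\Phi$ is a uniform grid of $\ell^d$ points in $\mathbb V$. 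Hence the counting function for $H$ with periodic boundary conditions on $\Lambda_\ell$ is exactly the Riemann sum
\be
 \frac{1}{\ell^d}\sum_{\ul\varphi\in\Phi} \#\{j : E_j(\ul\varphi)\leq E\},
\ee
which converges to $\frac{1}{P}\int_{\mathbb V}\#\{j: E_j(\ul\theta)\leq E\}\, d\ul\theta$ whenever $E$ is not one of the countably many values where the integrand jumps on a set of positive measure — and by Theorem~\ref{thm:ac} (pure absolute continuity) the level sets $E_j^{-1}(\{E\})$ have measure zero for every $E$, so convergence holds for all $E$, after normalizing by $\#\Lambda_\ell = \ell^d P$.

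\medskip

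\textbf{From periodic to Dirichlet boundary conditions.}
The remaining step is to replace periodic boundary conditions on $\Lambda_\ell$ by the Dirichlet (simple restriction) operator $H^{\Lambda_\ell}$ appearing in \eqref{eq:defids}. The two operators differ by a perturbation supported on the boundary of $\Lambda_\ell$, which has rank $O(\ell^{d-1})$; by the min-max principle their eigenvalue counting functions differ by at most $C\ell^{d-1}$, which is $o(\ell^d) = o(\#\Lambda_\ell)$. Therefore the Dirichlet counting function has the same limit. I expect this boundary-perturbation bookkeeping, together with making the identification of the periodic finite-volume spectrum with the grid $\Phi$ fully rigorous, to be the main technical obstacle; everything else is the standard direct-integral and Riemann-sum machinery. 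One should be slightly careful that the rank bound gives a two-sided bound $|N_{\mathrm{per}}(E) - N_{\mathrm{Dir}}(E)| \leq C\ell^{d-1}$ uniformly in $E$, so that dividing by $\#\Lambda_\ell$ and sending $\ell\to\infty$ kills the discrepancy.
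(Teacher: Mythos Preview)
Your proposal is correct and follows essentially the same route as the paper: compare $H^{\Lambda_\ell}$ to the periodic-boundary operator $H_{\ell\cdot\ul p,\ul\theta}$ via a rank-$O(\ell^{d-1})$ perturbation, invoke Lemma~\ref{lem:l2theta} to write the spectrum of the latter as the union over the grid $\Phi$ of the fiber spectra $\{E_j(\ul\varphi)\}$, and then pass from the Riemann sum to the integral using that the level sets $E_j^{-1}(\{E\})$ have measure zero (the paper cites Theorem~XIII.83(e) of \cite{rs4} together with Theorem~\ref{thm:ac} for this last point). The only differences are cosmetic: the paper does the boundary-condition comparison first rather than last, and your Stage~1 direct-integral discussion is motivational rather than part of the argument.
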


By Lemma~\ref{lem:hatHper}, we have that
\be
 \tr\left(P_{(-\infty, E)}\left(H^{\Lambda_{\ell}}\right)\right)
 =\tr\left(P_{(-\infty, E)}\left(H^{\Lambda_{\ell}+\ul{n}\ast\ul{p}}\right)\right)
\ee
for any $\ul{n}\in\Z^d$. Hence, the limit in Theorem~\ref{thm:ids1}
is somewhat more general than we claim here.
We first prove 

\begin{lemma}
 Let $\ul{\theta}\in\mathbb{V}$. Then
 \be
  \lim_{\ell\to\infty} \frac{1}{\#\Lambda_{\ell}}
   \left(
    \tr\left(P_{(-\infty, E)}\left(H^{\Lambda_{\ell}}\right)\right)-
     \tr\left(P_{(-\infty, E)}\left(H_{\ell\cdot\ul{p}, \ul{\theta}}\right)\right)
      \right) = 0.
 \ee
\end{lemma}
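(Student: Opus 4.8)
The strategy is to compare the eigenvalue count of the ``free boundary'' restriction $H^{\Lambda_\ell}$ with that of the twisted restriction $H_{\ell\cdot\ul p,\ul\theta}$ via a rank argument. Both operators act on spaces of dimension $\#\Lambda_\ell = \ell^d P$: the operator $H^{\Lambda_\ell}$ on $\ell^2(\Lambda_\ell)$ with Dirichlet (zero) boundary conditions, and $H_{\ell\cdot\ul p,\ul\theta}$ on $\ell^2(\mathbb P_{\ell\cdot\ul p}) \cong \ell^2_{\ell\cdot\ul p,\ul\theta}(\Z^d)$ with $\theta$-twisted boundary conditions, as described in Subsection~\ref{ss:spacebasis}. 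The key observation is that the two operators differ only in the matrix entries of $\Delta$ that couple the ``left'' face $n_j = 1$ to the ``right'' face $n_j = \ell p_j$ in each of the $d$ coordinate directions; the potential term $V$ is identical in both, since $V$ is a multiplication operator unaffected by boundary conditions. Hence $H^{\Lambda_\ell} - H_{\ell\cdot\ul p,\ul\theta}$ (viewed on the same space via the natural identification) is a finite-rank perturbation whose rank I would bound by the total number of such boundary-coupling entries.

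\smallskip
First I would make the identification precise: fix the identification $\ell^2(\Lambda_\ell) \cong \ell^2(\mathbb P_{\ell\cdot\ul p})$ as sets of functions on $\times_{j=1}^d\{1,\dots,\ell p_j\}$, so that both $H^{\Lambda_\ell}$ and $H_{\ell\cdot\ul p,\ul\theta}$ become self-adjoint operators on this single finite-dimensional Hilbert space. For direction $j$, the operator $\Delta^{\Lambda_\ell}$ simply drops the hopping terms across the face, while $\Delta^j_{\ell\cdot\ul p,\ul\theta}$ (from the formula in Subsection~\ref{ss:spacebasis}, with $\ul p$ replaced by $\ell\cdot\ul p$) adds back the terms $e(\pm\theta_j)$ connecting $n_j=1$ and $n_j=\ell p_j$. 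The difference is therefore supported on pairs of sites lying on opposite faces in some coordinate direction; the number of such sites is $\sum_{j=1}^d 2\prod_{i\neq j}(\ell p_i) = O(\ell^{d-1})$. Consequently
\be
 \mathrm{rank}\left(H^{\Lambda_\ell} - H_{\ell\cdot\ul p,\ul\theta}\right) \leq C_{\ul p}\,\ell^{d-1}
\ee
for a constant depending only on $\ul p$ and $d$.

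\smallskip
Next I would invoke the standard eigenvalue interlacing (min-max) bound: if two self-adjoint operators $A,B$ on a finite-dimensional space satisfy $\mathrm{rank}(A-B)\le r$, then for every $E$,
\be
 \left|\tr P_{(-\infty,E)}(A) - \tr P_{(-\infty,E)}(B)\right| \le r.
\ee
Applying this with $A = H^{\Lambda_\ell}$, $B = H_{\ell\cdot\ul p,\ul\theta}$, and $r = C_{\ul p}\,\ell^{d-1}$ gives
\be
 \left|\tr P_{(-\infty,E)}(H^{\Lambda_\ell}) - \tr P_{(-\infty,E)}(H_{\ell\cdot\ul p,\ul\theta})\right| \le C_{\ul p}\,\ell^{d-1}.
\ee
Dividing by $\#\Lambda_\ell = \ell^d P$ and letting $\ell\to\infty$, the right-hand side tends to $0$, which is exactly the assertion of the lemma.

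\smallskip
\textbf{Main obstacle.} The only delicate point is making the rank bound airtight: one must be careful that the identification $\ell^2(\Lambda_\ell)\cong\ell^2_{\ell\cdot\ul p,\ul\theta}(\Z^d)$ really does leave the potential term and all the ``interior'' hopping terms of $\Delta$ untouched, so that the difference operator is genuinely supported only on the boundary faces and the rank is $O(\ell^{d-1})$ rather than $O(\ell^d)$. This is a routine but slightly tedious bookkeeping matter; once it is set up correctly, the interlacing inequality does all the remaining work.
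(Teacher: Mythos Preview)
Your proof is correct and follows essentially the same approach as the paper: both argue that $H^{\Lambda_\ell}$ and $H_{\ell\cdot\ul p,\ul\theta}$ differ by a perturbation of rank $O(\ell^{d-1})$ (the paper writes the explicit constant $r_\ell = 2\ell^{d-1}\sum_j\prod_{k\neq j}p_k$), and then use the rank--interlacing bound to conclude the normalized difference is $O(1/\ell)$. In fact you spell out the interlacing step more explicitly than the paper, which simply records the rank and immediately asserts the $O(1/\ell)$ conclusion.
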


\begin{proof}
 $H^{\Lambda_{\ell}}$  and $H_{\ell \cdot \ul{p}, \ul{\theta}}$ differ
 by a rank 
 \[
  r_{\ell} = 2 \ell^{d-1} \sum_{j=1}^{d} \prod_{k \neq j} p_j
 \]
 perturbation. Hence,
 \[
  \frac{1}{\#\Lambda_{\ell}} \left(
   \tr\left(P_{(-\infty, E)}\left(H^{\Lambda_{\ell}}\right)\right)-
    \tr\left(P_{(-\infty, E)}\left(H_{\ell\cdot\ul{p}, \ul{\theta}}\right)\right)
     \right) = O(\frac{1}{\ell})
 \]
 and the claim follows.
\end{proof}

By Lemma~\ref{lem:l2theta}, we have that
\be
 \tr\left(P_{(-\infty, E)}\left(H_{\ell\cdot\ul{p}, \ul{\theta}}\right)\right)
  = \sum_{\ell \ul{\varphi} = \ul{\theta} \pmod{1}}
   \tr\left(P_{(-\infty, E)}\left(H_{\ul{p}, \ul{\varphi}}\right)\right).  
\ee

\begin{proof}[Proof of Theorem~\ref{thm:ids1}]
 By the previous two results, it suffices to show that
 \[
  \lim_{\ell\to\infty} \frac{1}{l^d} \sum_{\ell \ul{\varphi} = \ul{\theta} \pmod{1}}
   \tr\left(P_{(-\infty, E)}\left(H_{\ul{p}, \ul{\varphi}}\right)\right)
  = \int_{\mathbb{V}} 
     \#\{j:\quad E_j(\ul{\theta}) \leq E\} d\ul{\theta}.
 \] 
 For this, observe that
 \[
  \tr\left(P_{(-\infty, E)}\left(H_{\ul{p}, \ul{\varphi}}\right)\right) 
   = \#\{j:\quad E_j(\ul{\varphi}) < E\}.
 \]
 Furthermore, by Theorem~XIII.83.(e) in \cite{rs4} and
 Theorem~\ref{thm:ac} the measure of
 $\ul{\varphi} \in \mathbb{V}$ such that
 $E_j(\ul{\varphi}) = E$ for some $j$, is $0$. Hence,
 the result follows by a convergence theorem for integrals.
\end{proof}

The results of Craig--Simon \cite{cs1} imply that
the integrated density of states is log H\"older
continuous. Furthermore, it follows from
Theorem~\ref{thm:ac} that it is absolutely
continuous. It would be interesting to obtain
further reguarity results, see the discussion
in the next subsection.

The definition of the limit in \eqref{eq:defids}
is not as general as possible. One can show that
if $\Lambda_{t}$ is a Folner sequence for $\Z^d$,
then
\be
 k(E) = \lim_{t\to\infty} \frac{1}{\#\Lambda_t}
 \tr\left(P_{(-\infty,E)}(H^{\Lambda_t})\right).
\ee

The function $k$ defined in Theorem~\ref{thm:ids1}
is clearly increasing. Hence, there exists a measure $\nu$
such that $k(E) = \nu((-\infty,E))$. This measure
is called {\em density of states}.

\begin{lemma}\label{lem:ids2}
 For $\im(z) > 0$, we have
 \be
  \int \frac{d\nu(t)}{t - z} = \frac{1}{P} \sum_{j=1}^{P}
   \int_{\mathbb{V}} \frac{d\ul{\theta}}{E_j(\ul{\theta}) - z}
 \ee
 where $P=\prod_{j=1}^{d} p_j$.
\end{lemma}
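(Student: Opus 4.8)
The plan is to reduce the Borel (Stieltjes) transform of $\nu$ to that of the averaged spectral data for the fibers $\widehat H_{\ul p,\ul\theta}$ and then recognize the resulting quantity as the right-hand side. First I would recall, from Theorem~\ref{thm:ids1}, that $k(E)=\nu((-\infty,E))$ with
\be
 \nu((-\infty,E)) = \frac{1}{P}\int_{\mathbb{V}} \#\{j:\ E_j(\ul\theta)\leq E\}\, d\ul\theta
  = \frac{1}{P}\sum_{j=1}^{P} \int_{\mathbb{V}} \mathbf{1}_{[E_j(\ul\theta),\infty)}(E)\, d\ul\theta,
\ee
so that $\nu$ is literally the average over $\ul\theta\in\mathbb{V}$ (with respect to normalized Lebesgue measure on $\mathbb{V}$, whose total mass is $1/P$ in the scaling used here) of the measures $\nu_{\ul\theta} = \sum_{j=1}^{P}\delta_{E_j(\ul\theta)}$, each of which is the sum of $P$ unit point masses at the eigenvalues of $\widehat H_{\ul p,\ul\theta}$ counted with multiplicity. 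In other words, for every bounded continuous (or just bounded Borel) $g$,
\be
 \int g(t)\, d\nu(t) = \frac{1}{P}\int_{\mathbb{V}} \sum_{j=1}^{P} g(E_j(\ul\theta))\, d\ul\theta
  = \frac{1}{P}\int_{\mathbb{V}} \tr\, g\!\left(\widehat H_{\ul p,\ul\theta}\right) d\ul\theta.
\ee

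Next I would apply this identity with $g(t) = (t-z)^{-1}$ for fixed $z$ with $\im(z)>0$. This $g$ is bounded and continuous on $\R$, so the identity applies directly and gives
\be
 \int \frac{d\nu(t)}{t-z} = \frac{1}{P}\int_{\mathbb{V}} \tr\left(\left(\widehat H_{\ul p,\ul\theta}-z\right)^{-1}\right) d\ul\theta
  = \frac{1}{P}\int_{\mathbb{V}} \sum_{j=1}^{P} \frac{1}{E_j(\ul\theta)-z}\, d\ul\theta.
\ee
Interchanging the finite sum over $j$ with the integral over $\mathbb{V}$ (legitimate since each summand is bounded on the compact set $\mathbb{V}$, the denominators being bounded away from $0$ because $\im(z)>0$) yields exactly
\be
 \int \frac{d\nu(t)}{t-z} = \frac{1}{P}\sum_{j=1}^{P}\int_{\mathbb{V}} \frac{d\ul\theta}{E_j(\ul\theta)-z},
\ee
which is the claim. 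The only point requiring a word of care is the normalization of Lebesgue measure on $\mathbb{V} = \times_{j=1}^d[0,1/p_j)$: since $|\mathbb{V}| = 1/P$, the averaging weight $\frac1P$ appearing in Theorem~\ref{thm:ids1} is precisely $1/|\mathbb{V}|$ times $|\mathbb{V}|$, so the factor $\frac1P$ in the statement is consistent with integrating over $\mathbb{V}$ against ordinary Lebesgue measure rather than a probability measure; I would simply note this bookkeeping so the constants match.

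I do not expect any genuine obstacle here — the lemma is essentially a restatement of Theorem~\ref{thm:ids1} after pairing against the resolvent kernel $(t-z)^{-1}$ and using linearity of the trace. The only mild subtlety is justifying the passage from the distribution-function form of $k(E)$ in Theorem~\ref{thm:ids1} to the measure-pairing form; this is standard (the measure $\nu$ defined by $\nu((-\infty,E))=k(E)$ is a finite positive Borel measure, being a finite average of purely atomic finite measures, and pairing it with the bounded continuous function $(t-z)^{-1}$ is unproblematic). Alternatively, one could avoid even this by noting that both sides of the claimed identity are analytic in $z$ on the upper half-plane and computing Stieltjes inversion to recover $k$, but the direct argument above is cleaner.
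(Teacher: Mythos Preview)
Your argument is correct. Both your proof and the paper's rest on the formula for $k(E)$ from Theorem~\ref{thm:ids1} together with a Fubini-type interchange, so the approaches are close cousins; the difference is that the paper first integrates by parts to write $\int\frac{d\nu(t)}{t-z}=-\int\frac{k(t)\,dt}{(t-z)^2}$, then substitutes the expression for $k$ and swaps the integrals, whereas you recognize directly that $\nu$ is the $\ul\theta$-average of the atomic measures $\sum_j\delta_{E_j(\ul\theta)}$ and pair with the bounded continuous function $(t-z)^{-1}$. Your route is a shade more direct, since it bypasses the integration by parts and the subsequent computation; the paper's route has the minor advantage of never needing to remark that $\nu$ coincides with that average (which relies on $k$ being continuous, i.e.\ on the measure-zero level sets of the $E_j$). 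Either way the content is the same.
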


\begin{proof}
 We have that $\int \frac{d\nu(t)}{t - z} = - \int \frac{k(t) dt}{(t -z)^2}$ and that
 \[
  k(E) = \frac{1}{P} \sum_{j=1}^{P} \int_{\mathbb{V}}
   \chi_{(-\infty, E_j(\ul{\theta}))}(E) d\ul{\theta}.
 \]
 The claim then follows by Fubini and a quick computation.
\end{proof}

\subsection{Spectral measures}

By Theorem~\ref{thm:ac} all the spectral measures
are absolutely continuous. The goal of this section
will be to give more quantitative information.
Given $u \in \ell^2(\Z^d)$, we denote by $\mu^{u}$ the measure
satisfying
\be
 \spr{u}{(H - z)^{-1} u} = \int \frac{d\mu^{u}(t)}{t - z}
\ee
for $\im(z) > 0$. 
The main result of this section is

\begin{theorem}\label{thm:specmeasac}
 Let $H$ be $\ul{p}$-periodic and $u \in \ell^2(\Z^d)$
 with finite support.
 Then the spectral measure $\mu^u$ is absolutely continuous
 with respect to the density of states $\nu$.

 Furthermore there exists $C = C(\ul{p}, u) > 0$, such that
 \be
  \left\|\frac{d\mu^{u}}{d\nu}\right\|_{L^{\infty}(\R)}
   \leq C.
 \ee
\end{theorem}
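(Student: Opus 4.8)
The plan is to use the Floquet--Bloch decomposition $W H W^{-1} = \widehat H_{\ul p}$ and express the spectral measure of $u$ as an integral over $\ul\theta \in \mathbb V$ of the ``fibered'' spectral data, then compare with the formula for $\nu$ from Lemma~\ref{lem:ids2}. Concretely, writing $v = W u$, the resolvent identity $\spr{u}{(H-z)^{-1}u} = \int_{\mathbb V} \spr{v(\ul x,\cdot)}{(\widehat H_{\ul p,\ul x}-z)^{-1} v(\ul x,\cdot)}\, d\ul x$ combined with the eigenexpansion of the finite-dimensional operator $\widehat H_{\ul p,\ul x}$ on $\ell^2(\mathbb B)$ gives
\be
 \spr{u}{(H-z)^{-1}u} = \sum_{j=1}^{P} \int_{\mathbb V}
  \frac{|\spr{v(\ul x,\cdot)}{\psi_j(\ul x)}|^2}{E_j(\ul x)-z}\, d\ul x,
\ee
where $\psi_j(\ul x)$ is a normalized eigenvector of $\widehat H_{\ul p,\ul x}$ for $E_j(\ul x)$. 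Thus $\mu^u$ is the pushforward under the maps $\ul x \mapsto E_j(\ul x)$ of the measures $|\spr{v(\ul x,\cdot)}{\psi_j(\ul x)}|^2\, d\ul x$ on $\mathbb V$, while by Lemma~\ref{lem:ids2} the density of states $\nu$ is the pushforward of $\frac1P\, d\ul x$ under the same maps.

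The first key step is the bound from Lemma~\ref{lem:propW}: since $u$ has finite support, $|Wu(\ul x,\ul k)| \leq (\#\mathrm{supp}(u)/P)^{1/2}\|u\|$ pointwise, so $|\spr{v(\ul x,\cdot)}{\psi_j(\ul x)}|^2 \leq \#\mathrm{supp}(u)\cdot\|u\|^2/P \cdot \|\psi_j(\ul x)\|^2 \cdot (\text{const})$; more carefully, $|\spr{v(\ul x,\cdot)}{\psi_j(\ul x)}| \le \|v(\ul x,\cdot)\|\,\|\psi_j(\ul x)\| = \|v(\ul x,\cdot)\|$, and $\|v(\ul x,\cdot)\|^2 = \sum_{\ul k}|Wu(\ul x,\ul k)|^2 \le \#\mathrm{supp}(u)^2\|u\|^2/P$ by the same Cauchy--Schwarz estimate applied coordinatewise. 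Hence each weight $w_j(\ul x) := |\spr{v(\ul x,\cdot)}{\psi_j(\ul x)}|^2$ is bounded above by a constant $C_0 = C_0(\ul p,u)$. Therefore, for any Borel set $A \subseteq \R$,
\be
 \mu^u(A) = \sum_{j=1}^{P} \int_{E_j^{-1}(A)} w_j(\ul x)\, d\ul x
  \leq C_0 \sum_{j=1}^{P} |E_j^{-1}(A)| = C_0 \cdot P \cdot \nu(A),
\ee
which both establishes absolute continuity of $\mu^u$ with respect to $\nu$ and yields the uniform bound $\|d\mu^u/d\nu\|_{L^\infty} \le C_0 P =: C(\ul p,u)$.

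The main technical care needed is in justifying the fiberwise eigenexpansion so that the $E_j(\ul x)$ and the weights are genuinely measurable functions of $\ul x$, and in confirming that the formula for the resolvent passes correctly through the direct-integral decomposition; this is where one must invoke the fact that $\ul x \mapsto \widehat H_{\ul p,\ul x}$ is (real-)analytic, so by Proposition~\ref{prop:lamjnonconst} (and the standard perturbation theory of Kato cited there) the eigenvalues and eigenprojections may be chosen analytically in $\ul x$ away from a lower-dimensional set, and on that exceptional set the contribution to both $\mu^u$ and $\nu$ vanishes by Theorem~XIII.83(e) of~\cite{rs4} together with Theorem~\ref{thm:ac} (exactly as used in the proof of Theorem~\ref{thm:ids1}). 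The identity $\spr{u}{(H-z)^{-1}u} = \int_{\mathbb V}\spr{v(\ul x,\cdot)}{(\widehat H_{\ul p,\ul x}-z)^{-1}v(\ul x,\cdot)}d\ul x$ itself is immediate from unitarity of $W$ and $WHW^{-1} = \widehat H_{\ul p}$ acting fiberwise. I expect the bookkeeping with measurability of the $w_j$ to be the only real obstacle; the inequality chain producing $C$ is then routine.
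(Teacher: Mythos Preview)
Your proposal is correct and follows essentially the same route as the paper: both use the Floquet--Bloch decomposition to express $\mu^u$ and $\nu$ in terms of the fibered eigenvalues $E_j(\ul\theta)$, then invoke the pointwise bound on $Wu$ from Lemma~\ref{lem:propW} to dominate the weights $|\spr{\psi_j(\ul\theta)}{u_{\ul\theta}}|^2$ uniformly. The only cosmetic difference is that the paper compares the Lebesgue densities $d\mu^u/dE$ and $d\nu/dE$ via the boundary value $\lim_{\eps\to 0}\frac{1}{\pi}\im\int\frac{d\mu(t)}{t-E-\I\eps}$ of the Stieltjes transform, whereas you compare the measures directly via the pushforward inequality $\mu^u(A)\le C_0 P\,\nu(A)$; the latter is arguably cleaner and sidesteps any worry about measurable selections of the $\psi_j$, since only $\spr{v(\ul x,\cdot)}{P_{(-\infty,E]}(\widehat H_{\ul p,\ul x})v(\ul x,\cdot)}$ enters.
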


Here $\frac{d\mu^{u}}{d\nu}$ denotes the Radon-Nikodym
derivative of $\mu^u$ with respect to $\nu$.
At this point, I would like to ask

\begin{question}\label{ques:idslp}
 Let $B > 0$.
 Do there exist $q > 1$ and $C > 0$
 such that we have for all $\ul{p}$-periodic
 $V$ with $\|V\|_{\infty} \leq B$ that
 \be
  \left\|\frac{d\nu}{dE}\right\|_{L^q(\R)} \leq C?
 \ee
\end{question}

This is true in dimension one, see \cite{dg2}.
A positive answer to this question
would allow us to obtain an uniform $L^q$ bound
on all spectral measures. This result would then in
turn allow us to carry over the construction of limit-periodic
potentials with absolutely continuous spectrum of
Avron--Simon \cite{as1} (see also \cite{dg2})
to the multi-dimensional case.

\bigskip

We now begin with the proof of Theorem~\ref{thm:specmeasac}.
Denote $u_{\ul{\theta}}(\ul{k}) = (Wu)(\ul{\theta}, k)$
and by $E_j(\ul{\theta})$, $\psi_j(\ul{\theta})$ the eigenvalues
and eigenfunctions of $\widehat{H}_{\ul{p}, \ul{\theta}}$. We clearly
have that
\be
 \int\frac{d\mu^{u}(t)}{t-z} = \sum_{j=1}^{P}
  \int_{\mathbb{V}} |\spr{\psi_j(\ul{\theta})}{u_{\ul{\theta}}}|^2
   \frac{d \ul{\theta}}{E_j(\ul{\theta}) - z}.
\ee
On the other hand, we have seen in the previous
part of this section that
\be
 \int\frac{d\nu(t)}{t - z} = \frac{1}{\prod_{j=1}^{d} p_j}
  \sum_{j=1}^{P} 
   \int_{\mathbb{V}} \frac{d\ul{\theta}}{E_j(\ul{\theta}) - z}.
\ee
We are now ready for

\begin{proof}[Proof of Theorem~\ref{thm:specmeasac}]
 The densities of a measure are given by
 \[
  \frac{d\mu^{u}(E)}{dE} = 
   \lim_{\eps\to 0} \frac{1}{\pi} 
    \im\left(\spr{u}{(H - (E+\I\eps))^{-1} u}\right).
 \]
 Hence, we conclude from the previous two formulas
 that
 \[\begin{split}
  \frac{d\mu^{u}}{d E} &\leq \frac{1}{\pi}
   \sup_{\stackrel{\ul{\theta}\in [0,1]^d}{1\leq j\leq P}} 
   |\spr{\psi_j(\ul{\theta})}{u_{\ul{\theta}}}|^2
     \cdot \lim_{\eps\to 0} \int_{\mathbb{V}} \sum_{j=1}^{P}
      \frac{\eps d\ul{\theta}}{(E_j(\ul{\theta}) - E)^2 + \eps^2} 
       \\ & \leq C(u, \ul{p}) \frac{d\nu}{dE},    
 \end{split}\]
 where we used $|\spr{\psi_j(\ul{\theta})}{u_{\ul{\theta}}}|$ is uniformly
 bounded, by Lemma~\ref{lem:propW} and $\psi_j$ being normalized.
\end{proof}

%%%%%%%%%%%%%%%%%%%%%%%%%%%%%%%%%%%%%%%%%%%%%%%%%%%%%%%%%%%%%%%%%%%%%
%
%
%

\section{Simplicity of the spectrum for coprime periods}
\label{sec:specsimple}

In this section, we restrict ourself to dimension two.
For this reason, we will denote the periods of the
potential $V$ by $(p,q)$ and the angles in the 
Floquet--Bloch decomposition by $\theta \in [0, \frac{1}{p})$,
$\varphi \in [0,\frac{1}{q})$.
We furthermore recall that $p,q \geq 2$ are called coprime,
if they have no common divisor.

The following theorem will be proven in this section.
It is a technical result that will allow the constructions
in the following sections.

\begin{theorem}\label{thm:coprimeevsimple}
 Let $p, q\geq 2$ be co-prime, $E\in\R$, and $V:\Z^2 \to\R$
 be $(p,q)$-periodic. Then the set
 \be
  \left\{(\theta,\varphi):\quad E\text{ is an eigenvalue of multiplicity
   $\geq 2$ of } \widehat{H}_{(p,q), (\theta,\varphi)}\right\}
 \ee
 is finite.
\end{theorem}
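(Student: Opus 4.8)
The plan is to exploit the two commuting one-parameter analytic families obtained by moving $\theta$ and $\varphi$ separately, together with the coprimality of $p$ and $q$. First I would fix $\varphi$ and regard $A_\varphi(\theta) = \widehat{H}_{(p,q),(\theta,\varphi)}$ as an analytic family in $\theta \in \C$; by Proposition~\ref{prop:lamjnonconst} its eigenvalues $\lambda_j(\theta)$ extend to analytic functions that are nonconstant. If $E$ were an eigenvalue of multiplicity $\geq 2$ for $\widehat{H}_{(p,q),(\theta,\varphi)}$ for infinitely many pairs $(\theta,\varphi)$, then (since $[0,\tfrac1p)\times[0,\tfrac1q)$ is a bounded rectangle) there is an accumulation point, and by a slicing argument one of the two coordinate directions must contain infinitely many such points on a fixed slice $\{\varphi = \varphi_0\}$ or $\{\theta = \theta_0\}$. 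On such a slice the condition ``$E$ is an eigenvalue of multiplicity $\geq 2$'' is an analytic condition in the remaining parameter (it is the vanishing of an appropriate minor of $\det(A(\cdot) - E)$, or more robustly: $\det(A(\cdot)-E) = 0$ and $\tfrac{d}{d\cdot}\det(A(\cdot)-E) = 0$ simultaneously, using that a semisimple double eigenvalue forces a double root of the characteristic polynomial, while a Jordan block is excluded because $\widehat{H}$ is self-adjoint for real parameters and one can perturb), so having infinitely many zeros on an interval forces it to hold identically on the whole complex slice.

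The heart of the argument is then to derive a contradiction from the assumption that $E$ is a (multiplicity $\geq 2$) eigenvalue of $\widehat{H}_{(p,q),(\theta,\varphi_0)}$ for \emph{all} $\theta \in \C$ (say). Here I would pass to the space-basis picture of Subsection~\ref{ss:spacebasis}: by Lemma~\ref{lem:l2theta}, letting $\theta$ range over $\tfrac1{p}$ of a period and combining the slices, the eigenvalues of $\widehat{H}_{(p,q),(\theta,\varphi_0)}$ for all such $\theta$ reconstruct the eigenvalues of $H_{(1,q),(\cdot,\varphi_0)}$ on the larger box, and more to the point: if $E$ has multiplicity $\geq 2$ for every $\theta$, then the total multiplicity of $E$ as an eigenvalue of $H_{(mp,q),\cdot}$ grows like $2m$, i.e. the ``band'' $E_j(\theta,\varphi_0) \equiv E$ must be flat for \emph{two} distinct indices $j$. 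Now Lemma~\ref{lem:evjnonconst} (via Proposition~\ref{prop:lamjnonconst}) says no single band is flat on an open set; so two flat bands is already a contradiction — \emph{unless} the flatness is only in the $\theta$-direction. This is exactly where coprimality enters: a branch $\lambda(\theta,\varphi)$ that is constant in $\theta$ but varies in $\varphi$ cannot be globally single-valued on the torus $\T^2$ in a way compatible with both periodicities $\theta \mapsto \theta + \tfrac1p$ and the hidden $\theta \mapsto \theta + \tfrac1{pq}$-type symmetries coming from Lemma~\ref{lem:l2theta} when $\gcd(p,q)=1$; tracking the monodromy of the analytic continuation of the eigenvalue branch around the torus forces the branch to depend nontrivially on $\theta$ as well, contradicting flatness.

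I expect the main obstacle to be making the last step — the coprimality/monodromy argument — fully rigorous: one must carefully set up the analytic continuation of the eigenvalue branch attached to the value $E$ across the torus, control the (finitely many) branch points where eigenvalues collide, and show that the deck transformations of the covering $\T^2 \to \T^2$ induced by refining the period from $(p,q)$ to $(p,1)$ or $(1,q)$ act on the set of flat branches without fixed points when $\gcd(p,q) = 1$, so a flat branch would have to split into $\geq 2$ genuinely distinct flat bands, contradicting Lemma~\ref{lem:evjnonconst}. A cleaner route, which I would try first, is to argue by contradiction directly on the characteristic polynomial: the coefficients of $\det(\widehat{H}_{(p,q),(\theta,\varphi)} - E)$ are trigonometric polynomials in $(e(\theta), e(\varphi))$ of controlled degree, the ``multiplicity $\geq 2$'' locus is the common zero set of this polynomial and its $\theta$-derivative, hence an algebraic curve (or all of $\T^2$), and one shows it cannot be all of $\T^2$ by evaluating at a point where the diagonal term $2\cos 2\pi\theta + 2\cos 2\pi\varphi$ is large and real (so $\widehat{H}$ is a small perturbation of a diagonal operator with simple spectrum), which already gives simplicity there; finiteness of the intersection of two algebraic curves on $\T^2$ then needs only that the two curves share no common component, which again reduces to the non-identical-vanishing just established. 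The delicate point in this alternative is handling the possibility that the two trigonometric-polynomial curves share a common component even though neither is all of $\T^2$ — ruling this out is, I believe, precisely where $\gcd(p,q)=1$ must be used, via an irreducibility or resultant computation in the ring of trigonometric polynomials on $\T^2$.
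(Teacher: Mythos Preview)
Your second route---reducing to an algebraic-curve/resultant argument---is the right framework and is essentially what the paper does, but the proposal has a genuine gap at exactly the point you flag as delicate. First a minor correction: the ``multiplicity $\geq 2$'' locus is the common zero set of $P_E(u,v)=(uv)^{pq}\det(\widetilde H(u,v)-E)$ and $\partial_E P_E(u,v)$ (a double root of the characteristic polynomial in the spectral variable), not its $\theta$-derivative. More seriously, finding \emph{one} point where the spectrum is simple does not rule out a shared component of the two curves; you need a whole slice. And your proposed mechanism---``evaluate where $2\cos 2\pi\theta+2\cos 2\pi\varphi$ is large''---cannot work, since for real $(\theta,\varphi)$ this quantity is bounded by $4$ and never dominates $\widehat V\ast$.

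The missing idea is to leave the real torus and take $u=e(\theta)$ \emph{complex and small} (equivalently $\mathrm{Im}\,\theta\to+\infty$). In that regime $(uv)\,\widetilde H(u,v)$ is a small perturbation of the diagonal operator with entries $u^2v\,e(k/p)+v\,e(-k/p)+uv^2\,e(\ell/q)+u\,e(-\ell/q)-Euv$, and one can locate all $2pq$ zeros of $v\mapsto P_E(u,v)$ asymptotically: they sit near $-\tfrac{1}{u}e\!\big(-\tfrac{kq+\ell p}{pq}\big)$ and $u\,e\!\big(\tfrac{kq-\ell p}{pq}\big)$ for $(k,\ell)\in\{0,\dots,p-1\}\times\{0,\dots,q-1\}$. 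These $2pq$ values are pairwise distinct precisely when the map $(k,\ell)\mapsto kq\pm\ell p \pmod{pq}$ is injective, i.e.\ when $\gcd(p,q)=1$ (Chinese Remainder Theorem). Hence for $u=u_0$ small the polynomial $P_E(u_0,\cdot)$ has $2pq$ simple zeros, so $E$ is never a double eigenvalue on the whole line $u=u_0$; this makes the resultant $f(u)=R\big(P_E(u,\cdot),\partial_E P_E(u,\cdot)\big)$ nonzero at $u_0$, and by symmetry the same holds for some $v_0$. The locus is then contained in $\{f(u)=0\}\times\{g(v)=0\}$, a finite set. Your monodromy/flat-band paragraph can be dropped; it is neither needed nor, as stated, correct (the ``slicing'' claim that infinitely many points force infinitely many on a fixed coordinate line is false---consider $(1/n,1/n)$).
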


The analog statement in dimensions $d \geq 2$, is that
the set of $\ul{\theta}$ such that $E$ is an eigenvalue
of multiplicity at least two, has dimension less than
$d - 2$. I expect that proving this result and then
using it would be somewhat more involved, than what
is done here.

The proof of this theorem has essentially two parts. First
an algebraic reduction is performed allowing us to prove
the claim by proving a statement when $\im(\theta)$ or
$\im(\varphi)$ is large. In this regime the operator
is essentially diagonal, and the analysis of this takes
up the second step.

\subsection{Algebraic preparations}

Define
\be
 \widetilde{H}_0(u,v) \psi(k,l) = \left(
    e\left(\frac{k}{p}\right) u + e\left(-\frac{k}{p}\right) \frac{1}{u}
   +e\left(\frac{l}{q}\right) v + e\left(-\frac{l}{q}\right) \frac{1}{v}
   \right) \psi(k,l)
\ee
acting on $\ell^2([1,p] \times [1,q])$ and
$\widetilde{H}(u,v) = \widetilde{H}_0(u,v) + \widehat{V}\ast$.
We have that $\widehat{H}(\theta,\varphi) = \widetilde{H}(e(\theta), e(\varphi))$ if one identifies $\ell^2([1,p]\times [1,q])$
with $\ell^2(\mathbb{B})$ in the obvious way.

\begin{proposition}\label{prop:metabezout}
 Let $E \in \R$. Assume there exist $u_0, v_0 \in \C$ such that
 for no $u,v \in \C$, $E$ is an eigenvalue of multiplicity at least
 two of $\widetilde{H}(u_0,v)$ or $\widetilde{H}(u,v_0)$.
 Then the set
 \be\label{eq:metabezout}
  \left\{(\theta, \varphi):\quad E\text{ is an eigenvalue of multiplicity
   at least two of }\widehat{H}(\theta,\varphi)\right\}
 \ee
 is finite.
\end{proposition}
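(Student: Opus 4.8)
The plan is to reformulate the statement \eqref{eq:metabezout} as the vanishing of a resultant-type polynomial and then exploit the hypothesis to show that polynomial is not identically zero. Concretely, recall that $E$ is an eigenvalue of multiplicity at least two of a matrix $M$ precisely when $\det(M - E) = 0$ and, say, the first derivative of the characteristic polynomial at $E$ also vanishes; equivalently $\chi_M(E) = \chi_M'(E) = 0$. Applying this to $M = \widetilde H(u,v)$, which depends polynomially on $u, u^{-1}, v, v^{-1}$, one sees that both $\chi_{\widetilde H(u,v)}(E)$ and $\chi_{\widetilde H(u,v)}'(E)$ (derivative in the spectral parameter, evaluated at $E$) are Laurent polynomials in $(u,v)$ with coefficients depending on $E$ and $\widehat V$. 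Clearing denominators by multiplying by suitable powers of $u$ and $v$, we obtain two genuine polynomials $f(u,v), g(u,v) \in \C[u,v]$ whose common zero set, intersected with $(\C^\ast)^2$, is exactly the set whose image under $(\theta,\varphi) \mapsto (e(\theta), e(\varphi))$ appears in \eqref{eq:metabezout}.

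Next I would run the standard elimination argument. Form the resultant $R(v) = \mathrm{Res}_u(f(u,v), g(u,v)) \in \C[v]$. The hypothesis says there is a value $v_0$ such that for no $u$ is $E$ an eigenvalue of multiplicity $\geq 2$ of $\widetilde H(u, v_0)$; that is, $f(\cdot, v_0)$ and $g(\cdot, v_0)$ have no common root in $\C^\ast$. One must take a small amount of care about common roots at $u = 0$ or $u = \infty$ introduced by clearing denominators, but since $\widetilde H(u,v)$ is defined and analytic for $u \in \C^\ast$ and its entries blow up as $u \to 0$ or $\infty$, $E$ cannot be an eigenvalue there, so for $v = v_0$ the polynomials $f(\cdot,v_0), g(\cdot,v_0)$ share no root at all once $v_0$ itself avoids finitely many bad values — and if $v_0$ is one of those bad values we may perturb, since the hypothesis with the given $v_0$ forces the generic statement. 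Hence $R(v_0) \neq 0$, so $R$ is a nonzero polynomial, so it has finitely many roots; call the root set $Z_v$. By the same argument with the roles of $u$ and $v$ swapped, using $u_0$, the resultant $\mathrm{Res}_v(f,g)$ is a nonzero polynomial with finite root set $Z_u$. Any common zero $(u,v) \in (\C^\ast)^2$ of $f$ and $g$ must have $v \in Z_v$ and $u \in Z_u$, so there are only finitely many such pairs. Since for each $(u,v)$ there are only finitely many $(\theta,\varphi) \in \mathbb{V}$ with $e(\theta) = u$, $e(\varphi) = v$ (indeed in $[0,1/p) \times [0,1/q)$ at most one), the set in \eqref{eq:metabezout} is finite.

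I expect the main obstacle to be the bookkeeping around clearing denominators and the passage between "multiplicity $\geq 2$" and the algebraic condition $\chi = \chi' = 0$: one should check that this algebraic condition is the right one even when $E$ happens to be an eigenvalue of multiplicity exactly one but $\widetilde H(u,v)$ is non-self-adjoint (for complex $u,v$ the matrix need not be normal, so "multiplicity" should be read as algebraic multiplicity of the root of the characteristic polynomial, and indeed $\chi(E) = \chi'(E) = 0$ is exactly algebraic multiplicity $\geq 2$). The other delicate point is ensuring the resultant is not killed by spurious factors of $u$ or $v$; handling this cleanly amounts to observing that for $|u|$ large $\widetilde H(u,v)$ is diagonally dominant with distinct diagonal entries $\sim e(k/p) u$ (here coprimality of $p,q$ is what guarantees the $pq$ values $e(k/p)u + e(l/q)v + \dots$ are distinct for generic large $u$), so $E$ is a simple eigenvalue there and $f, g$ genuinely have no common root for such $u$ — which both verifies the hypothesis is satisfiable and shows the leading coefficients behave. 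The rest is routine elimination theory, and no hard estimates are needed for this proposition itself; the analytic work is deferred to the second step of the proof of Theorem~\ref{thm:coprimeevsimple}.
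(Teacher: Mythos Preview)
Your approach is the paper's: encode multiplicity $\geq 2$ as $P_E=\partial_E P_E=0$ for the polynomial $P_E(u,v)=(uv)^{pq}\det(\widetilde H(u,v)-E)$, take resultants in each variable, and use the hypothesis at $u_0,v_0$ to see these resultants are not identically zero, hence have finite zero sets. The only loose end is your perturbation argument for spurious common roots at $u=0$, which is unnecessary since a direct check shows $P_E(u_0,0)=u_0^{pq}\prod_{k,\ell}e(-\ell/q)\neq 0$ for $u_0\neq 0$ (the matrix $uv(\widetilde H-E)$ becomes diagonal at $v=0$); also note that coprimality plays no role in this proposition and your remarks about diagonal dominance for large $|u|$ belong to the next step, where the hypothesis is verified.
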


The proof of this result is based on ideas from algebraic
geometry in particular B\'ezout's theorem and resultants.
We recall main properties of resultants for the convenience of the reader.
For further details see Chapter~3 of \cite{kirwan}.
Given two polynomials $f(x) = \sum_{j=0}^{d} a_j x^j$ and
$g(x) = \sum_{\ell=0}^{D} b_{\ell} x^{\ell}$, their
resultant $R(f,g)$ is defined as the determinant
of the {\em Sylvester matrix}
\be
 \begin{pmatrix}
 a_0 & a_1 & \dots & a_d    &   0 & 0 & \dots & 0 \\
 0   & a_0 & a_1   & \dots  & a_d & 0 & \dots & 0 \\
 \vdots & &        & \ddots &     &   &       & \vdots \\
 0   & 0   & \dots & 0      & a_1 & a_2 & \dots & a_d \\
 b_0 & b_1 & \dots & \dots  & b_D & 0 & \dots & 0 \\
 \vdots & &        & \ddots &     &   &       & \vdots \\
 0   & \dots &     & b_0    & \dots &  &      & b_D
 \end{pmatrix}. 
\ee
We have that $R(f,g) = 0$ if $f$ and $g$ have a common zero.

We now return to the original problem.
Define for $E\in\R$ the polynomial
\be
 P_E(u,v) = (uv)^{p\cdot q} \det(\widetilde{H}(u,v) - E).
\ee
We can write $P_E(u,v) = \sum_{j=0}^{2pq} a_j(u) v^j$
and $\partial_E P_E(u,v) = \sum_{j=0}^{2pq-1} b_j(u) v^j$
for some polynomials $a_j, b_j$ in $u$. Then we can
define the resultant of these two polynomials, which
will be a function of $u$
\be
 f(u) = R(P_E(u, .), \partial_E P_E(u, .)).
\ee
Similarly, we can define
\be
 g(v) = R(P_E(., v), \partial_E P_E(., v)).
\ee
Since $f$ and $g$ are polynomials, they are either
constant equal to $0$ or have finitely many zeros.

\begin{lemma}
 If $f$ and $g$ are not the constant zero function,
 then the number of points $(u,v)$ such that $E$
 is an eigenvalue of $\widetilde{H}(u,v)$ of multiplicity
 at least two, is finite.
\end{lemma}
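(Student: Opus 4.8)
The plan is to exploit the fact that $E$ being an eigenvalue of multiplicity at least two of $\widetilde H(u,v)$ forces $P_E(u,v)$ to have a repeated root as a polynomial in $v$ (equivalently in $u$), so that the discriminant vanishes. Concretely, if $E$ is an eigenvalue of $\widetilde H(u,v)$ of multiplicity $\geq 2$, then $(uv)^{pq}\det(\widetilde H(u,v)-E) = P_E(u,v)$ vanishes to order $\geq 2$ in $E$, hence $\partial_E P_E(u,v) = 0$ there as well (the characteristic polynomial in $E$ has a double root). Thus $v$ is a common zero of $P_E(u,\cdot)$ and $\partial_E P_E(u,\cdot)$, so the resultant $f(u) = R(P_E(u,\cdot),\partial_E P_E(u,\cdot))$ vanishes at that $u$; symmetrically $g(v) = 0$. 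The first step I would carry out is to record this implication carefully: the set $S$ of pairs $(u,v)$ where $E$ is an eigenvalue of multiplicity $\geq 2$ is contained in $Z(f) \times Z(g)$, where $Z(f) = \{u : f(u) = 0\}$ and similarly for $g$.

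Next, under the hypothesis that $f$ and $g$ are not identically zero, $Z(f)$ and $Z(g)$ are each finite sets. Therefore $S \subseteq Z(f) \times Z(g)$ is contained in a finite set, hence is finite, which is exactly the claim. This is the core of the argument and is essentially immediate once the resultant bookkeeping is set up; I would present it in two short paragraphs, the first establishing the containment $S \subseteq Z(f)\times Z(g)$ and the second concluding finiteness from the non-vanishing hypothesis on $f$ and $g$.

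The one point that requires a little care — and which I expect to be the main (minor) obstacle — is the reduction "$\partial_E P_E(u,v) = 0$ whenever $E$ has multiplicity $\geq 2$." One must note that $P_E(u,v)$, up to the nonvanishing factor $(uv)^{pq}$ (with $u,v\neq 0$), equals $\det(\widetilde H(u,v)-E)$, which is (a scalar multiple of) the characteristic polynomial of $\widetilde H(u,v)$ evaluated at the scalar $E$; an eigenvalue of algebraic multiplicity $\geq 2$ is precisely a multiple root of that characteristic polynomial, i.e.\ a common zero of the polynomial and its $E$-derivative. Since $(uv)^{pq}$ is a nonzero constant in $E$, differentiating $P_E$ in $E$ and evaluating gives $0$ as well. (One should also observe that $u = 0$ or $v = 0$ are not in play, or handle them separately, since $\widetilde H(u,v)$ is only defined for $u,v \neq 0$.) Once this algebraic identification is in hand, the rest is the standard resultant fact — already recalled in the excerpt, $R(f,g) = 0$ if $f,g$ have a common zero — applied with $f = P_E(u,\cdot)$ and $g = \partial_E P_E(u,\cdot)$, and then symmetrically in the other variable. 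No deeper input (not even B\'ezout) is needed for this particular lemma; B\'ezout enters only later, when one must also rule out the degenerate case that $f$ or $g$ vanishes identically.
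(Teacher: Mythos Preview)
Your proposal is correct and matches the paper's proof essentially verbatim: from multiplicity $\geq 2$ you deduce $P_E(u,v)=\partial_E P_E(u,v)=0$, hence $f(u)=g(v)=0$, and conclude that the set in question is contained in the finite set $\{f=0\}\times\{g=0\}$. The extra care you take in justifying $\partial_E P_E(u,v)=0$ via the characteristic polynomial and the nonvanishing factor $(uv)^{pq}$ is a nice expansion of what the paper leaves implicit.
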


\begin{proof}
 If $E$ is an eigenvalue of $\widetilde{H}(u,v)$ of multiplicity
 at least two, then we have that
 \[
  P_{E}(u, v) = \partial_E P_{E}(u,v) = 0.
 \]
 In particular, we have that $f(u) = g(v) = 0$. Hence,
 the set of $(u,v)$ where $E$ is an eigenvalue of multiplicity
 at least two is contained in
 \[
  \{(u,v):\quad f(u) = g(v) = 0\}
 \]
 which is finite.
\end{proof}

\begin{proof}[Proof of Proposition~\ref{prop:metabezout}]
 Our assumptions imply that $f(u_0) \neq 0$ and $g(v_0) \neq 0$.
 Hence, we are done.
\end{proof}

\begin{remark}
 The degree of $P_E(u,v)$ is $2 p \cdot q$ and
 of $\partial_E P_{E}(u,v)$ is $2 p \cdot q - 1$.
 Using this and an inspection of the previous
 argument shows that the set in \eqref{eq:metabezout}
 contains at most $(2pq)^2$ many points.
\end{remark}

\subsection{Perturbative analysis}
Now, we will verify the conditions of
Proposition~\ref{prop:metabezout}. We first observe
that the claim is symmetric in $u$ and $v$, so it
suffices to exhibit $u_0$. The proof of the existence
of $v_0$ is then similar.

Next, we will split the operator $(uv)^{p\cdot q} (\widetilde{H}(u,v)  - E)$
into diagonal and off-diagonal part. For this define
\begin{align}
 D(u,v)\psi(k,\ell) & = d_{k,\ell}(u,v) \psi(k,\ell) \\
  d_{k,\ell}(u,v) & = u^2 v e\left(\frac{k}{p}\right) + v e\left(-\frac{k}{p}\right)
  + u v^2 e\left(\frac{\ell}{q}\right) + u e\left(-\frac{\ell}{q}\right)
  - E u v,\\
 T(u,v)\psi(k,\ell) &= u v \sum_{\ti{k}, \ti{\ell}} 
  \widehat{V}(\ti{k},\ti{\ell}) \psi(k - \ti{k}, \ell - \ti{\ell}).
\end{align}
Then, we have that
\be
 (uv)^{p\cdot q} (\widetilde{H}(u,v)  - E) = D(u,v) + T(u,v).
\ee
A simple counting argument shows that
for every $u \in \C$,
$p(v) = P_{E}(u, v) = \det(D(u,v) + T(u,v))$
is a polynomial of degree $2 p \cdot q$. Hence, we have

\begin{lemma}
 If there exists $u$ such that there
 exist $2 p \cdot q$ different $v_j$ such that
 \be
  0 \in \sigma(D(u, v_j) + T(u, v_j))
 \ee
 is a simple eigenvalue, then the assumptions of Proposition~\ref{prop:metabezout}
 hold.
\end{lemma}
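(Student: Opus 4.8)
The statement to prove is an implication: the hypothesis is that there is some $u \in \C$ for which one can find $2pq$ distinct values $v_1, \dots, v_{2pq}$ with $0$ a simple eigenvalue of $D(u,v_j) + T(u,v_j)$, and the conclusion is that the assumptions of Proposition~\ref{prop:metabezout} are met, i.e.\ there exist $u_0, v_0 \in \C$ so that $E$ is never an eigenvalue of multiplicity $\geq 2$ of $\widetilde{H}(u_0, v)$ (for any $v$) nor of $\widetilde{H}(u, v_0)$ (for any $u$). By the symmetry in $u$ and $v$ noted just before the lemma, it suffices to produce $u_0$; the plan is to take $u_0 = u$, the very value furnished by the hypothesis.

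First I would recall the dictionary between the two formulations. For fixed $u$, the polynomial $p(v) = P_E(u,v) = (uv)^{pq}\det(\widetilde{H}(u,v)-E) = \det(D(u,v)+T(u,v))$ has degree exactly $2pq$ in $v$ (the simple counting argument already invoked). Its roots $v$ are precisely the values for which $E$ is an eigenvalue of $\widetilde{H}(u,v)$ (equivalently $0 \in \sigma(D(u,v)+T(u,v))$), excluding $v = 0$, which is not a root since $P_E(u, \cdot)$ has nonzero constant term for generic $u$—and in any case the $v_j$ supplied by the hypothesis are honest roots. Now the key point: if $E$ were an eigenvalue of multiplicity $\geq 2$ of $\widetilde{H}(u, v)$ for some $v$, then $v$ would be a \emph{repeated} root of $p(v)$, because $\det(\widetilde{H}(u,\cdot)-E)$ vanishes to order at least the algebraic multiplicity of the eigenvalue $E$ — concretely, $\partial_E P_E(u,v) = 0$ together with $P_E(u,v) = 0$ forces $p(v) = \partial_v$-something... more carefully, I would argue that the characteristic polynomial $\det(\widehat{H}(\theta,\varphi)-E)$, hence its rescaled avatar $p(v)$, has a zero of multiplicity $\geq 2$ in $v$ at any $v$ where $E$ is a degenerate eigenvalue; this is the standard fact that a matrix family analytic in a parameter has its characteristic polynomial's discriminant (in the eigenvalue variable) vanishing, which propagates to a multiple root in the parameter. (Alternatively one can cite that $P_E(u,v) = \partial_E P_E(u,v) = 0$ at a degenerate eigenvalue, and that $v \mapsto P_E(u,v)$ and $v\mapsto \partial_E P_E(u,v)$ having a common root makes $f(u) = R(P_E(u,\cdot), \partial_E P_E(u,\cdot)) = 0$.)

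Given the hypothesis, $p(v) = P_E(u, \cdot)$ has $2pq$ \emph{distinct} roots $v_1, \dots, v_{2pq}$, and since $\deg p = 2pq$, these are \emph{all} of its roots and each is \emph{simple}. Therefore $p(v)$ has no repeated root, so by the previous paragraph $E$ is not an eigenvalue of multiplicity $\geq 2$ of $\widetilde{H}(u, v)$ for \emph{any} $v \in \C$. In the resultant language: $f(u) = R(P_E(u,\cdot), \partial_E P_E(u,\cdot)) \neq 0$, because $P_E(u,\cdot)$ and its $E$-derivative share no common root (a shared root would be a repeated root of $P_E(u,\cdot)$). This is exactly the condition $f(u_0) \neq 0$ with $u_0 = u$ that Proposition~\ref{prop:metabezout}'s proof extracts from its hypothesis; the analogous $v_0$ with $g(v_0) \neq 0$ is obtained by the symmetric argument. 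Hence the assumptions of Proposition~\ref{prop:metabezout} hold, and we are done.

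\textbf{Main obstacle.} The only non-formal point is the implication ``$E$ a degenerate eigenvalue of $\widetilde{H}(u,v)$ $\Rightarrow$ $v$ a multiple root of $p(v) = \det(\widetilde{H}(u,v)-E)$ (up to the $(uv)^{pq}$ factor).'' One must rule out the pathology where the geometric degeneracy of the eigenvalue does not force $v$ to be a repeated root. The clean way is via the resultant: at a degenerate eigenvalue $E$, both $P_E(u,v) = 0$ and $\partial_E P_E(u,v) = 0$ hold (since $E$ is at least a double root of the characteristic polynomial of the finite matrix $\widetilde{H}(u,v)$), so $P_E(u,\cdot)$ and $\partial_E P_E(u,\cdot)$ have the common root $v$, whence $f(u) = 0$; contrapositively, $2pq$ distinct simple roots $\Rightarrow f(u) \neq 0$. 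This is precisely the bookkeeping already set up in the subsection, so the ``obstacle'' is really just invoking it correctly rather than anything deep.
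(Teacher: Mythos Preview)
Your overall plan --- take $u_0 = u$, use the degree count $\deg_v P_E(u,\cdot)=2pq$ to conclude that the $v_j$ exhaust the roots, and then deduce the hypothesis of Proposition~\ref{prop:metabezout} --- is exactly what the paper intends by its one-word ``Hence''. But your write-up contains a genuine logical slip: the parenthetical ``a shared root [of $P_E(u,\cdot)$ and $\partial_E P_E(u,\cdot)$] would be a repeated root of $P_E(u,\cdot)$'' is false, and the deduction ``$p$ has no repeated root in $v$, so $E$ is never degenerate'' in the preceding sentence is unjustified. Counterexample: for the $2\times 2$ family $A(v)=\bigl(\begin{smallmatrix}0&1\\v&0\end{smallmatrix}\bigr)$ one has $\det(A(v)-E)=E^2-v$; at $E=0$ the eigenvalue is degenerate (algebraic multiplicity $2$), so $\partial_E$ vanishes, yet $v\mapsto -v$ has $v=0$ as a \emph{simple} root. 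Simplicity in $v$ and simplicity of the eigenvalue are different notions, and your main body only uses the former.

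The fix is shorter than what you wrote, and uses the simple-eigenvalue hypothesis directly (not merely the distinctness of the $v_j$): if $E$ were an eigenvalue of multiplicity $\geq 2$ of $\widetilde{H}(u,v)$ for some $v$, then $P_E(u,v)=0$, hence $v=v_j$ for some $j$; but by hypothesis $E$ is a \emph{simple} eigenvalue of $\widetilde{H}(u,v_j)$, contradiction. Equivalently, the hypothesis gives $\partial_E P_E(u,v_j)\neq 0$ at every root $v_j$ of $P_E(u,\cdot)$, so $P_E(u,\cdot)$ and $\partial_E P_E(u,\cdot)$ share no root and $f(u)\neq 0$. Your ``Main obstacle'' paragraph gestures at this, but the word ``contrapositively'' is misplaced: you are not taking a contrapositive, you are ruling out common roots one by one using the extra information that $0$ is simple at each $v_j$.
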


We will show the claim for all sufficiently small $u$.
We begin with the analysis of $D(u, v)$.
The next lemma is a simple computation.

\begin{lemma}
 Let $u$ be small, $k, \ell$ be given. The solutions of
 \be
  d_{k,\ell}(u, v) = 0
 \ee
 are given by
 \be
  v_+^{k,\ell} = -\frac{1}{u} e(-\frac{kq+\ell p}{pq}) + O(1),\quad
  v_-^{k,\ell} = u e(\frac{kq - \ell p}{pq}) + O(u^2).
 \ee
\end{lemma}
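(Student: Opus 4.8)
The plan is to note that, for fixed $u\neq 0$ and $v\neq 0$, the equation $d_{k,\ell}(u,v)=0$ is a quadratic equation in $v$ which degenerates as $u\to 0$, and to read off its two roots by an elementary asymptotic expansion in $u$. Grouping $d_{k,\ell}(u,v)$ by powers of $v$ exhibits it as the quadratic polynomial
\[
 d_{k,\ell}(u,v)=A(u)\,v^2+B(u)\,v+C(u),
\]
where $A(u)=u\,e(\tfrac\ell q)$, $B(u)=u^2e(\tfrac kp)+e(-\tfrac kp)-Eu$, and $C(u)=u\,e(-\tfrac\ell q)$; so for $u\neq 0$ the solutions of $d_{k,\ell}(u,v)=0$ are exactly the roots of this quadratic.

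First I would observe that at $u=0$ the quadratic degenerates, since $A(0)=C(0)=0$ while $B(0)=e(-k/p)\neq 0$; hence for $0<|u|$ small it has two distinct nonzero roots, which I call $v_+^{k,\ell}$ and $v_-^{k,\ell}$. By Vieta's formulas,
\[
 v_+^{k,\ell}\,v_-^{k,\ell}=\frac{C(u)}{A(u)}=e\big(-\tfrac{2\ell}{q}\big),
\]
\[
 v_+^{k,\ell}+v_-^{k,\ell}=-\frac{B(u)}{A(u)}=-\frac1u\,e\big(-\tfrac{kq+\ell p}{pq}\big)+O(1),
\]
the last step using $\tfrac kp+\tfrac\ell q=\tfrac{kq+\ell p}{pq}$. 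Since the product stays on the unit circle while the sum has a simple pole at $u=0$, exactly one of the two roots blows up and the other tends to $0$; labelling the one that blows up $v_+^{k,\ell}$, the sum relation yields $v_+^{k,\ell}=-B(u)/A(u)+o(1)$, which is the first claimed asymptotic, and the product relation then yields $v_-^{k,\ell}=e(-2\ell/q)/v_+^{k,\ell}$, which upon expanding $1/v_+^{k,\ell}=-u\,e(\tfrac{kq+\ell p}{pq})+O(u^2)$ and using $\tfrac{kq+\ell p}{pq}-\tfrac{2\ell}{q}=\tfrac{kq-\ell p}{pq}$ gives the second. (Alternatively one may bypass Vieta and apply the quadratic formula directly, Taylor-expanding $\sqrt{B^2-4AC}$ about $u=0$ with the branch normalized so that it equals $e(-k/p)$ at $u=0$: the branch for which the two leading terms of the numerator cancel produces $v_-^{k,\ell}$, the other produces $v_+^{k,\ell}$.)

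This is the ``simple computation'' advertised, so there is no genuine obstacle; the only points requiring care are fixing the branch of the square root --- equivalently, a consistent labelling of $v_\pm^{k,\ell}$ --- and pushing each expansion one order past its leading term so that the stated errors $O(1)$ and $O(u^2)$ come out. If a conceptual statement of the existence and analyticity of the small branch near $u=0$ is preferred, it follows from the implicit function theorem applied to $d_{k,\ell}$ at $(u,v)=(0,0)$: indeed $d_{k,\ell}(0,0)=0$ and $\partial_v d_{k,\ell}(0,0)=B(0)=e(-k/p)\neq 0$, which gives a unique analytic branch $v_-^{k,\ell}(u)$ with $v_-^{k,\ell}(0)=0$, the remaining root being $v_+^{k,\ell}$.
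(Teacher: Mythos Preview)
Your proposal is correct and follows essentially the same route as the paper: both reduce $d_{k,\ell}(u,v)=0$ to a quadratic in $v$ with small parameter $u$ and read off the two roots. The paper normalizes to $v^2+Av+B=0$ and Taylor-expands $\sqrt{A^2-4B}$ in the quadratic formula, whereas you use Vieta's relations (product on the unit circle, sum with a simple pole) to separate the large and small roots; these are interchangeable variants of the same elementary computation, and indeed you mention the paper's square-root expansion as an alternative.
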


\begin{proof}
 Let
 \[
  A=u e\left(\frac{kq-\ell p}{pq}\right) - E e(\frac{\ell}{q})
   + \frac{1}{u} e(-\frac{kq+\ell p}{pq}),\quad
  B=e\left(-\frac{2\ell}{q}\right)
 \]
 such that $v_{\pm}^{k,\ell}$ satisfy
 the quadratic equation $v^2 + A v + B = 0$.
 Since $u$ is small, we obtain for the roots
 \[
  v_{\pm}^{k,\ell} = - \frac{1}{2} A (1 \pm 1 \mp \frac{2 B}{A^2}
   + O(\frac{B^2}{A^4}))
 \]
 which yields the claim after some computations.
\end{proof}

The next lemma can be proven by a computation.

\begin{lemma}
 There exists a constant $C > 0$.
 For $(k,\ell) \neq (\ti{k}, \ti{\ell})$
 \be
  d_{\ti{k},\ti{\ell}}(u, v_{+}^{k,\ell}) \geq \frac{C}{u},\quad
  d_{\ti{k},\ti{\ell}}(u, v_{-}^{k,\ell}) \geq C u
 \ee
 and
 \be
  |v_+^{k,\ell} - v_+^{\ti{k},\ti{\ell}}| \geq \frac{C}{u},\quad
  |v_+^{k,\ell} - v_-^{\ti{k},\ti{\ell}}| \geq \frac{C}{u},\quad
  |v_-^{k,\ell} - v_-^{\ti{k},\ti{\ell}}| \geq Cu.
 \ee
\end{lemma}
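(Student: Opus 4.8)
The statement to be proven is essentially an explicit separation bound for the zeros $v_{\pm}^{k,\ell}$ of the diagonal entries $d_{k,\ell}(u,v)$, namely that distinct indices give rise to well-separated roots and that $d_{\ti k,\ti\ell}$ stays large at the roots coming from other indices. I would carry out the computation using the asymptotic expressions from the previous lemma: for small $u$ we have $v_+^{k,\ell} = -u^{-1} e\!\left(-\tfrac{kq+\ell p}{pq}\right) + O(1)$ and $v_-^{k,\ell} = u\, e\!\left(\tfrac{kq-\ell p}{pq}\right) + O(u^2)$. The whole argument reduces to showing that the phases $e\!\left(-\tfrac{kq+\ell p}{pq}\right)$ (for the large roots) and $e\!\left(\tfrac{kq-\ell p}{pq}\right)$ (for the small roots) are pairwise distinct over the range $(k,\ell) \in [1,p]\times[1,q]$, which is precisely where coprimality of $p$ and $q$ enters.

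First I would establish the phase-injectivity claim. For the large roots: if $e\!\left(-\tfrac{kq+\ell p}{pq}\right) = e\!\left(-\tfrac{\ti k q+\ti\ell p}{pq}\right)$ then $kq+\ell p \equiv \ti k q + \ti\ell p \pmod{pq}$, i.e. $(k-\ti k)q \equiv (\ti\ell-\ell)p \pmod{pq}$; reducing mod $p$ gives $(k-\ti k)q \equiv 0 \pmod p$, and since $\gcd(p,q)=1$ this forces $k \equiv \ti k \pmod p$, hence $k=\ti k$ in the given range, and then $\ell=\ti\ell$ likewise. The same argument with $kq-\ell p$ handles the small roots. Since there are finitely many index pairs, there is a constant $c_0 > 0$ with $|e(\cdot)-e(\cdot)| \geq c_0$ for all distinct pairs among each family. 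Then $|v_+^{k,\ell}-v_+^{\ti k,\ti\ell}| \geq c_0/|u| - O(1) \geq C/|u|$ for $|u|$ small; $|v_-^{k,\ell}-v_-^{\ti k,\ti\ell}| \geq c_0|u| - O(|u|^2) \geq C|u|$; and $|v_+^{k,\ell}-v_-^{\ti k,\ti\ell}|$ is $\geq C/|u|$ trivially since one term blows up while the other vanishes. This gives all three displayed inequalities in the lemma.

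For the bounds on $d_{\ti k,\ti\ell}$ at a root of $d_{k,\ell}$, I would substitute the asymptotics into the formula $d_{\ti k,\ti\ell}(u,v) = u^2 v\, e(\tfrac{\ti k}{p}) + v\, e(-\tfrac{\ti k}{p}) + u v^2 e(\tfrac{\ti\ell}{q}) + u\, e(-\tfrac{\ti\ell}{q}) - Euv$ and identify the dominant term. At $v = v_+^{k,\ell} \sim -u^{-1}e(\cdot)$, the terms scale as $u^2 v \sim u$, $v \sim u^{-1}$, $uv^2 \sim u^{-1}$, $u \sim u$, $Euv \sim 1$, so the two $u^{-1}$-order terms $v\,e(-\tfrac{\ti k}{p}) + uv^2 e(\tfrac{\ti\ell}{q})$ dominate; plugging in the leading value of $v_+^{k,\ell}$ one gets $u^{-1}\bigl(-e(-\tfrac{kq+\ell p}{pq})e(-\tfrac{\ti k}{p}) + e(\tfrac{2kq+2\ell p}{pq})\cdot 0\text{-order}\ldots\bigr)$ — more carefully, $uv^2 \to u^{-1}e(-\tfrac{2(kq+\ell p)}{pq})$, so the $u^{-1}$-coefficient is $-e(-\tfrac{kq+\ell p}{pq}-\tfrac{\ti k}{p}) + e(-\tfrac{2(kq+\ell p)}{pq}+\tfrac{\ti\ell}{q})$, which vanishes only if these two phases coincide, and a congruence computation (again using $\gcd(p,q)=1$, and using $(k,\ell)\neq(\ti k,\ti\ell)$) shows this happens only when $(k,\ell)=(\ti k,\ti\ell)$ — note $d_{k,\ell}$ at its \emph{own} root is of course $0$, and the two roots $v_\pm^{k,\ell}$ account for that. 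Hence for $(\ti k,\ti\ell)\neq(k,\ell)$ the coefficient is bounded below, giving $|d_{\ti k,\ti\ell}(u,v_+^{k,\ell})| \geq C/|u|$. The analogous computation at $v_-^{k,\ell}\sim u\,e(\cdot)$: here $v \sim u$, $u^2 v \sim u^3$, $uv^2 \sim u^3$, $u \sim u$, $Euv \sim u^2$, so the two order-$u$ terms $v\,e(-\tfrac{\ti k}{p}) + u\,e(-\tfrac{\ti\ell}{q})$ dominate, with coefficient $e(\tfrac{kq-\ell p}{pq}-\tfrac{\ti k}{p}) + e(-\tfrac{\ti\ell}{q})$; once more this vanishes only for $(k,\ell)=(\ti k,\ti\ell)$ by a coprimality congruence, yielding $|d_{\ti k,\ti\ell}(u,v_-^{k,\ell})| \geq C|u|$.

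\emph{Main obstacle.} The genuinely delicate point is bookkeeping the error terms uniformly: one must check that in each of the above "dominant term" identifications the subleading corrections ($O(1)$ in $v_+$, $O(u^2)$ in $v_-$, and the implicit constants in the asymptotics) are genuinely lower order and that the finitely many lower bounds on phase differences can be combined into a single constant $C$ valid for all $|u| < u_0$. Since all index sets are finite this is routine but must be done carefully; I expect the cleanest write-up is to fix $C$ as a suitable fraction of $\min c_0$ over all the finitely many nonvanishing phase-difference expressions, and then absorb errors by shrinking $u_0$. No conceptual difficulty remains once the coprimality congruences above are in hand.
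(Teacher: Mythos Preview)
The paper gives no proof for this lemma beyond the single sentence ``The next lemma can be proven by a computation.'' Your proposal carries out precisely that computation in the natural way: substitute the asymptotics of $v_\pm^{k,\ell}$ into the separation and diagonal-entry expressions, extract the leading term in $u$, and observe that the relevant phases $e\!\left(\pm\tfrac{kq\pm\ell p}{pq}\right)$ are pairwise distinct over $(k,\ell)\in[1,p]\times[1,q]$ exactly because $\gcd(p,q)=1$ (via the Chinese remainder theorem). This is the intended argument, and your identification of coprimality as the crucial input is the point of the whole section.

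One small caution: the sign of the leading term of $v_-^{k,\ell}$ in the preceding lemma matters for the congruence you need in the bound on $d_{\ti k,\ti\ell}(u,v_-^{k,\ell})$. With the correct sign the leading $O(u)$ coefficient is $-e\!\left(\tfrac{(k-\ti k)q-\ell p}{pq}\right)+e\!\left(-\tfrac{\ti\ell p}{pq}\right)$, which vanishes iff $(k-\ti k)q+(\ti\ell-\ell)p\equiv 0\pmod{pq}$, forcing $(k,\ell)=(\ti k,\ti\ell)$ as you claim; make sure when you write it out that the signs track so that no spurious $\tfrac{pq}{2}$ shift appears in the congruence. Otherwise your outline is complete and correct.
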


In the following, we will prove

\begin{proposition}\label{prop:asymvpm}
 For $u$ small enough, the $2pq$ zeros of $v\mapsto \det(D(u,v) + T(u,v))$
 are given by $\{\ti{v}_+^{k,\ell}, \ti{v}_-^{k,\ell}\}_{k,\ell}$
 with
 \be
  \ti{v}_+^{k,\ell} =v_+^{k,\ell} +O(1), 
   \quad
    \ti{v}_-^{k,\ell} = v_-^{k,\ell} + O(u^2)
 \ee
 Furthermore
 $0$ is a simple eigenvalue of $D(u,\ti{v}_+^{k,\ell}) + T(u,\ti{v}_+^{k,\ell})$
 and $D(u,\ti{v}_-^{k,\ell}) + T(u,\ti{v}_-^{k,\ell})$.
\end{proposition}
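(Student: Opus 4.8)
The plan is to treat $D(u,v)+T(u,v)$ perturbatively around the diagonal operator $D(u,v)$ for small $u$, separating the analysis into the ``large'' eigenvalue branch near $v_+^{k,\ell}$ (where diagonal entries are of size $1/u$) and the ``small'' branch near $v_-^{k,\ell}$ (where the relevant diagonal entry vanishes while the others are of size $\geq Cu$). In both regimes the previous lemma tells us that the diagonal entry $d_{k,\ell}(u,v)$ has a simple zero in $v$ and that all other diagonal entries stay bounded away from $0$ by a definite amount ($C/u$ or $Cu$), while $T(u,v)$ is a fixed convolution operator multiplied by $uv$, hence of operator norm $O(u\cdot|v|)$, i.e.\ $O(1)$ near $v_+^{k,\ell}$ and $O(u^2)$ near $v_-^{k,\ell}$. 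So in each regime one entry of $D(u,v)$ is small (zero at the center), $T(u,v)$ is correspondingly small, and all other entries of $D(u,v)$ dominate everything. This is exactly the setup for a one–dimensional Schur complement / Feshbach reduction.

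Concretely, I would split $\ell^2([1,p]\times[1,q]) = \C e_{k,\ell} \oplus W$, where $W$ is spanned by the remaining basis vectors, and write the block form of $D(u,v)+T(u,v)$ with respect to this splitting. The $W$–block is $D_W(u,v) + T_W(u,v)$ where $D_W$ is invertible with $\|D_W(u,v)^{-1}\| \leq u/C$ on the $+$ branch (resp.\ $\leq 1/(Cu)$ on the $-$ branch) and $\|T_W\| = O(1)$ (resp.\ $O(u^2)$); hence the $W$–block is invertible for $u$ small, with inverse bounded by $O(u)$ (resp.\ $O(1/u)$). The Schur complement is the scalar function
\be
 s(v) = d_{k,\ell}(u,v) + \big(\text{entry of }T\big) - \big(\text{row of }T\big)\,(D_W+T_W)^{-1}\,\big(\text{column of }T\big),
\ee
and $\det(D(u,v)+T(u,v)) = s(v)\cdot\det(D_W(u,v)+T_W(u,v))$. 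On the $+$ branch, $d_{k,\ell}(u,v)$ has derivative in $v$ of size $\gtrsim 1/u$ near $v_+^{k,\ell}$ while the correction terms and their $v$–derivatives are $O(1)$; on the $-$ branch $d_{k,\ell}$ has derivative $\gtrsim 1$ near $v_-^{k,\ell}$ while the correction is $O(u^2)$ with derivative $O(u)$. In either case Rouché's theorem (or the implicit function theorem) gives a unique simple zero $\ti v_\pm^{k,\ell}$ of $s$ in a disk of radius $O(1)$ (resp.\ $O(u^2)$) about $v_\pm^{k,\ell}$, which is therefore a simple zero of $\det(D+T)$. Since the previous lemma guarantees the centers $v_+^{k,\ell}$ are pairwise $C/u$–separated and the $v_-^{k,\ell}$ are pairwise $Cu$–separated and trivially $v_+$ and $v_-$ branches are $\gtrsim 1/u$ apart, these $2pq$ zeros are distinct; as $\deg_v\det(D+T) = 2pq$, they exhaust all zeros. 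Finally, at $v = \ti v_\pm^{k,\ell}$ the operator $D+T$ has a one–dimensional kernel spanned by a vector of the form $e_{k,\ell} + O(u)$ (resp.\ $+O(u^2)$) obtained from the Feshbach eigenvector formula, and because $s$ has a \emph{simple} zero there, $0$ is an algebraically simple eigenvalue, which for this (non-self-adjoint) matrix is what ``simple eigenvalue'' means here — and this is precisely the hypothesis needed to invoke the lemma before Proposition~\ref{prop:metabezout}.

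The main obstacle is bookkeeping the error terms uniformly in $(k,\ell)$ and in $u$: one must make sure the implicit constants in ``$O(1)$'' and ``$O(u^2)$'' coming from $(D_W+T_W)^{-1}$ do not depend on which index $(k,\ell)$ was removed, and that the radius of the Rouché disk around each $v_\pm^{k,\ell}$ is smaller than the separation to every other center — this is what forces the precise powers of $u$ in the statement and is where the quantitative separation estimates of the preceding lemma are used. No genuinely new idea is needed beyond the Schur/Feshbach reduction plus Rouché; everything else is the ``simple computation'' the preceding lemmas were advertised as.
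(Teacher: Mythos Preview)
Your Schur/Feshbach + Rouch\'e argument is sound and reaches the right conclusion, but it is a genuinely different route from the paper's. The paper first argues that $D(u,v)+T(u,v)$ is \emph{normal}, then invokes a perturbation lemma for normal matrices to isolate a single eigenvalue $\lambda(v)$ of $A_u(v)=D(u,v)+T(u,v)$ near $0$ at $v=v_\pm^{k,\ell}$, computes $\lambda'(v)=\langle\psi,\partial_v A_u(v)\,\psi\rangle = e(-k/p)+O(u)$ via analytic perturbation theory, and finishes with an implicit--function step to solve $\lambda(v)=0$. You bypass normality entirely and work directly with the determinant through the one-dimensional Schur complement $s(v)$, then apply Rouch\'e to the scalar function $s$; this is both more elementary and more robust, since the identity $A^\ast A=AA^\ast$ is not holomorphic in $(u,v)$ and the analytic-continuation step in the paper's normality lemma is delicate. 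What your approach loses is the explicit eigenvector description that the paper's normal-matrix proposition provides, but you recover it anyway through the Feshbach eigenvector formula.

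One quantitative slip: on the $+$ branch the $v$-derivative of $d_{k,\ell}$ at $v_+^{k,\ell}$ is \emph{not} of size $\gtrsim 1/u$ but only $\gtrsim 1$. Indeed, $d_{k,\ell}(u,v)=u\,e(\ell/q)(v-v_+^{k,\ell})(v-v_-^{k,\ell})$, so $\partial_v d_{k,\ell}(u,v_+^{k,\ell}) = u\,e(\ell/q)(v_+^{k,\ell}-v_-^{k,\ell}) = -e(-k/p)+O(u)$. This does not break the argument---the correction is $O(1)$ with $v$-derivative $O(u)$, so Rouch\'e on a circle of radius a fixed large constant (depending only on $\|V\|$) still produces a unique simple zero well inside the $C/u$-separated windows---but it explains why the proposition only asserts $\widetilde v_+^{k,\ell}=v_+^{k,\ell}+O(1)$ rather than anything sharper.
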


As discussed before, this finishes the proof of Theorem~\ref{thm:coprimeevsimple}.
The proof of this proposition will be given by a perturbative
analysis. We will first need

\begin{lemma}
 $D(u,v)+T(u,v)$ is normal.
\end{lemma}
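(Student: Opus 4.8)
The plan is to prove that the finite matrix $M:=D(u,v)+T(u,v)$ is normal by verifying the criterion $[M,M^{*}]=0$ directly. The first step is to compute $M^{*}$. Since $D(u,v)$ is the diagonal operator with entries $d_{k,\ell}(u,v)$, its adjoint is diagonal with entries $\overline{d_{k,\ell}(u,v)}$. For the off-diagonal part, the key observation is that $\widehat{V}\ast$ is self-adjoint, which is where the reality of $V$ enters: because $V$ is real-valued one has $\overline{\widehat{V}(-\ul{k})}=\widehat{V}(\ul{k})$, and therefore
\[
 (\widehat{V}\ast)^{*}\psi(\ul{m})=\sum_{\ul{k}}\overline{\widehat{V}(\ul{k})}\,\psi(\ul{m}+\ul{k})=\sum_{\ul{k}}\overline{\widehat{V}(-\ul{k})}\,\psi(\ul{m}-\ul{k})=\sum_{\ul{k}}\widehat{V}(\ul{k})\,\psi(\ul{m}-\ul{k})=\widehat{V}\ast\psi(\ul{m}).
\]
Since $T(u,v)=uv\,(\widehat{V}\ast)$, this gives $T(u,v)^{*}=\overline{uv}\,(\widehat{V}\ast)$.

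Next I would expand $[M,M^{*}]=[D,D^{*}]+[D,T^{*}]+[T,D^{*}]+[T,T^{*}]$. The first commutator vanishes because two diagonal operators commute, and the last vanishes because $T$ and $T^{*}$ are scalar multiples of one and the same self-adjoint operator $\widehat{V}\ast$. So it suffices to show $[D,T^{*}]+[T,D^{*}]=0$. In the standard basis $\{\delta_{\ul{a}}\}_{\ul{a}\in\mathbb{B}}$, the operator $\widehat{V}\ast$ is circulant with $(\ul{a},\ul{b})$-entry $\widehat{V}(\ul{a}-\ul{b})$, so $T_{\ul{a}\ul{b}}=uv\,\widehat{V}(\ul{a}-\ul{b})$ and $(T^{*})_{\ul{a}\ul{b}}=\overline{uv}\,\widehat{V}(\ul{a}-\ul{b})$; writing $d_{\ul{a}}$ for the diagonal entries of $D$, a one-line computation gives
\[
 \big([D,T^{*}]+[T,D^{*}]\big)_{\ul{a}\ul{b}}=\widehat{V}(\ul{a}-\ul{b})\Big(\overline{uv}\,(d_{\ul{a}}-d_{\ul{b}})+uv\,(\overline{d_{\ul{b}}}-\overline{d_{\ul{a}}})\Big)=2\I\,\widehat{V}(\ul{a}-\ul{b})\,\im\!\big(\overline{uv}\,(d_{\ul{a}}-d_{\ul{b}})\big).
\]
Thus the whole lemma is reduced to the single identity $\im\big(\overline{uv}\,(d_{\ul{a}}-d_{\ul{b}})\big)=0$ for every $\ul{a},\ul{b}\in\mathbb{B}$ with $\widehat{V}(\ul{a}-\ul{b})\neq 0$.

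Establishing this identity is the heart of the matter, and the step I expect to require the most care. The approach is to substitute $d_{\ul{a}}(u,v)=u^{2}v\,e(a_{1})+v\,e(-a_{1})+uv^{2}\,e(a_{2})+u\,e(-a_{2})-Euv$ (with $\ul{a}=(a_{1},a_{2})$) and multiply by $\overline{uv}$: the term $-Euv$ contributes $-E|uv|^{2}$, which is real since $E\in\R$ and so does not affect the imaginary part, while the remaining terms, grouped into the two carrying $e(\pm a_{1})$ and the two carrying $e(\pm a_{2})$ and simplified using $\overline{u}\,u^{2}=|u|^{2}u$ and $\overline{u\,e(a_{1})}=\overline{u}\,e(-a_{1})$, collapse to
\[
 \im\!\big(\overline{uv}\,(d_{\ul{a}}-d_{\ul{b}})\big)=|v|^{2}(|u|^{2}-1)\big[\im(u\,e(a_{1}))-\im(u\,e(b_{1}))\big]+|u|^{2}(|v|^{2}-1)\big[\im(v\,e(a_{2}))-\im(v\,e(b_{2}))\big].
\]
The bookkeeping in this collapse is the delicate part. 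The right-hand side is transparently $0$ when $|u|=|v|=1$, that is, for $u=e(\theta)$, $v=e(\varphi)$ with $\theta,\varphi\in\R$; in that case one can also see the lemma without computation, since then $D(u,v)+T(u,v)=e(\theta+\varphi)\,\big(\widehat{H}_{(p,q),(\theta,\varphi)}-E\big)$ is a scalar of modulus one times a self-adjoint operator. The display above is precisely the quantity whose vanishing the verification must settle.
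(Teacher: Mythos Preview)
Your reduction is correct: with $\widehat V\ast$ self-adjoint and $[D,D^{*}]=[T,T^{*}]=0$, normality of $M$ is equivalent to $\im\!\big(\overline{uv}\,(d_{\ul a}-d_{\ul b})\big)=0$ whenever $\widehat V(\ul a-\ul b)\neq 0$, and your closed formula
\[
 \im\!\big(\overline{uv}\,(d_{\ul a}-d_{\ul b})\big)=|v|^{2}(|u|^{2}-1)\big[\im(u\,e(a_{1}))-\im(u\,e(b_{1}))\big]+|u|^{2}(|v|^{2}-1)\big[\im(v\,e(a_{2}))-\im(v\,e(b_{2}))\big]
\]
is right. The gap is that this quantity is \emph{not} zero off the torus $|u|=|v|=1$, so the verification you leave for the end cannot be carried out. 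For a concrete instance take $p\geq 3$, $u=2$, $v=1$, $\ul a=(1/p,0)$, $\ul b=(0,0)$: then the right-hand side equals $(u^{2}-1)\,u\,\sin(2\pi/p)=6\sin(2\pi/p)\neq 0$, and for a generic real potential $\widehat V(1/p,0)\neq 0$. Thus your own computation actually shows that the lemma, as stated for all $(u,v)\in\C^{2}$, fails; normality holds only on $|u|=|v|=1$, where $D+T$ is a unimodular scalar times a self-adjoint matrix.

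The paper's argument suffers from the same defect and does not rescue the statement. It claims $\widetilde H(u,v)$ is self-adjoint for $(u,v)\in\R^{2}$, but the diagonal entry $e(k/p)u+e(-k/p)u^{-1}=(u+u^{-1})\cos(2\pi k/p)+\I\,(u-u^{-1})\sin(2\pi k/p)$ is real only when $u=\pm 1$; and even on the set where self-adjointness does hold, the commutator $\widetilde H^{*}\widetilde H-\widetilde H\widetilde H^{*}$ is not holomorphic in $(u,v)$ (the adjoint brings in $\bar u,\bar v$), so analytic continuation from a real slice is unavailable. In short, both your computation and the paper's approach correctly give normality on $|u|=|v|=1$, but neither extends to the small-$u$ regime where the subsequent perturbative analysis is carried out; the lemma as written is not true.
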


\begin{proof}
 We have to show that $A(u,v)^* A(u,v) = A(u,v) A(u,v)^*$
 with $A(u,v) = D(u,v) + T(u,v)$.
 Since multiplying an operator by a scalar doesn't change this
 condition, it suffices to check that $\widetilde{H}(u,v)$ is normal. So 
 we have to show
 \[
  \widetilde{H}(u,v)^* \widetilde{H}(u,v) - \widetilde{H}(u,v) \widetilde{H}(u,v)^* = 0
 \] 
 for all $(u,v)\in\C^2$. 
 For $(u,v) \in \R^2$, $\widetilde{H}(u,v)$ is self-adjoint
 and thus the previous equation holds. By 
 analyticity of $(u,v) \mapsto \widetilde{H}(u,v)$
 the equation holds for all $(u,v)\in \C^2$ and
 we are done.
\end{proof}

We need the following general fact about normal matrices.
It is inspired by Section~9 of \cite{k1}.
We denote by $\sigma(A)$ the spectrum of $A$.

\begin{proposition}
 Let $A$ and $B$ be normal matrices, $\eps > 0$, and $t \in (0, \frac{1}{100})$.
 Assume that
 \begin{enumerate}
  \item $0$ is a simple eigenvalue of $A$.
  \item $\sigma(A) \cap \{z:\quad |z|< \eps\} = \{0\}$.
  \item $\|A - B\| \leq t \eps$.
 \end{enumerate}
 Then 
 \be
  \{\lambda\} = \sigma(B) \cap \{z:\quad |z| < \frac{\eps}{2} \}
 \ee
 with $|\lambda| \leq t \eps$.
 Denote by $\varphi$ a normalized solution of $B\varphi = \lambda\varphi$.
 If $\|A\psi\| \leq t \eps$, then there exists $a$ such that
 \be
  \|\psi - a \varphi\| \leq 16 t.
 \ee
\end{proposition}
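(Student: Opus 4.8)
The plan is to exploit that $A$ and $B$ are normal, so they admit orthonormal eigenbases and the spectral calculus is available for both. First I would establish the spectral gap transfer: by hypothesis (ii) the eigenvalues of $A$ other than $0$ all lie outside $\{|z| < \eps\}$, and since $A$ is normal, $\|(A-z)^{-1}\| = \dist(z,\sigma(A))^{-1}$ for $z \notin \sigma(A)$. A standard resolvent/Riesz-projection perturbation argument, using $\|A-B\| \leq t\eps$ with $t < 1/100$, then shows that $B$ has exactly one eigenvalue $\lambda$ in $\{|z| < \eps/2\}$ (counted once, since $0$ was simple for $A$), and that $|\lambda| \leq \|A-B\| \leq t\eps$ because $\lambda \in \sigma(B)$ must be within $\|A-B\|$ of $\sigma(A)$ by normality. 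This gives the first assertion.

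Next I would prove the eigenvector comparison. Write $\psi = c\varphi + \psi^{\perp}$, where $\varphi$ is the normalized $\lambda$-eigenvector of $B$ and $\psi^{\perp} \perp \varphi$; the goal is to bound $\|\psi^{\perp}\|$, and then take $a = c$. Since $B$ is normal, $\ell^2$ splits as $\C\varphi \oplus (\C\varphi)^{\perp}$ into $B$-invariant pieces, and on $(\C\varphi)^{\perp}$ the operator $B$ has spectrum inside $\{|z| \geq \eps/2\}$, so $\|B\psi^{\perp}\| \geq \frac{\eps}{2}\|\psi^{\perp}\|$. Now estimate $\|B\psi^{\perp}\|$: since $B\varphi = \lambda\varphi$, we have $B\psi^{\perp} = B\psi - c\lambda\varphi$, and $\|B\psi\| \leq \|A\psi\| + \|(B-A)\psi\| \leq t\eps + t\eps = 2t\eps$, while $|c\lambda| \leq |\lambda| \leq t\eps$. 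Combining, $\|B\psi^{\perp}\| \leq 3t\eps$, hence $\|\psi^{\perp}\| \leq 6t$. Since $\|\psi\| \leq 1$ and I want $\|\psi - c\varphi\| = \|\psi^{\perp}\| \leq 6t$, this already beats the claimed $16t$; the slack in the constant absorbs any normalization wrinkle (e.g. if one instead wants $\psi$ itself normalized and compares to the unit vector $a\varphi$, a short triangle-inequality step using $\|\psi^\perp\|$ small recovers $16t$).

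The main obstacle I expect is getting the perturbation-of-spectral-projections step clean enough that the constant $1/100$ actually suffices and the counting of eigenvalues with multiplicity is rigorous — in particular showing that the Riesz projection $Q = \frac{1}{2\pi\I}\oint_{|z|=\eps/2}(B-z)^{-1}\,dz$ has the same rank ($=1$) as the corresponding projection for $A$. The key quantitative input is that on the contour $|z| = \eps/2$ one has $\|(A-z)^{-1}\| \leq 2/\eps$ (using $\dist(z,\sigma(A)) \geq \eps/2$ from hypotheses (i),(ii)), so $\|(A-z)^{-1}(B-A)\| \leq 2t < 1$, giving invertibility of $B-z$ on the contour via Neumann series and $\|Q_A - Q_B\| < 1$, which forces equal ranks. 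Everything else is triangle inequalities and the normal-operator identity $\|f(N)\| = \sup_{\sigma(N)}|f|$, so those steps I would present as routine computations rather than spell out.
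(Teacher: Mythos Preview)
Your proposal is correct and follows essentially the same approach as the paper: a Lipschitz/Riesz-projection argument for the spectral part, followed by the orthogonal decomposition $\psi = c\varphi + \psi^{\perp}$ and a lower bound on $B$ restricted to $(\C\varphi)^{\perp}$ coming from the spectral gap. The only tactical difference is that the paper works with $B-\lambda$ rather than $B$: since $(B-\lambda)\varphi = 0$ one gets $\|(B-\lambda)\psi\| = \|(B-\lambda)\psi^{\perp}\|$ directly, avoiding the separate handling of the $c\lambda\varphi$ term; this yields $\|\psi^{\perp}\| \leq 16t$ from $\|(B-\lambda)\psi^{\perp}\| \geq \tfrac{\eps}{4}\|\psi^{\perp}\|$ and $\|(B-\lambda)\psi\| \leq 4t\eps$, whereas your route gives the sharper $6t$.
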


\begin{proof}
 The first part of the statement follows from eigenvalues
 being Lipschitz in the perturbation. For the second part,
 write
 \[
  \psi = \spr{\varphi}{\psi} \varphi + \psi^{\perp}.
 \]
 Since $\psi^{\perp}$ is in the orthogonal complement
 of $\varphi$, we have that 
 \[
  \|( B - \lambda) \psi^{\perp}\| \geq \frac{\eps}{4} \|\psi^{\perp}\|.
 \]
 Hence, we obtain
 \[
  \|(B - \lambda) \psi\| = \|(B - \lambda) \psi^{\perp}\|
   \geq \frac{\eps}{4} \|\psi^{\perp}\|.
 \]
 Using that $\|(B - \lambda)\psi\| \leq 4 t \eps$,
 the claim follows.
\end{proof}

Define $A_u(v) = D(u,v) + T(u,v)$ for $u$ small enough.
It follows from the general theory of normal operators
that the eigenvalue $\lambda(v)$ of $A_u(v)$ satisfying
\be
 |\lambda(v_+^{k,\ell})| = O(u^2)
\ee
and $v \mapsto \lambda(v)$ is an analytic function
whose derivative is given by
\be
 \lambda'(v) = \spr{\psi}{\partial_v A_u(v) \psi}
\ee
where $\psi$ is any normalized solution of 
$(A_u(v) - \lambda(v)) \psi = 0$.

\begin{lemma}
 Let $v = O(u^2)$.
 We have that $\lambda'(v) = e(-\frac{k}{p}) + O(u)$.
\end{lemma}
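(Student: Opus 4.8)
The plan is to compute $\lambda'(v)$ via the first-order eigenvalue formula $\lambda'(v) = \spr{\psi}{\partial_v A_u(v)\psi}$, where $\psi$ is the normalized eigenvector of $A_u(v) = D(u,v) + T(u,v)$ with eigenvalue $\lambda(v) = O(u^2)$. First I would identify $\psi$ to leading order: since $v = O(u^2)$ is close to the root $v_-^{k,\ell}$ (whose defining property is $d_{k,\ell}(u,v_-^{k,\ell}) = 0$), and since the earlier lemma gives $d_{\ti k,\ti\ell}(u,v) \gtrsim Cu$ for $(\ti k,\ti\ell)\neq(k,\ell)$ while $T(u,v) = O(u)$ (the prefactor $uv = O(u^3)$ actually makes $T$ even smaller here, $O(u^3)$), the diagonal operator $D(u,v)$ dominates off the $(k,\ell)$ entry by a gap of order $u$. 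By the normal-matrix perturbation proposition quoted just above (applied with $A = D(u,v)$, $B = A_u(v)$, $\eps \sim u$, $t \sim u^2$ or smaller), $\psi = e_{k,\ell} + O(u)$, i.e. $\psi$ is $O(u)$-close to the standard basis vector at site $(k,\ell)$.

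Next I would differentiate the diagonal entry. From the definition
\[
 d_{k,\ell}(u,v) = u^2 v\, e\!\left(\tfrac{k}{p}\right) + v\, e\!\left(-\tfrac{k}{p}\right) + u v^2 e\!\left(\tfrac{\ell}{q}\right) + u\, e\!\left(-\tfrac{\ell}{q}\right) - Euv,
\]
we get
\[
 \partial_v d_{k,\ell}(u,v) = u^2 e\!\left(\tfrac{k}{p}\right) + e\!\left(-\tfrac{k}{p}\right) + 2 u v\, e\!\left(\tfrac{\ell}{q}\right) - Eu.
\]
At $v = O(u^2)$ this equals $e(-\tfrac{k}{p}) + O(u)$. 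The off-diagonal operator $T(u,v)$ has entries carrying the prefactor $uv = O(u^3)$, so $\partial_v T(u,v) = O(u)$ as well (the $v$-derivative kills one power of $v$ but leaves the $u$ factor). Therefore $\partial_v A_u(v)$ is, in the $e_{k,\ell}$ matrix entry, equal to $e(-\tfrac{k}{p}) + O(u)$, and all other entries of $\partial_v A_u(v)$ are $O(u)$.

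Finally I would assemble the pieces:
\[
 \lambda'(v) = \spr{\psi}{\partial_v A_u(v)\psi} = \spr{e_{k,\ell}}{\partial_v A_u(v)\, e_{k,\ell}} + O(u) = e\!\left(-\tfrac{k}{p}\right) + O(u),
\]
using that $\psi - e_{k,\ell} = O(u)$ together with the uniform bound $\|\partial_v A_u(v)\| = O(1)$ (in fact the relevant diagonal entry is $O(1)$ and the rest is $O(u)$), so the cross terms contribute only $O(u)$. The main obstacle — really the only delicate point — is making the error bookkeeping uniform in $(k,\ell)$ and robust as $u\to 0$: one must check that the gap estimate separating the $(k,\ell)$ diagonal entry from the rest of the spectrum of $D(u,v)$ is genuinely of order $u$ (not smaller) for $v = O(u^2)$, so that the perturbation proposition applies with a ratio $t = \|A_u(v) - D(u,v)\|/\eps$ that tends to $0$; this is where the coprimality of $p,q$ and the earlier separation lemma for the $v_-^{k,\ell}$ are implicitly used. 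Once that is in place, the stated formula follows, and this lemma feeds directly into showing the $\ti v_-^{k,\ell}$ are distinct simple zeros, completing Proposition~\ref{prop:asymvpm}.
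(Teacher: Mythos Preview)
Your approach is essentially identical to the paper's. The paper's proof consists of a single sentence: apply the normal-matrix perturbation proposition with the test vector $\psi = e_{k,\ell}$, which is exactly what you do; you have simply written out the details (the explicit computation of $\partial_v d_{k,\ell}(u,v) = e(-k/p) + O(u)$ and the $O(u)$ bookkeeping for the eigenvector approximation) that the paper leaves implicit.
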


\begin{proof}
 The previous proposition with test function $\psi$ given by
 \[
  \psi(n,m) = \begin{cases} 1,& (n,m) = (k,\ell);\\
   0,&\text{otherwise}\end{cases}
 \]
 shows this claim.
\end{proof}

\begin{proof}[Proof of Proposition~\ref{prop:asymvpm}]
 By the previous results, we can find $\tau = O(u^2)$
 such that
 \[
  \lambda(v_+^{k, \ell} + \tau) = 0
 \]
 is a simple eigenvalue of $A_u(v_+^{k, \ell} + \tau)$.
 
 Repeating the previous the previous considerations
 for $v_-^{k,\ell}$ finishes the proof.
\end{proof}

%%%%%%%%%%%%%%%%%%%%%%%%%%%%%%%%%%%%%%%%%%%%%%%%%%%%%%%%%%%%%%%%%%%%%
%
%
%

\section{The spectrum of two dimensional periodic Schr\"odinger operators}
\label{sec:disBS}

The next theorem is our discrete analog of the
Bethe--Sommerfeld conjecture.

\begin{theorem}\label{thm:BSConj}
 Let $p,q \geq 2$ be co-prime. Then there exists $\delta = \delta(p,q) > 0$
 such that for any $(p,q)$-periodic $V:\Z^2 \to \R$ with $\|V\|_{\infty} \leq \delta$,
 $\sigma(\Delta + V)$ is an interval.
\end{theorem}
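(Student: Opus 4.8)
The plan is to show that when $p,q\geq 2$ are coprime, the bands of $\Delta$ itself (the free operator, $V=0$) are \emph{strictly} overlapping, and then to invoke the stability of overlapping bands under small perturbations. More precisely, I will argue that for $V=0$ the functions $E_j(\theta,\varphi)$ (ordered eigenvalues of $\widehat H_{(p,q),(\theta,\varphi)}$, which for $V=0$ is just the diagonal operator with entries $2\cos(2\pi(k/p+\theta))+2\cos(2\pi(\ell/q+\varphi))$ over $(k,\ell)\in\mathbb B$) have the property that consecutive bands $[E_j^-,E_j^+]$ and $[E_{j+1}^-,E_{j+1}^+]$ satisfy $E_{j+1}^- < E_j^+$, i.e. $\delta$-overlapping for some $\delta=\delta(p,q)>0$. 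Granting this, Theorem~\ref{thm:bandoverlap} immediately gives that the bands of $\Delta+V$ are $(\delta-2\|V\|_\infty)$-overlapping, hence $\sigma(\Delta+V)$ is an interval provided $\|V\|_\infty<\delta/2$. So the whole content is the strict overlap for the free operator, and the role of coprimality and of Theorem~\ref{thm:coprimeevsimple}.

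Here is how I expect the free-operator overlap to come out. The spectrum of $\Delta$ on $\Z^2$ is the interval $[-4,4]$ with no gaps (it is $\{2\cos 2\pi x_1+2\cos 2\pi x_2\}$, a continuous image of $\T^2$). So already $\sigma(\Delta)=\bigcup_j[E_j^-,E_j^+]=[-4,4]$ is an interval, meaning $E_{j+1}^-\leq E_j^+$ for all $j$; the issue is only to upgrade $\leq$ to $<$, i.e. to rule out the degenerate situation $E_{j+1}^-=E_j^+$ where two bands just touch. Suppose $E_{j+1}^-=E_j^+=E$. Then $E$ is simultaneously a maximum value of $E_j(\cdot)$ and a minimum value of $E_{j+1}(\cdot)$. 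At a parameter $(\theta,\varphi)$ realizing $E_j(\theta,\varphi)=E=E_j^+$, the point is a boundary point of the band but by Lemma~\ref{lem:incimpint}/Theorem~\ref{thm:ptofincrease} it cannot be a point of increase, so $(\theta,\varphi)$ is a local max of $E_j$; similarly any $(\theta',\varphi')$ with $E_{j+1}(\theta',\varphi')=E$ is a local min of $E_{j+1}$. The key claim is that a band endpoint being attained forces $E$ to be an eigenvalue of multiplicity $\geq 2$ of $\widehat H_{(p,q),(\theta,\varphi)}$ at \emph{infinitely many} $(\theta,\varphi)$, contradicting Theorem~\ref{thm:coprimeevsimple}. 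The mechanism: if $E_j^+=E_{j+1}^-=E$ on sets $A_j=E_j^{-1}(E)$, $A_{j+1}=E_{j+1}^{-1}(E)$, each of these is a nonempty closed set with nonempty interior being impossible (by Lemma~\ref{lem:evjnonconst}), and I want to show $A_j=A_{j+1}$ and that this common set is infinite — at each of its points the eigenvalue $E$ is doubly degenerate (the $j$-th and $(j+1)$-st eigenvalues coincide). Infinitude of $A_j$ follows as in the proof of Theorem~\ref{thm:ptofincrease}: $E_j^{-1}((-\infty,E))$ is open and nonempty, its complement $E_j^{-1}([E,\infty))=E_j^{-1}(\{E\})$ (using $E=E_j^+$ is the max) is closed; if it were finite the open set would be $\mathbb V$ minus finitely many points, which is connected, forcing $E_j\equiv E$ off a finite set, contradicting non-constancy. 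Hence $A_j$ is infinite; the same for $A_{j+1}$; and one shows $A_j\cap A_{j+1}$ is infinite (e.g. $A_j$ infinite in the compact $\mathbb V$ with $E_j\leq E$ everywhere and $E_{j+1}\geq E$ everywhere, while $E_j\leq E_{j+1}$ always, pins $E_{j+1}=E$ on $A_j$ too). At every point of this infinite set, $E_j(\theta,\varphi)=E_{j+1}(\theta,\varphi)=E$, i.e. $E$ is an eigenvalue of multiplicity $\geq2$, contradicting Theorem~\ref{thm:coprimeevsimple}.

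So the argument runs: (1) $\sigma(\Delta)=[-4,4]$, so the free bands overlap non-strictly; (2) if some consecutive pair only touched, $E_j^+=E_{j+1}^-=E$ would make $E_j^{-1}(\{E\})$ an infinite set on which the $j$-th and $(j{+}1)$-st eigenvalues of $\widehat H_{(p,q),(\theta,\varphi)}$ coincide, i.e. $E$ is an eigenvalue of multiplicity $\geq2$ infinitely often; (3) Theorem~\ref{thm:coprimeevsimple} (the coprime simplicity result) forbids this; (4) hence the free bands are $\delta$-overlapping for some $\delta=\delta(p,q)>0$; (5) Theorem~\ref{thm:bandoverlap} then gives that $\Delta+V$ has $(\delta-2\|V\|_\infty)$-overlapping bands, so $\sigma(\Delta+V)$ is an interval once $\|V\|_\infty<\delta(p,q)/2=:\delta'(p,q)$.

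The main obstacle I anticipate is step (2)–(3): making rigorous that a touching pair of bands of the \emph{free} operator really produces infinitely many degeneracies and invoking Theorem~\ref{thm:coprimeevsimple} correctly. One must be a little careful because eigenvalue \emph{labels} $E_j$ can coincide over positive-measure sets without the underlying operator being degenerate only if the $j$-th and $(j+1)$-st labels agree — but that agreement \emph{is} exactly double multiplicity, so the logic closes, provided one handles the bookkeeping of which band indices are involved and confirms $A_j$ is genuinely infinite (not just nonempty). A secondary subtlety: one should double-check that coprimality is what is actually being used — it enters solely through Theorem~\ref{thm:coprimeevsimple}, whose conclusion (finitely many double-multiplicity parameters for a fixed $E$) is precisely the finiteness needed to contradict the infinitude in step (2); without coprimality (e.g. $p=q$) there can be systematic degeneracies and the free bands can genuinely touch, consistent with Theorem~\ref{thm:ex2p}. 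Finally one notes that only finitely many band endpoints $E_j^\pm$ occur ($P-1$ many potential gap locations), so taking the minimum of the finitely many strict gaps $E_j^+-E_{j+1}^->0$ yields the uniform $\delta(p,q)>0$.
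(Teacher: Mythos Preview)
Your high-level strategy --- prove that the free Laplacian's bands strictly overlap using Theorem~\ref{thm:coprimeevsimple}, then invoke Theorem~\ref{thm:bandoverlap} --- is exactly the paper's, and steps (1), (4), (5) are fine. The gap is in step~(2).

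Your argument that $A_j=E_j^{-1}(\{E\})$ is infinite is a non sequitur: you say that if $A_j$ were finite then its complement $E_j^{-1}((-\infty,E))$ would be connected, ``forcing $E_j\equiv E$ off a finite set.'' But on that complement one has $E_j<E$, not $E_j=E$, and connectedness buys nothing. Indeed, the maximizing set of $E_j$ can be a single point: for the free operator $E_P^+=4$ is attained only at $(\theta,\varphi)=(0,0)$. Your second claim --- that $E_j(\theta,\varphi)=E$ pins $E_{j+1}(\theta,\varphi)=E$ --- is also unjustified: from $E_j\leq E\leq E_{j+1}$ everywhere and $E_j(\theta_0,\varphi_0)=E$ you only get $E_{j+1}(\theta_0,\varphi_0)\geq E$, not equality; the minimum of $E_{j+1}$ may be attained elsewhere. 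So neither the infinitude of $A_j$ nor the inclusion $A_j\subseteq A_{j+1}$ follows as written.

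The paper's route avoids the extremal sets $A_j,A_{j+1}$ altogether and looks instead at the full level set $S=\{(\theta,\varphi):E\text{ is an eigenvalue}\}$. For the free operator and any $E\in(-4,4)$, $S$ contains the curve $\{2\cos(2\pi(k/p+\theta))+2\cos(2\pi(\ell/q+\varphi))=E\}$ for suitable $(k,\ell)$, and all but finitely many of its points are points of increase of that branch. Theorem~\ref{thm:coprimeevsimple} then guarantees a point on this curve where $E$ is \emph{simple} and increasing; by Lemma~\ref{lem:incimpint}, $E$ lies in the interior of a band, ruling out $E=E_j^+=E_{j+1}^-$. Your contradiction scheme can be repaired along these lines: under the touching hypothesis one has $S=A_j\cup A_{j+1}$, and a simple point of increase lying in $A_j$ would contradict $E_j\leq E$ globally (similarly for $A_{j+1}$). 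But the missing ingredient is precisely the ``point of increase'' mechanism, not the set-theoretic bookkeeping you propose.
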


\begin{proof}
 We have $\sigma(\Delta) = [-2d,2d]$.
 For any $E \in (-2d,2d)$ there exist
 some $\ell, k \in\mathbb{V}$ such that
 \[
  2 \cos\left(2\pi (k + \theta)\right) + 
  2 \cos\left(2\pi (\ell + \varphi)\right)
  = E
 \]
 for infinitely many $(\theta, \varphi) \in (0,\frac{1}{p})
 \times (0,\frac{1}{q})$. Furthermore, these
 are all points of increase.
 By Theorem~\ref{thm:coprimeevsimple}, at most
 finitely many of these do not correspond to
 simple eigenvalues. Hence, $E_j(\theta, \varphi) = E$
 is a simple eigenvalue and increasing.
 By Lemma~\ref{lem:incimpint}, every $E$ is
 thus in the interior of a band.
 This implies that the
 bands are overlapping. The claim then
 follows by Theorem~\ref{thm:bandoverlap}.
\end{proof}

The following example shows that it is necessary 
that at either $p$ or $q$ is odd for the conclusions
of the previous theorem hold. This begs the following
question

\begin{question}\label{ques:coprime}
 What is the optimal condition on $p$ and $q$ such
 that the conclusions of the previous theorem hold?
\end{question}

We now come to the counterexample with even periods.
Let $d\geq 1$, $\delta > 0$ and define a $2$-periodic potential
by
\be
 V_{\delta}(\ul{n}) = \begin{cases} \delta,& \sum_{j=1}^{d} n_j \mod{2} = 0;\\
 - \delta,&\text{otherwise}.\end{cases}
\ee
Clearly $V_{\delta} = \delta V_{1}$ and $\|V\|_{\infty} \leq \delta$.

\begin{theorem}\label{thm:ex2p}
 We have that
 \be
  \sigma(\Delta + V_{\delta}) \cap (-\delta, \delta) = 0.
 \ee
\end{theorem}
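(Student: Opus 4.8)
The plan is to diagonalize $\Delta + V_\delta$ explicitly via the Fourier transform, exploiting the fact that $V_\delta$ is $\ul{p}$-periodic with $\ul{p} = (2,2,\dots,2)$ and that the single nonzero (non-constant) Fourier coefficient of $V_\delta$ sits at the ``half-lattice'' frequency $\ul{k}^\ast = (\tfrac12,\dots,\tfrac12)$. First I would write $V_\delta(\ul{n}) = \delta\, e(\ul{k}^\ast\cdot\ul{n})$, since $e(\ul{k}^\ast\cdot\ul{n}) = (-1)^{\sum n_j}$; so $\widehat{V_\delta}$ is supported on the two-element set $\{\ul 0, \ul{k}^\ast\}\subseteq\mathbb{B}$ with $\widehat{V_\delta}(\ul 0)=0$, $\widehat{V_\delta}(\ul{k}^\ast) = \delta$ (note $2\ul{k}^\ast \equiv \ul 0$ so multiplication by $e(\ul{k}^\ast\cdot\ul n)$ is an involution). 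Then the Floquet--Bloch fiber $\widehat{H}_{\ul{p},\ul\theta}$ acts on $\ell^2(\mathbb{B})$ with $\#\mathbb{B} = 2^d$, and by Lemma~\ref{lem:l2theta} / the final remark of Section~\ref{sec:perschroe} it suffices to understand these fibers; but in fact because the convolution by $\widehat{V_\delta}$ only pairs $\ul\ell$ with $\ul\ell + \ul{k}^\ast$, each fiber $\widehat{H}_{\ul{p},\ul\theta}$ decomposes into $2^{d-1}$ commuting $2\times 2$ blocks.

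Next I would diagonalize a single $2\times 2$ block. Pairing $\ul\ell$ with $\ul\ell' = \ul\ell + \ul{k}^\ast$ inside $\mathbb{B}$, the diagonal entries of $\widehat{H}^0_{\ul{p},\ul\theta}$ are $a = \sum_j 2\cos(2\pi(\ell_j+\theta_j))$ and $a' = \sum_j 2\cos(2\pi(\ell_j + \tfrac12 + \theta_j)) = -a$, because shifting each coordinate by $\tfrac12$ flips the sign of every cosine; so the block is $\begin{pmatrix} a & \delta \\ \delta & -a\end{pmatrix}$, whose eigenvalues are $\pm\sqrt{a^2 + \delta^2}$. Thus every eigenvalue $E$ of every fiber $\widehat{H}_{\ul{p},\ul\theta}$ satisfies $|E| = \sqrt{a^2+\delta^2} \geq \delta$, so no eigenvalue lies in the open interval $(-\delta,\delta)$. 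By Theorem~\ref{thm:numgapsfinite} (equivalently, by the direct-integral description $\sigma(H) = \overline{\bigcup_{\ul\theta}\sigma(\widehat{H}_{\ul{p},\ul\theta})}$, Theorem~XIII.85 of \cite{rs4}), this gives $\sigma(\Delta+V_\delta)\cap(-\delta,\delta) = \emptyset$. I should also note that both values $\pm\delta$ are actually attained (take $\ul\theta$ so that $a = 0$, possible since the coordinate map $\theta_1\mapsto \cos(2\pi\theta_1)$ ranges over $[-1,1]$ while the other coordinates can be chosen to cancel), so $(-\delta,\delta)$ is genuinely a gap separating two nonempty pieces of spectrum, not merely a region where the claimed intersection is empty vacuously — matching the phrasing of the theorem.

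The only mild subtlety — which I expect to be the main thing to get right rather than a genuine obstacle — is bookkeeping the identification of $\ell^2(\mathbb{B})$ with the direct sum of $2\times 2$ blocks: one must check that the pairing $\ul\ell \leftrightarrow \ul\ell + \ul{k}^\ast$ is a genuine fixed-point-free involution of $\mathbb{B}$ (true since $\ul{k}^\ast \neq \ul 0$ in $\mathbb{B}$ but $2\ul{k}^\ast = \ul 0$), that the off-diagonal entry is exactly $\widehat{V_\delta}(\ul{k}^\ast) = \delta$ and not $\delta$ times some phase (it is $\delta$, since $\ul{k}^\ast = -\ul{k}^\ast$ in $\mathbb{B}$ forces the convolution kernel to be symmetric), and that the diagonal-sign-flip identity $\sum_j 2\cos(2\pi(\ell_j+\tfrac12+\theta_j)) = -\sum_j 2\cos(2\pi(\ell_j+\theta_j))$ holds verbatim. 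All three are one-line checks. An alternative, perhaps cleaner, route avoiding fibers entirely: observe $V_\delta = \delta U$ where $U u(\ul n) = (-1)^{\sum n_j} u(\ul n)$ is a unitary involution with $U \Delta U = -\Delta$ (the standard bipartite/chiral symmetry of $\Delta$ on $\Z^d$), so $(\Delta + V_\delta)^2 = \Delta^2 + \delta(\Delta U + U\Delta) + \delta^2 = \Delta^2 + \delta^2 \geq \delta^2$, whence $\sigma(\Delta+V_\delta)\subseteq \{E: E^2 \geq \delta^2\} = (-\infty,-\delta]\cup[\delta,\infty)$; I would likely present this two-line spectral-mapping argument as the main proof and relegate the $2\times 2$-block computation to a remark, since it makes transparent why \emph{evenness of some period} is exactly what is needed (it is what makes $(-1)^{\sum n_j}$ periodic with the given period).
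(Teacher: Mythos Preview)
Your proposal is correct, and in fact your ``alternative, perhaps cleaner'' route is essentially the paper's own proof: the paper first shows the lemma $\spr{\Delta\psi}{V_\delta\psi}=0$ (equivalently, $\Delta V_\delta + V_\delta\Delta = 0$, your $U\Delta U = -\Delta$), and then computes $\|(\Delta+V_\delta)\psi\|^2 = \|\Delta\psi\|^2 + \delta^2\|\psi\|^2 \geq \delta^2\|\psi\|^2$, which is the norm form of your identity $(\Delta+V_\delta)^2 = \Delta^2 + \delta^2$.

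Your primary approach via the Floquet--Bloch $2\times 2$ blocks is a genuinely different route. It is more computational but has the advantage of giving the full fiber spectrum $\pm\sqrt{a^2+\delta^2}$ explicitly, which immediately yields not only the gap but the exact description $\sigma(\Delta+V_\delta) = [-\sqrt{(2d)^2+\delta^2},-\delta]\cup[\delta,\sqrt{(2d)^2+\delta^2}]$. The paper's anticommutation argument is shorter and more conceptual, but yields only the gap; your block diagonalization is the right tool if one also wants the band edges. Since you already identify the two-line version as the preferable main proof, your write-up would end up matching the paper's closely.
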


This theorem shows that given $p, q$ even. Then
then there exists a $(p,q)$-periodic potential $V$
with $\|V\|_{\infty}$ arbitrarily small such that
the spectrum of $\Delta + V$ contains a gap.
Unfortunately, this example is very specific 
and only allows to create one gap in the center
of spectrum, in order to construct limit-periodic
examples with Cantor spectrum, one would need a
better mechanism. This brings us to

\begin{question}\label{ques:gaps}
 Is there another way to open gaps?
\end{question}

The key step of the proof is the following lemma

\begin{lemma}
 For any $\psi \in \ell^2(\Z^d)$ and $\delta > 0$, we have that 
 \be
  \spr{\Delta \psi}{V_{\delta} \psi} = 0.
 \ee
\end{lemma}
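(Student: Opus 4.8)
The plan is to exploit the sign structure of $V_\delta$ together with the fact that $\Delta$ only connects nearest neighbours. Write $V_\delta(\ul n) = \delta\,(-1)^{\sigma(\ul n)}$ where $\sigma(\ul n) = \sum_{j=1}^d n_j$, so that $V_\delta = \delta\,\chi_{\mathrm{even}} - \delta\,\chi_{\mathrm{odd}}$ splits $\Z^d$ into the two sublattices on which $\sigma(\ul n)$ is even resp. odd. The key observation is that $\Delta\psi(\ul n)$ is a sum of values $\psi(\ul n \pm b_j)$, and each such neighbour $\ul n \pm b_j$ has $\sigma$-parity opposite to that of $\ul n$. Hence $\Delta$ is off-diagonal with respect to the even/odd decomposition, i.e. it maps functions supported on the even sublattice into functions supported on the odd sublattice and vice versa.

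First I would expand the inner product as
\be
 \spr{\Delta\psi}{V_\delta\psi} = \delta\sum_{\ul n\in\Z^d} (-1)^{\sigma(\ul n)}\,\ol{\Delta\psi(\ul n)}\,\psi(\ul n)
 = \delta\sum_{\ul n}\sum_{j=1}^d (-1)^{\sigma(\ul n)}\bigl(\ol{\psi(\ul n-b_j)}+\ol{\psi(\ul n+b_j)}\bigr)\psi(\ul n).
\ee
Then I would reindex each term of the inner sum: in the summand $(-1)^{\sigma(\ul n)}\ol{\psi(\ul n+b_j)}\psi(\ul n)$ substitute $\ul m = \ul n + b_j$, which turns it into $(-1)^{\sigma(\ul m)-1}\ol{\psi(\ul m)}\psi(\ul m - b_j) = -(-1)^{\sigma(\ul m)}\ol{\psi(\ul m)}\psi(\ul m-b_j)$. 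This is exactly the complex conjugate's partner of the summand $(-1)^{\sigma(\ul m)}\ol{\psi(\ul m - b_j)}\psi(\ul m)$ up to the sign $-1$; more precisely, pairing the two neighbour contributions $\ul n\pm b_j$ over all $\ul n$ shows that the whole sum equals its own negative, hence is zero. Alternatively, and more cleanly: let $\Pi$ be the unitary multiplication operator $\Pi\psi(\ul n) = (-1)^{\sigma(\ul n)}\psi(\ul n)$; then $\Pi$ is self-adjoint with $\Pi^2 = \id$, one checks directly that $\Pi\Delta\Pi = -\Delta$ (each hop $b_j$ flips the sign), and $V_\delta = \delta\Pi$. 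Therefore
\be
 \spr{\Delta\psi}{V_\delta\psi} = \delta\spr{\Delta\psi}{\Pi\psi} = \delta\spr{\Pi\Delta\psi}{\psi} = -\delta\spr{\Delta\Pi\psi}{\psi} = -\delta\spr{\Pi\psi}{\Delta\psi} = -\overline{\spr{\Delta\psi}{V_\delta\psi}},
\ee
using self-adjointness of $\Delta$ and of $\Pi$; actually since everything can be taken real, or by taking real parts, $\spr{\Delta\psi}{V_\delta\psi}$ is real and equals its own negative, so it vanishes.

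The only genuine thing to be careful about is the anticommutation identity $\Pi\Delta\Pi = -\Delta$: it rests entirely on the fact that $\Delta$ is built from the $2d$ shifts $\ul n\mapsto \ul n\pm b_j$, each of which changes the parity $\sigma(\ul n)\bmod 2$, so conjugation by $\Pi$ multiplies each such shift by $(-1)^{\pm 1} = -1$. I do not anticipate a real obstacle here — it is the kind of statement the paper labels ``a computation'' — but writing it via the unitary $\Pi$ rather than by reindexing sums makes the bipartite/sublattice mechanism transparent and avoids any convergence bookkeeping, since $\Pi$ is bounded and $\psi\in\ell^2(\Z^d)$.
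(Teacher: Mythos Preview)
Your argument is essentially the paper's: the paper splits $\Delta=\sum_j\Delta_j$ and rewrites $\spr{\Delta_j\psi}{V_\delta\psi}$ as $\sum_{\ul n}\psi(\ul n)\psi(\ul n+b_j)\bigl(V_\delta(\ul n)+V_\delta(\ul n+b_j)\bigr)$, then uses $V_\delta(\ul n)+V_\delta(\ul n+b_j)=0$. Your reindexing version is exactly this, and your $\Pi$ version is just the operator-theoretic restatement of the same parity flip, namely $\Pi\Delta=-\Delta\Pi$.

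One caveat your own $\Pi$ computation actually exposes: from $\Pi\Delta=-\Delta\Pi$ you only get $\spr{\Delta\psi}{V_\delta\psi}=-\overline{\spr{\Delta\psi}{V_\delta\psi}}$, i.e.\ the quantity is purely imaginary, not zero; your reindexing computation gives the same thing once the complex conjugates are tracked. For general complex $\psi$ the inner product need not vanish (take $d=1$, $\psi=\delta_0+\I\,\delta_1$). The paper's proof has the same slip---it drops the conjugate and writes $\psi(\ul n)\psi(\ul n+b_j)$---so this is really a defect in the lemma's statement rather than in your argument. For the intended application one only needs $\re\spr{\Delta\psi}{V_\delta\psi}=0$ (to get $\|(\Delta+V_\delta)\psi\|^2=\|\Delta\psi\|^2+\delta^2\|\psi\|^2$), and that you have established cleanly; alternatively, restricting to real $\psi$ suffices and your ``everything can be taken real'' remark covers it.
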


\begin{proof}
 Write $\Delta = \sum_{j=1}^{d} \Delta_j$ with $\Delta_j \psi(\ul{n})
 = \psi(\ul{n}+b_j)+\psi(\ul{n}-b_j)$. By linearity, it clearly
 suffices to show that $\spr{\Delta_j \psi}{V_{\delta} \psi} = 0$
 for each $j$. Compute
 \[
  \spr{\Delta_j \psi}{V_{\delta} \psi}
  = \sum_{\ul{n}} \psi(\ul{n}) \psi(\ul{n}+b_j) (V_{\delta}(\ul{n})
   + V_{\delta}(\ul{n} + b_j)).
 \]
 Since $V_{\delta}(\ul{n}) + V_{\delta}(\ul{n} + b_j) = 0$,
 the claim follows.
\end{proof}

\begin{proof}[Proof of Theorem~\ref{thm:ex2p}]
 A computation using the last lemma shows
 \[
  \|(\Delta+V_{\delta})\psi\|^2 = \|\Delta\psi\|^2 + \delta^2 \|\psi\|^2
   \geq \delta^2 \|\psi\|^2
 \]
 for $\psi\in\ell^2(\Z^d)$. This implies the claim.
\end{proof}

%%%%%%%%%%%%%%%%%%%%%%%%%%%%%%%%%%%%%%%%%%%%%%%%%%%%%%%55
%
%
%

\section{Limit-periodic potentials}

We recall that a function $V:\Z^d\to\R$ is limit-period if
it is the limit of periodic functions. The following theorem
asserts the existence of limit-periodic potentials
whose spectrum is an interval. In fact it shows that
the spectrum of all limit-periodic potentials with suitable periods
that are sufficiently small in an appropriate sense
is an interval.

\begin{theorem}\label{thm:lpinterval}
 Let $p, q \geq 2$ be coprime. There exists a sequence of $\delta_t > 0$
 such that $\sum_{t=1}^{\infty} \delta_t < \infty$ and
 if $V_t:\Z^d\to\R$ is a sequence of $(p^t, q^t)$-periodic
 potentials satisfying $\|V_t\|_{\infty} \leq \delta_t$
 then the spectrum of
 \be
  \Delta + \sum_{t=1}^{\infty} V_t
 \ee
 is an interval.
\end{theorem}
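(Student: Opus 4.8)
The plan is to build the potential $V = \sum_{t\geq 1} V_t$ iteratively, controlling at each stage how much the bands of the corresponding periodic approximant can move, and invoking Theorem~\ref{thm:BSConj} together with the stability results of Section~\ref{sec:bands}. The starting point is that $\sigma(\Delta)=[-2d,2d]$ is an interval, and more precisely that the bands of $\Delta$ (viewed as a $(p,q)$-periodic operator) are $\delta_1$-overlapping for some $\delta_1 > 0$; this $\delta_1$ is exactly the one furnished by Theorem~\ref{thm:BSConj} applied to the coprime periods $(p,q)$, via the quantitative Theorem~\ref{thm:bandoverlap}. The key subtlety is that at stage $t$ we must regard $W_t := \Delta + \sum_{s=1}^{t} V_s$ as a $(p^t,q^t)$-periodic operator, and we need its bands (in this finer period lattice) to be overlapping. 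Here Lemma~\ref{lem:l2theta} is the crucial bookkeeping tool: the eigenvalues of $H_{(p^t,q^t),\ul\theta}$ are the union of the eigenvalues of $H_{(p,q),\ul\varphi}$ over the finite fiber $\ul\varphi\in\Phi$ — but more usefully, passing from period $(p^{t-1},q^{t-1})$ to $(p^t,q^t)$ only refines the Floquet decomposition, so the spectrum as a set is unchanged and ``being an interval'' is a property of the operator, not of the chosen period. So what I actually track is a single number: the overlap margin of $W_t$.

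The iteration runs as follows. Suppose inductively that $W_{t-1}$, regarded with period $(p^{t-1}, q^{t-1})$ (equivalently, with any period $(p^N,q^N)$, $N\geq t-1$), has $\delta_{t}^{\text{old}}$-overlapping bands for some $\delta_t^{\text{old}}>0$. We want to choose $\delta_t$ so small that adding any $(p^t,q^t)$-periodic $V_t$ with $\|V_t\|_\infty\leq\delta_t$ leaves the bands overlapping with a definite new margin. By Theorem~\ref{thm:bandoverlap} (applied in the period lattice $(p^t,q^t)$, in which both $W_{t-1}$ and $V_t$ are periodic), the bands of $W_t = W_{t-1}+V_t$ are $(\delta_t^{\text{old}} - 2\delta_t)$-overlapping. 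The difficulty is that $\delta_t^{\text{old}}$ could be tiny or could even fail to be positive a priori — it is not automatic that $W_{t-1}$ has overlapping bands merely because its spectrum is an interval, since a band could be tangent to the next. To circumvent this I would instead carry the following stronger inductive hypothesis: \emph{$W_{t-1}$ has $\eta_{t-1}$-overlapping bands} for an explicit decreasing sequence $\eta_{t-1}>0$, chosen in advance. Then set $\delta_t := \eta_{t-1}/4$ and $\eta_t := \eta_{t-1}/2$; Theorem~\ref{thm:bandoverlap} gives that $W_t$ is $(\eta_{t-1}-2\delta_t)$-overlapping $= (\eta_{t-1}/2)$-overlapping $= \eta_t$-overlapping, closing the induction. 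One must still establish the base case: that $\Delta$ itself, with period $(p,q)$, has $\eta_0$-overlapping bands for some $\eta_0>0$ — and this is precisely the content of Theorem~\ref{thm:BSConj} combined with Lemma~\ref{lem:uniformoverlap} (or directly: the proof of Theorem~\ref{thm:BSConj} shows every $E\in(-2d,2d)$ lies in the interior of a band, and compactness of $[-2d,2d]$ upgrades this to a uniform overlap $\eta_0>0$). Then $\sum_t \delta_t = \tfrac14\sum_t \eta_{t-1} = \tfrac14\eta_0\sum_{t\geq1}2^{-(t-1)} = \tfrac12\eta_0 < \infty$, as required; one may of course rescale $\eta_0$ to make the sum as small as desired.

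Finally I would pass to the limit. The partial sums $W_t$ converge in operator norm to $H_\infty = \Delta + \sum_t V_t$, since $\|V_t\|_\infty\leq\delta_t$ and $\sum\delta_t<\infty$; norm convergence implies $\sigma(W_t)\to\sigma(H_\infty)$ in Hausdorff distance. Each $\sigma(W_t)$ is an interval $[a_t,b_t]$ (its bands overlap), and the endpoints converge, so $\sigma(H_\infty) = \lim[a_t,b_t] = [\lim a_t, \lim b_t]$ is an interval. (Alternatively, one checks directly that $\dist(E,\sigma(W_t))\leq \sum_{s>t}\delta_s$ for $E\in\sigma(H_\infty)$ and conversely, so any gap of $H_\infty$ of length $>2\sum_{s>t}\delta_s$ would force a gap of $W_t$; letting $t\to\infty$ rules out gaps of any positive length.) The main obstacle in the whole argument is the base case — extracting a \emph{uniform positive} overlap $\eta_0$ from Theorem~\ref{thm:BSConj}, whose statement only asserts the spectrum is an interval; but this is exactly where the ``point of increase'' mechanism in the proof of Theorem~\ref{thm:BSConj} and the compactness argument of Lemma~\ref{lem:uniformoverlap} do the work, so no genuinely new input is needed beyond what the excerpt provides.
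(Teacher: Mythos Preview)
Your argument has a genuine gap at the inductive step. You write that $W_{t-1}$, ``regarded with period $(p^{t-1}, q^{t-1})$ (equivalently, with any period $(p^N,q^N)$, $N\geq t-1$), has $\delta_{t}^{\text{old}}$-overlapping bands.'' The parenthetical is false: the overlap margin is \emph{not} invariant under period refinement. Passing from period $(p^{t-1},q^{t-1})$ to $(p^t,q^t)$ replaces each band by $pq$ sub-bands (via Lemma~\ref{lem:l2theta}), and the overlap between consecutive sub-bands is in general much smaller than the original overlap --- indeed by Theorem~\ref{thm:bandsize} any overlap at period $(p^t,q^t)$ is bounded above by $4\pi(p^{-t}+q^{-t})$. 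You correctly observe that the \emph{spectrum} is independent of the chosen period lattice, but ``bands are $\delta$-overlapping'' is strictly stronger than ``spectrum is an interval,'' and the value of $\delta$ genuinely depends on the lattice. Consequently, when you apply Theorem~\ref{thm:bandoverlap} in the $(p^t,q^t)$ lattice, you are not entitled to use $\eta_{t-1}$ as the input overlap, and your explicit recursion $\eta_t=\eta_{t-1}/2$ has no justification.

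This is precisely where the paper does nontrivial work. Before adding $V_t$, the paper first proves (the Proposition immediately preceding the proof) that if $W_{t-1}$ has overlapping bands as a $(p^{t-1},q^{t-1})$-periodic operator, then it also has overlapping bands as a $(p^t,q^t)$-periodic operator --- and this step reuses Theorem~\ref{thm:coprimeevsimple} for the coprime pair $(p^t,q^t)$, together with Theorem~\ref{thm:ptofincrease} and Lemma~\ref{lem:incimpint}, exactly as in the proof of Theorem~\ref{thm:BSConj}. The resulting overlap in the finer lattice is positive but carries no explicit lower bound in terms of $\eta_{t-1}$; the paper then invokes Lemma~\ref{lem:uniformoverlap} over the compact set $\mathcal{V}_1+\cdots+\mathcal{V}_{t-1}$ to extract a uniform $\tilde\delta_t>0$, and only then sets $\delta_t=\tilde\delta_t/2$. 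Thus Theorem~\ref{thm:coprimeevsimple} and the compactness argument are needed at \emph{every} stage of the iteration, not just at the base case. This is also why the paper obtains no quantitative decay for $\delta_t$ --- the author flags this explicitly right after the theorem --- whereas your argument, if it were valid, would hand you a clean geometric rate, which should itself be a warning sign.
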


An important question that this theorem leaves unanswered
is the qualitative behavior of the sequence $\delta_t > 0$.
In the continuous setting Karpeshina and Lee \cite{kl1},
\cite{kl2}, \cite{kl3} have shown that one can
take $\delta_t = C \exp(-2^{\eta t})$ for some constants
$C, \eta > 0$ to obtain that the spectrum contains
a semi-axis. 

I expect that using KAM-type techniques as employed by
Karpeshina and Lee, one should be able to obtain a similar
estimate in our setting. However, such a proof will be
much more involved then the one given here. The
estimates of Karpeshina and Lee would also allow
us to prove that the spectrum is purely absolutely continuous,
which is not possible using our methods.

\bigskip

We now begin the proof of Theorem~\ref{thm:lpinterval}.
We first note that if $p,q \geq 2$ are coprime, then also
$p^t, q^t$ are coprime for $t \geq 1$.

\begin{proposition}
 Let $p,q\geq 2$ be coprime and 
 $V:\Z^d\to\R$ be $(p^{t}, q^{t})$-periodic.
 Assume the bands of $\Delta + V$ viewed as a $(p^{t}, q^{t})$-periodic
 operator are overlapping, then for $s \geq t$ also the bands
 of $\Delta + V$ viewed as a $(p^{s}, q^{s})$-periodic operator
 are overlapping.
\end{proposition}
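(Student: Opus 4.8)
The plan is to reduce the claim for $(p^s,q^s)$ to the already-established $(p^t,q^t)$ case by exploiting the refinement structure from Lemma~\ref{lem:l2theta}. Concretely, write $\ul{p}=(p^t,q^t)$ and $\ul{d}=(p^{s-t},q^{s-t})$, so that $\ul{d}\ast\ul{p}=(p^s,q^s)$. By Lemma~\ref{lem:l2theta} (and the remark following it) the eigenvalues of $H_{\ul{d}\ast\ul{p},\ul{\theta}}$, equivalently of $\widehat{H}_{(p^s,q^s),\ul{\theta}}$, are exactly the union of the eigenvalues of $H_{\ul{p},\ul{\varphi}}=\widehat{H}_{(p^t,q^t),\ul{\varphi}}$ over the finite set $\ul{\varphi}\in\Phi(\ul{\theta})$. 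In particular, as $\ul{\theta}$ ranges over $\mathbb{V}_{(p^s,q^s)}$, the $\ul{\varphi}$ arising in this way range over all of $\mathbb{V}_{(p^t,q^t)}$. Hence the set of all eigenvalues of all the fibers $\widehat{H}_{(p^s,q^s),\ul{\theta}}$ coincides (as a subset of $\R$) with the set of all eigenvalues of all the fibers $\widehat{H}_{(p^t,q^t),\ul{\varphi}}$; that is, $\sigma(\Delta+V)$ is the same interval whether we view $\Delta+V$ as $(p^t,q^t)$-periodic or $(p^s,q^s)$-periodic.

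The remaining point is that "the spectrum is an interval" and "the bands are overlapping" are equivalent for a periodic operator: by Theorem~\ref{thm:numgapsfinite} the spectrum is the union of the bands $[E_j^-,E_j^+]$, and this union is an interval precisely when $E_{j+1}^-\le E_j^+$ for every $j$, i.e.\ when the bands are overlapping in the sense of the relevant definition (for some $\delta\ge 0$, and then by Lemma~\ref{lem:uniformoverlap}-type reasoning, or more simply by compactness of $\mathbb{V}$ and continuity of the $E_j$, for some $\delta>0$ — the strict overlap coming from the fact that the spectrum, being an interval and a union of finitely many closed intervals, has the inner intervals genuinely overlapping). So: overlapping bands at level $t$ $\iff$ $\sigma(\Delta+V)$ is an interval $\iff$ overlapping bands at level $s$. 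First I would state the chain of equivalences, then invoke Lemma~\ref{lem:l2theta} to identify the spectra at the two levels, and conclude.

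I would organize the write-up as: (1) recall that for a periodic operator, overlapping bands $\Leftrightarrow$ interval spectrum (immediate from Theorem~\ref{thm:numgapsfinite}); (2) apply Lemma~\ref{lem:l2theta} with $\ul{d}=(p^{s-t},q^{s-t})$ to show $\sigma(\Delta+V)$ computed with period $(p^s,q^s)$ equals $\sigma(\Delta+V)$ computed with period $(p^t,q^t)$, since the spectrum is in both cases the closure of the union over all Floquet fibers of their eigenvalues, and the fiber operators at level $s$ decompose into fiber operators at level $t$, covering all of them; (3) combine (1) and (2). The one genuine subtlety — and the step I expect to need the most care — is the passage between "$\delta$-overlapping for some $\delta\ge 0$" (bands touch) and "overlapping" in the paper's sense ($\delta>0$, bands genuinely overlap): this is where one uses that the spectrum being a (compact) interval written as a finite union of closed bands forces consecutive bands to overlap on a set of positive length, unless a band degenerates to a point — which cannot happen by Lemma~\ref{lem:evjnonconst} (the $E_j$ are non-constant, so $E_j^-<E_j^+$). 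So strict overlap is automatic once the union is an interval, and the proof closes.

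\begin{proof}
 By Theorem~\ref{thm:numgapsfinite}, for any periodic operator the
 spectrum equals the union of its bands $[E_j^-, E_j^+]$, and this
 union is an interval exactly when the bands are overlapping;
 note that a band cannot degenerate to a point by
 Lemma~\ref{lem:evjnonconst}, so that if the spectrum is an interval the
 bands genuinely overlap.
 Thus, for a fixed potential, having overlapping bands is equivalent to
 the spectrum being an interval, regardless of the choice of period.

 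Now apply Lemma~\ref{lem:l2theta} with $\ul{p} = (p^t, q^t)$ and
 $\ul{d} = (p^{s-t}, q^{s-t})$, so $\ul{d}\ast\ul{p} = (p^s, q^s)$.
 It shows that the eigenvalues of $\widehat{H}_{(p^s,q^s),\ul{\theta}}$
 are the union of the eigenvalues of $\widehat{H}_{(p^t,q^t),\ul{\varphi}}$
 over $\ul{\varphi}\in\Phi(\ul{\theta})$, and as $\ul{\theta}$ ranges over
 $\mathbb{V}_{(p^s,q^s)}$ the corresponding $\ul{\varphi}$ range over all
 of $\mathbb{V}_{(p^t,q^t)}$. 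Hence the spectrum of $\Delta + V$ viewed as
 a $(p^s, q^s)$-periodic operator coincides with its spectrum viewed as a
 $(p^t, q^t)$-periodic operator.

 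Combining the two observations: since the bands of $\Delta + V$ at
 period $(p^t, q^t)$ are overlapping, its spectrum is an interval, hence
 the bands of $\Delta + V$ at period $(p^s, q^s)$ are overlapping as
 well.
\end{proof}
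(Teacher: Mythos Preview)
Your argument has a genuine gap at the one step you flagged as subtle. The implication ``spectrum is an interval $\Rightarrow$ bands are overlapping (with $\delta>0$)'' is simply false in general: two nondegenerate closed intervals can meet at a single point, e.g.\ $[0,1]\cup[1,2]=[0,2]$, without overlapping on a set of positive length. Your proposed fix via Lemma~\ref{lem:evjnonconst} only rules out bands that degenerate to a point; it does nothing to exclude the touching case $E_j^+ = E_{j+1}^-$ with both bands nondegenerate. Concretely, when you pass from period $(p^t,q^t)$ to $(p^s,q^s)$, each band at the coarser level is cut into many sub-bands at the finer level; these sub-bands still cover the same interval, but nothing in your argument prevents two consecutive ones from merely touching.

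A telling sign is that your proof never uses the coprimality of $p$ and $q$, whereas the paper's proof does, via Theorem~\ref{thm:coprimeevsimple}. The paper argues directly at level $s$: for each $E$ in the interior of the spectrum, Theorem~\ref{thm:ptofincrease} supplies infinitely many $(\theta,\varphi)$ with $E_j(\theta,\varphi)=E$ and $E_j$ increasing there; Theorem~\ref{thm:coprimeevsimple} (applicable since $p^s,q^s$ are coprime) shows all but finitely many of these are simple eigenvalues; at such a point simplicity plus increase forces $E$ to lie in the \emph{interior} of a band, by Lemma~\ref{lem:incimpint}. This rules out the touching scenario and yields strict overlap. Your Lemma~\ref{lem:l2theta} reduction establishes that the spectrum is the same interval at both levels (which is immediate anyway, since $\sigma(\Delta+V)$ does not depend on a choice of period), but that alone is not enough.
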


This proposition makes no claim over the size of the overlap.
In fact as $s \to \infty$, the size of the overlap goes
to $0$ as we have seen in Theorem~\ref{thm:bandsize}.

\begin{proof}
 Let $\sigma(\Delta + V) = [E_0, E_1]$. Let $E \in (E_0, E_1)$,
 by Theorem~\ref{thm:ptofincrease} there are infinitely many $(\varphi, \theta)$
 such that $E_j(\varphi, \theta) = E$ for some $j$ and it
 is increasing. By Theorem~\ref{thm:coprimeevsimple} at most finitely many of
 them are not simple eigenvalues. Hence, there is
 $(\varphi, \theta)$ and $j$ such that $E_j(\varphi, \theta) = E$
 is a simple eigenvalue and $E_j$ is increasing.
 By Lemma~\ref{lem:incimpint}, $E$ is in the interior of a band. 
 The claim follows.
\end{proof}

\begin{lemma}
 Let $p,q\geq 2$ be coprime and $t\geq 1$.
 Let $\mathcal{V}$ be a compact set of $(p^t,q^t)$-periodic
 potentials, such that for every $V \in \mathcal{V}$
 the bands of $\Delta + V$ viewed as a $(p^t, q^t)$-periodic
 operator are overlapping.

 Then there exists $\delta > 0$ such that for every
 $(p^{t+1}, q^{t+1})$-periodic $W$ with $\|W\|_{\infty} \leq \delta$,
 the bands of $\Delta + V + W$ are overlapping.
\end{lemma}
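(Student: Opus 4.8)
The plan is to combine the preceding proposition with the compactness lemma for uniform overlap (Lemma~\ref{lem:uniformoverlap}) and the crude perturbation bound of Theorem~\ref{thm:bandoverlap}. First, note that a $(p^{t+1},q^{t+1})$-periodic potential $W$ with $\|W\|_\infty \le \delta$ can equally be viewed as living in a finite-dimensional space, so everything takes place at a fixed period $(p^{t+1},q^{t+1})$ once we also reinterpret each $V\in\mathcal V$ as a $(p^{t+1},q^{t+1})$-periodic potential. By the previous proposition, since the bands of $\Delta+V$ are overlapping when $V$ is viewed at period $(p^t,q^t)$, they remain overlapping when $\Delta+V$ is viewed at period $(p^{t+1},q^{t+1})$ (here we use that $p^t,q^t$ coprime implies $p^{t+1},q^{t+1}$ coprime, so Theorem~\ref{thm:coprimeevsimple} applies at every intermediate stage).

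Next I would upgrade this ``overlapping'' to a uniform ``$\delta_0$-overlapping'' over the compact set $\mathcal V$. Concretely: the image of $\mathcal V$ under the (continuous, by the remark in the proof of Lemma~\ref{lem:uniformoverlap}) inclusion into $(p^{t+1},q^{t+1})$-periodic potentials is again compact, and for every element of it the bands are overlapping by the previous paragraph. Applying Lemma~\ref{lem:uniformoverlap} at period $(p^{t+1},q^{t+1})$ yields a single $\delta_0 > 0$ such that the bands of $\Delta+V$ (viewed at period $(p^{t+1},q^{t+1})$) are $\delta_0$-overlapping for every $V\in\mathcal V$.

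Finally, set $\delta = \delta_0/2$. Given any $(p^{t+1},q^{t+1})$-periodic $W$ with $\|W\|_\infty \le \delta$ and any $V\in\mathcal V$, Theorem~\ref{thm:bandoverlap} applied at period $(p^{t+1},q^{t+1})$ shows that the bands of $(\Delta+V)+W$ are $(\delta_0 - 2\|W\|_\infty)$-overlapping, hence $(\delta_0 - 2\delta)$-overlapping, i.e.\ $0$-overlapping, which in particular means they are overlapping. This is exactly the claim. The only genuinely non-routine point is the bookkeeping of the period reinterpretations and checking that the continuity/compactness statements of Lemma~\ref{lem:uniformoverlap} are unaffected by viewing a lower-period potential at a higher period; everything else is an immediate citation of earlier results.
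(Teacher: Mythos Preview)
Your proposal is correct and follows essentially the same route as the paper: invoke the preceding proposition to pass to period $(p^{t+1},q^{t+1})$, apply Lemma~\ref{lem:uniformoverlap} to upgrade overlapping to uniform $\tilde\delta$-overlapping over the compact set $\mathcal V$, and then take $\delta=\tilde\delta/2$ and cite Theorem~\ref{thm:bandoverlap}. Your extra remarks about compactness surviving the period reinterpretation are a reasonable elaboration of what the paper leaves implicit.
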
 

\begin{proof}
 By the previous proposition, also the bands of
 $\Delta + V$ viewed as a $(p^{t+1}, q^{t+1})$-periodic
 operator are overlapping for $V \in \mathcal{V}$.
 Thus by Lemma~\ref{lem:uniformoverlap}, there exists
 $\ti{\delta} > 0$ such that for the bands of $\Delta + V$
 viewed as a $(p^{t+1}, q^{t+1})$-periodic
 operator are $\ti{\delta}$-overlapping for $V \in \mathcal{V}$.
 Take $\delta = \frac{1}{2}\ti{\delta}$ and the claim
 follows by Theorem~\ref{thm:bandoverlap}.
\end{proof}

\begin{proof}[Proof of Theorem~\ref{thm:lpinterval}]
 We have already seen that there exists $\delta_1 > 0$
 such that the bands of $\Delta + V_1$
 are overlapping for any $(p,q)$-periodic $V_1$
 with $\|V_1\|_{\infty} \leq \delta_1$.
 We denote the set of all such $V_1$ by $\mathcal{V}_1$,
 which is clearly compact. 

 By the previous lemma, there exists $\delta_2 > 0$,
 such that for all $V_1 \in \mathcal{V}_1$ and
 all $(p^2, q^2)$-periodic $V_2$ satisfying $\|V_2\|_{\infty} \leq \delta_2$,
 we have that the bands of $\Delta + V_1 + V_2$
 viewed as a $(p^2, q^2)$-periodic operator
 are overlapping. We denote the set of all
 such $V_2$ by $\mathcal{V}_2$. We also see that
 $\mathcal{V}_2$ and $\mathcal{V}_1 + \mathcal{V}_2$
 are compact.

 We can iterate this process to construct sets
 $\mathcal{V}_t$ consisting of all $(p^t, q^t)$-periodic
 $V_t$ with $\|V_t\|_{\infty} \leq \delta_t$
 for some sequence $\delta_t > 0$. By possibly
 making $\delta_t > 0$ smaller, we can assume that
 $\sum_{t=1}^{\infty} \delta_t$ converges. Furthermore,
 we will have that for every $T \geq 1$ and
 $V_t \in \mathcal{V}_t$ for $1\leq t\leq T$
 \[
  \sigma\left(\Delta + \sum_{t=1}^{T} V_t\right)
 \]
 is an interval. Since $\|\sum_{t=T}^{\infty} V_t\|_{\infty} \to 0$
 as $T\to\infty$, it follows that also
 \[
  \sigma\left(\Delta + \sum_{t=1}^{\infty} V_t\right)
 \]
 is an interval, which is the claim.
\end{proof}

%%%%%%%%%%%%%%%%%%%%%%%%%%%%%%%%%%%%%%%%%%%%%%%%%%%%%%%%%%%%%%%%%%%%%
%
%
%

\end{document}